\numberwithin{equation}{section}
\definecolor{Arancio}{cmyk}{0,0.61,0.87,0}
\definecolor{blus}{RGB}{0,102,204}
\newcommand{\brd}[1]{\mathbb{#1}}
\newcommand{\R}{\brd{R}}
\newcommand{\C}{\brd{C}}
\newcommand{\N}{\brd{N}}
\newcommand{\loc}{{{\tiny{\mbox{loc}}}}}
\newcommand{\gio}[1]{{\color{black}#1}}
\newtheorem{teo}{Theorem}[section]
\newtheorem{Corollary}[teo]{Corollary}
\newtheorem{Lemma}[teo]{Lemma}
\newtheorem{Theorem}[teo]{Theorem}
\newtheorem{Proposition}[teo]{Proposition}
\theoremstyle{definition}
\newtheorem{Definition}[teo]{Definition}
\newtheorem{remark}[teo]{Remark}
\title[Higher order boundary Harnack principle via degenerate equations]
{Higher order boundary Harnack principle\\
via degenerate equations}
\date{\today}
\author{Susanna Terracini, Giorgio Tortone and Stefano Vita}
\address[Susanna Terracini]{Dipartimento di Matematica Giuseppe Peano
\newline\indent
Universit\`a degli Studi di Torino
\newline\indent
Via Carlo Alberto 10, 10124, Torino, Italy}
\email{susanna.terracini@unito.it}
\address[Giorgio Tortone]{Dipartimento di Matematica
\newline\indent
Universit\`a di Pisa
\newline\indent
Largo Bruno Pontecorvo 5, 56127, Pisa, Italy}
\email{giorgio.tortone@dm.unipi.it}
\address[Stefano Vita]{Dipartimento di Matematica Giuseppe Peano
\newline\indent
Universit\`a degli Studi di Torino
\newline\indent
Via Carlo Alberto 10, 10124, Torino, Italy}
\email{stefano.vita@unito.it}
\thanks{{\it 2020 Mathematics Subject Classification:}
35B45, 
35B65, 
35B53, 
35J70, 
35J75. 
\\
  \indent {\it Keywords:} Schauder estimates; boundary regularity; higher order boundary Harnack principle; singular/degenerate equations, ratios of solutions.
}
\thanks{{\it Acknowledgment:} G.T. is supported by the European Research Council's (ERC) project n.853404 ERC VaReg - {\it Variational approach to the regularity of the free boundaries}, funded by the program Horizon 2020. The authors are research fellow of Istituto Nazionale di Alta Matematica INDAM group GNAMPA}
\begin{document}
\maketitle

\vspace{-1cm}
\begin{abstract}
As a first result we prove higher order Schauder estimates for solutions to singular/degenerate elliptic equations of type:
\begin{equation*}\label{evenrho}
-\mathrm{div}\left(\rho^aA\nabla w\right)=\rho^af+\mathrm{div}\left(\rho^aF\right) \quad\textrm{in}\; \Omega
\end{equation*}
for exponents $a>-1$, where the weight $\rho$ vanishes with non zero gradient on a regular hypersurface $\Gamma$, which can be either a part of the boundary of $\Omega$ or mostly contained  in its interior. As an application, we extend such estimates to the ratio $v/u$ of two solutions to a second order elliptic equation in divergence form when the zero set of $v$ includes the zero set of $u$ which is not singular in the domain (in this case $\rho=u$, $a=2$ and $w=v/u$).
We prove first $C^{k,\alpha}$-regularity of the ratio from one side of the regular part of the nodal set of $u$ in the spirit of the higher order boundary Harnack principle in \cite{DesSav}. Then, by a gluing Lemma, the estimates extend across the regular part of the nodal set.
Finally, using conformal mapping in dimension $n=2$, we provide local gradient estimates for the ratio which hold also across the singular set.

\end{abstract}


\section{Introduction and main results}
Let us consider a regular hypersurface $\Gamma$ embedded in $\R^n$ with $n\geq2$ and  a weight $\rho$ vanishing on it with non zero gradient. At first, this paper deals with higher order local Schauder estimates up to $\Gamma$ for solutions to singular/degenerate elliptic equations of type
\begin{equation}\label{evenrho}
-\mathrm{div}\left(\rho^aA\nabla w\right)=\rho^af+\mathrm{div}\left(\rho^aF\right)
\end{equation}
in a bounded domain $\Omega$ and with the exponent $a>-1$.  The hypersurface $\Gamma$ can be either contained in the boundary  $\partial \Omega$ or in the interior of $\Omega$. In the first case, we shall use the notation $\Omega^+$ to emphasise that $\Omega$ lies on one side of $\Gamma$. In any case, solutions have to be understood in the energy sense, as elements of the weighted Sobolev space $H^{1,a}(\Omega)=H^1(\Omega,\rho^adz)$ which satisfy
	
\begin{equation*}
\int_{\Omega}\rho^aA\nabla w\cdot\nabla\phi=\int_{\Omega}\rho^a(f\phi-F\cdot\nabla\phi),
\end{equation*}
for any test function belonging to $H^{1,a}(\Omega)$. In other words, solutions are critical points of the functional

\begin{equation*}
\int_{\Omega}\frac 12\rho^aA\nabla w\cdot\nabla w- \int_{\Omega}\rho^a(fw-F\cdot\nabla w),
\end{equation*}
which is well defined in the energy space under suitable assumptions on the right hand sides $f,F$. Note that elements of the energy space have a trace on $\Gamma$ only when $a\in(-1,1)$ (the case of $A_2$-\emph{Muckenhoupt weights}) whereas in the \emph{superdegenerate case $a\geq 1$} the space $C^{\infty}_c(\overline\Omega\setminus\Gamma)$ is dense in $H^{1,a}(\Omega)$, so traces on $\Gamma$ are meaningless. Formally, if it exists, the limit satisfies

\begin{equation}\label{evenBC0}
\lim_{\rho(z)\to0^+}\rho^a(A\nabla w+F)\cdot\nu=0,
\end{equation}
where $\nu$ is the outward unit normal vector on $\Gamma$. Thus we are associating with the equation its natural boundary condition at $\Gamma$. It is worthwhile noticing that, when $a\geq 1$ and $\Gamma$ lies inside the domain, solutions can be discontinuous at $\Gamma$ (see \gio{\cite[Example 1.4]{SirTerVit1}}).\\
\gio{In the $A_2$-Muckenhoupt case $a\in (-1,1)$, such equations have been extensively studied in the literature due to their implications for the regularity theory of fractional problems. Indeed, the relationship between degenerate operators and fractional problems has been thoroughly explored
since the work of Molchanov-Ostrovskii \cite{MO}, where the connection between L\'evy stable
processes (fractional processes) and Bessel processes (degenerate processes) was examined from a
probabilistic perspective. Subsequently, Caffarelli-Silvestre \cite{CafSil1} introduced an alternative analytical
methodology for investigating local properties of fractional equations as traces of elliptic PDEs
with degenerate coefficients. Furthermore, Chang-Gonz\'alez \cite{ChaGon} highlighted the close relationship
between the degenerate equation introduced in \cite{CafSil1} and a specific class of operators in conformal
geometry, highlighting the profound connection between scattering theory on conformally compact Einstein manifolds and conformally invariant objects on their boundaries \cite{GraZwo}. For a further
geometric interpretation of the equations studied in our work, we refer to the theory of elliptic
operators with edge degeneracies developed in \cite{Maz,MazVer}. On the other hand, Our most powerful
motivation for the study of such equations lies in application to the analysis of the ratio of solutions
to elliptic PDEs. Indeed, let $u, v$ be two solutions of the elliptic equations of type
\begin{equation*}\label{uv}
-\mathrm{div}\left(A\nabla u\right)=g\;, \quad-\mathrm{div}\left(A\nabla v\right)=f \quad\textrm{in}\; \Omega^+
\end{equation*}
such that $u\equiv v\equiv0$ on $\Gamma\subset\partial\Omega^+$. Then, one easily sees that the ratio $w:=v/u$  satisfies equation
\eqref{evenrho} with $\rho=u$ and $a=2$, for a suitable right hand side depending on $u, v, f, g$. We stress that the Schauder regularity of the ratio $v/u$ is usually referred as \emph{higher order boundary Harnack principle} and has been studied with a different approach in \cite{DesSav,BanGar,Kuk}. Finally, we would like to refer also to some recent interesting works on operators which are degenerate or singular on higher
co-dimensional sets, see \cite{DavFenMay,DavMay1}.
}

\subsection*{One-sided Schauder estimates up to the characteristic manifold $\Gamma$}

Let  us consider the upper side of a regular hypersurface $\Gamma$ embedded in $\R^n$, and localize the problem on a ball centered in $0$ which lies on $\Gamma$. Thus
\begin{equation}\label{localizeGamma}
\Omega_\varphi^+\cap B_1=\{y>\varphi(x)\}\cap B_1,\qquad \Gamma\cap B_1=\{y=\varphi(x)\}\cap B_1,
\end{equation}
with $z=(x,y)\in\R^{n-1}\times\R$, $\varphi(0)=0$ and $\nabla_x\varphi(0)=0$. In other words, we are locally describing the upper side of the manifold as the epigraph of a function $\varphi$ defined in $B_1'=B_1\cap\{y=0\}$ and the manifold as its graph. For $z\in \Gamma$, let us denote by $\nu^+(z)$ the outward unit normal vector to $\Omega_\varphi^+$. Let us consider a weight term $\rho(z)$ satisfying the following properties:
\begin{equation}\label{rhoconditions}
\begin{cases}
\rho>0 &\mathrm{in} \ \Omega_\varphi^+\cap B_1,\\
\rho=0 &\mathrm{on} \ \Gamma\cap B_1,\\
\partial_{\nu^+}\rho<0 &\mathrm{on} \ \Gamma\cap B_1.
\end{cases}
\end{equation}
Our first result concerns the validity of the Schauder estimates up to the hypersurface $\Gamma$ for energy solutions to \eqref{evenrho} in $\Omega_\varphi^+$. We extend here some results previously obtained in \cite{SirTerVit1,SirTerVit2} under an additional structural assumption on the matrix $A$ near $\Gamma$. Indeed, in the present paper the variable coefficient matrix $A=(a_{ij})_{ij}$ is only assumed to be continuous, symmetric and uniformly elliptic; that is, for some $0<\lambda\leq\Lambda<+\infty$
\begin{equation}\label{UE}
\lambda|\xi|^2\leq A(z)\xi\cdot\xi\leq\Lambda|\xi|^2
\end{equation}
for any $\xi\in\R^{n}$, and $z$ taken in the relevant bounded domain.

\begin{Theorem}[Schauder estimates up to the characteristic hypersurface $\Gamma$]\label{curvedschauder}
Let $a>-1$, $a^+=\max\{a,0\}$, $p>n+a^+$, $k\in\mathbb N$ and
\begin{equation*}
\begin{cases}
\alpha\in(0,1-\frac{n+a^+}{p}] &\mathrm{if \ }k=0 \ \mathrm{and \ } p<+\infty\\
\alpha\in(0,1) &\mathrm{if \ }k=0 \ \mathrm{and \ } p=\infty \ \mathrm{or \ }k\geq1.
\end{cases}
\end{equation*}
Let $\varphi\in C^{k+1,\alpha}(B'_1)$ and $\rho\in C^{k+1,\alpha}(\Omega_\varphi^+\cap B_1)$ satisfying \eqref{rhoconditions}. Let $w\in H^1(\Omega_\varphi^+\cap B_1,\rho(z)^adz)$ be a weak solution to \eqref{evenrho} with $F,A\in C^{k,\alpha}(\Omega_\varphi^+\cap B_1)$, $f\in L^p(\Omega_\varphi^+\cap B_1,\rho(z)^adz)$ when $k=0$ or $f\in C^{k-1,\alpha}(\Omega_\varphi^+\cap B_1)$ when $k\geq1$. Then, there exists $0<r<1$ such that $w\in C^{k+1,\alpha}(\Omega_\varphi^+\cap B_r)$ with boundary condition
\begin{equation}\label{extremalityintro}
(A\nabla w+F)\cdot\nu^+=0\qquad\mathrm{on \ } \Gamma\cap B_r.
\end{equation}
Moreover, if $\beta>1$,
$$\|A\|_{C^{k,\alpha}(\Omega_\varphi^+\cap B_1)}+\|\rho\|_{C^{k+1,\alpha}(\Omega_\varphi^+\cap B_1)}+\|\varphi\|_{C^{k+1,\alpha}(B'_1)}\leq L_1,\qquad \inf_{B_{\frac{1+r}{2}}\cap\Gamma}|\partial_{\nu^+}\rho|\geq L_2>0,
$$
then there exists a constant $c>0$ depending only on $n,\lambda,\Lambda,a,p,\alpha,r,k,\beta,L_1,L_2$, such that, for any energy solution to \eqref{evenrho} in $\Omega_\varphi^+\cap B_1$ holds
\begin{equation*}
\|w\|_{C^{k+1,\alpha}(\Omega_\varphi^+\cap B_r)}\leq c\left(\|w\|_{L^\beta(\Omega_\varphi^+\cap B_1,\rho(z)^adz)}+\|f\|_{C^{k-1,\alpha}(\Omega_\varphi^+\cap B_1)}+\|F\|_{C^{k,\alpha}(\Omega_\varphi^+\cap B_1)}\right),
\end{equation*}
where $\|f\|_{C^{k-1,\alpha}(\Omega_\varphi^+\cap B_1)}$ must be replaced with $\|f\|_{L^{p}(\Omega_\varphi^+\cap B_1,\rho(z)^adz)}$ in the case $k=0$.
\end{Theorem}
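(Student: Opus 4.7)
The plan is to flatten $\Gamma$ and then reduce the weight to the pure power $y^a$. Using $\varphi\in C^{k+1,\alpha}$, I would construct a $C^{k+1,\alpha}$ diffeomorphism $\Phi$ sending $B_1^+$ onto $\Omega_\varphi^+\cap B_1$ with $\{y=0\}\cap B_1'$ mapped onto $\Gamma\cap B_1$. Pulling back \eqref{evenrho} through $\Phi$ gives an equation of the same form on $B_1^+$ with a new $C^{k,\alpha}$, uniformly elliptic matrix $\tilde A$ and weight $\tilde\rho=\rho\circ\Phi$. By \eqref{rhoconditions} and the lower bound $|\partial_{\nu^+}\rho|\geq L_2$, the function $\tilde\rho$ vanishes linearly on $\{y=0\}$, so I can factor $\tilde\rho=y\,h(x,y)$ with $h\in C^{k,\alpha}(\overline{B_1^+})$ and $h\geq h_0>0$. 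Absorbing $h^a$ into the matrix, the equation reduces to
\begin{equation*}
-\mathrm{div}(y^a\bar A\nabla w)=y^a\bar f+\mathrm{div}(y^a\bar F)\qquad\text{in }B_1^+,
\end{equation*}
with $\bar A=h^a\tilde A$ uniformly elliptic and $C^{k,\alpha}$ up to $\{y=0\}$. The theorem is thus reduced to Schauder estimates for this flat model, and pulling the estimate back through $\Phi$ gives the quantitative bound in the original geometry with constants depending on $L_1,L_2$.

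For the flat model I would proceed by induction on $k$. The base step ($C^{1,\alpha}$ regularity) is a Campanato-type iteration: after a linear change in $x$ reducing $\bar A(0)$ to block-diagonal form, I compare $w$ on scaled half-balls with a solution $W$ of the frozen model problem $-\mathrm{div}(y^a\bar A(0)\nabla W)=0$, for which explicit polynomial solutions (even in $y$) and sharp decay give the decay of the excess Campanato quantity. The boundary condition \eqref{extremalityintro} is not imposed a priori but recovered a posteriori: once $w$ is known to be Hölder regular up to $\{y=0\}$, testing the weak formulation with functions non-vanishing on the flat boundary and integrating by parts forces $y^a(\bar A\nabla w+\bar F)\cdot e_n\to 0$. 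The inductive step differentiates tangentially: each $\partial_{x_i}w$ solves an equation of the same form with one-derivative-worse right-hand sides, so the inductive hypothesis yields the tangential $C^{k+1,\alpha}$ bounds. Purely normal derivatives are then recovered algebraically by solving the equation for $a_{nn}\partial_y^2 w$ in terms of tangential quantities plus a weighted term whose ``initial value'' at $y=0$ is governed by \eqref{extremalityintro}; iterating this recurrence produces all the mixed and higher normal derivatives.

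The main obstacle — and the point where the present work goes beyond \cite{SirTerVit1,SirTerVit2} — is that no structural assumption on $\bar A$ is made near $\{y=0\}$. In particular the off-diagonal entries $\bar a_{in}$ with $i<n$ need not vanish on the flat boundary, so naive tangential differentiation produces boundary contributions of type $y^a\bar a_{in}\partial_y w$ which are not a priori controllable by $\bar A$-ellipticity alone. The key compensation comes precisely from the natural Neumann condition \eqref{extremalityintro}: when properly combined with tangential differentiation, the limit $y^a(\bar A\nabla w+\bar F)\cdot e_n\to 0$ exactly cancels the obstructive flux terms. I would implement this through a blow-up/contradiction scheme, relying on a Liouville-type classification of entire energy solutions on $\R^n_+$ of the model problem with weight $y^a$ (polynomial growth implies polynomial solutions, even in $y$ after normalization); if the desired Schauder bound failed, rescalings would converge to a non-trivial entire solution contradicting the classification. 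The quantitative estimate then follows by a standard covering argument together with control of $\|w\|_{L^\beta(\Omega_\varphi^+\cap B_1,\rho^adz)}$ absorbed via an interpolation/Caccioppoli step at the initial Hölder level.
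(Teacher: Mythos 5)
Your proposal follows essentially the same route as the paper. The flattening diffeomorphism $\Phi(x,y)=(x,y+\varphi(x))$, the factorization $\tilde\rho=y\,h$ with $h\in C^{k,\alpha}$ bounded away from zero (which is exactly Lemma \ref{lem1}), and the absorption of $h^a$ into the coefficients and right-hand side to reduce to the pure-power weight $y^a$ on $B_1^+$ are precisely the reduction the paper performs in the proof of Theorem \ref{curvedschauder}. Your inductive scheme for $k\geq1$ --- tangential differentiation to get $C^{k+1,\alpha}$ bounds on $\partial_{x_i}w$, then recovering the purely normal derivatives from the ODE structure $\partial_y\varphi+\tfrac{a}{y}\varphi=g$ for $\varphi=(\bar A\nabla w+\bar F)\cdot\vec e_n$ together with $a_{nn}\geq\lambda>0$ --- is exactly Lemmata \ref{teok} and \ref{teok1}. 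You also correctly identify the technical heart: no structural assumption such as \eqref{invariance} is imposed on $\bar A$ near $\{y=0\}$, and the formal Neumann condition \eqref{evenBC} supplies the cancellation that makes the argument close.

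One small point of imprecision: you first propose a Campanato-type freezing/comparison iteration for the base $C^{1,\alpha}$ step, then pivot to a blow-up/contradiction scheme relying on a Liouville classification. These are two genuinely different techniques, and the paper uses only the latter for Theorem \ref{EVENwithoutHA} (following the approach of \cite{SoaTer10}, with Theorem \ref{liouvilletheoremzero} as the Liouville input, applied after normalizing $\hat A$ to the identity via its square root $C=\hat A^{1/2}$). A Campanato approach should in principle also work, but it would require building an appropriate scale-invariant excess functional adapted to the degenerate weight and proving decay of that excess against frozen-coefficient solutions, which is not what the paper does; so if you want to match the paper's proof exactly you should drop the Campanato sentence and commit to the blow-up route you describe afterwards. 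Everything else (deriving the boundary condition a posteriori from $C^1$ convergence of the $\varepsilon$-regularized solutions rather than imposing it, and tracking the constants through $L_1$, $L_2$) is aligned with the paper.
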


Note the appearance of a true Neumann boundary condition \eqref{extremalityintro} and compare with the natural one in \eqref{evenBC0}.

As a further consequence of Theorem \ref{curvedschauder}, when $a>0$ we will also provide Schauder estimates for solutions to the singularly forced equation
\begin{equation}\label{eqcurved1}
-\mathrm{div}\left(\rho^aA\nabla w\right)=\rho^a\dfrac{f}{\rho}
\end{equation}
in $\Omega_\varphi^+\cap B_1$ (see Theorem \ref{singularcurved}).

\subsection*{One-sided higher order boundary Harnack principle with right hand sides}
Let us denote by $L_A$ the differential operator $\mathrm{div}\left(A\nabla \cdot\right)$. Theorem \ref{curvedschauder} has a remarkable application to higher order boundary Harnack inequalities for ratios of solutions to equations ruled by $L_A$, as  obtained in \cite{DesSav} by De Silva and Savin and recently extended in \cite{Kuk} for the analogue parabolic problem in a nondivergence form. In \cite{MaiTorVel} the authors addressed the same problem in domains arising from shape optimization problems. Recent literature on boundary Harnack principles in Lipschitz domains also includes
   \cite{AllSha, RosTor}, where the presence of a right hand side is covered. Let us consider two functions $u,v$ solving
\begin{equation}\label{BHconditions}
\begin{cases}
-L_Av=f &\mathrm{in \ }\Omega_\varphi^+\cap B_1,\\
-L_Au=g &\mathrm{in \ }\Omega_\varphi^+\cap B_1,\\
u>0 &\mathrm{in \ }\Omega_\varphi^+\cap B_1,\\
u=v=0, \qquad \partial_{\nu^+} u<0&\mathrm{on \ }\Gamma\cap B_1.
\end{cases}
\end{equation}
It can be easily proven that the ratio $w=v/u$ is an energy solution to
\begin{equation}\label{e:r}
-\mathrm{div}\left(u^2A\nabla w\right)=u (f-gw) \quad\mathrm{in \ }\Omega_\varphi^+\cap B_1.
\end{equation}
\begin{Theorem}[Higher order boundary Harnack principle]\label{HOBHP}
Let $k\in\mathbb N$ and $\alpha\in(0,1)$. Let us consider two functions $u,v$ solving \eqref{BHconditions}. Let us assume that $A,f,g\in C^{k,\alpha}(\Omega_\varphi^+\cap B_1)$ and $\varphi\in C^{k+1,\alpha}(B'_1)$. Then, $w=v/u$ belongs to $C^{k+1,\alpha}_\loc(\overline{\Omega_\varphi^+}\cap B_1)$ and satisfies the following boundary condition
\begin{equation}\label{boundaryHOBH}
2(\nabla u\cdot\nu^+)A\nabla w\cdot\nu^++f-gw=0\qquad\mathrm{on \ } \Gamma\cap B_1.
\end{equation}
Moreover, if $\|A\|_{C^{k,\alpha}(\Omega_\varphi^+\cap B_{1})}+\|\varphi\|_{C^{k+1,\alpha}(B'_{1})}+\|g\|_{C^{k,\alpha}(\Omega_\varphi^+\cap B_{1})}\leq L_1, \|u\|_{L^2(\Omega_\varphi^+\cap B_{1})}\leq L_2$ and $\inf_{B_{3/4}\cap\Gamma}|\partial_{\nu^+}u|\geq L_3>0$, then the following estimate holds true
\begin{equation*}\label{eq.daperfez}
\left\|\frac{v}{u}\right\|_{C^{k+1,\alpha}(\Omega_\varphi^+\cap B_{1/2})}\leq C\left(\left\|v\right\|_{L^2(\Omega_\varphi^+\cap B_{1})}+\|f\|_{C^{k,\alpha}(\Omega_\varphi^+\cap B_{1})}\right)
\end{equation*}
for any $u,v$ satisfying \eqref{BHconditions} and with a positive constant $C$ depending on $n,\lambda,\Lambda,\alpha,k,L_1,L_2,L_3$. Finally, if $u(\vec e_n/2)=1$ and $v>0$ in $\Omega_\varphi^+\cap B_{1}$, then
\begin{equation*}
\left\|\frac{v}{u}\right\|_{C^{k+1,\alpha}(\Omega_\varphi^+\cap B_{1/2})}\leq C\left(\left|\frac{v}{u}(\vec e_n/2)\right|+\|f\|_{C^{k,\alpha}(\Omega_\varphi^+\cap B_{1})}\right)
\end{equation*}
with a positive constant $C$ depending only on $n,\lambda,\Lambda,\alpha,k,L_1,L_2,L_3$. 
\end{Theorem}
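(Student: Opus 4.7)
The plan is to view $w = v/u$ as an energy solution of the singularly forced degenerate equation \eqref{e:r}, which has exactly the form \eqref{eqcurved1} with weight $\rho = u$ and exponent $a = 2$, and then to invoke Theorem \ref{singularcurved} in a bootstrap argument. The weight $\rho = u$ automatically satisfies \eqref{rhoconditions} by the hypotheses on $u$; its required $C^{k+1,\alpha}$ regularity follows from classical boundary Schauder applied to $-L_A u = g$ with Dirichlet datum $u = 0$ on the $C^{k+1,\alpha}$ hypersurface $\Gamma$, yielding $u \in C^{k+1,\alpha}(\overline{\Omega_\varphi^+}\cap\overline{B_{3/4}})$ quantitatively controlled by $\|u\|_{L^2}+\|g\|_{C^{k,\alpha}}\leq C(L_1,L_2)$.

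A direct computation using the symmetry of $A$ and the equations for $u$ and $v$ gives
\begin{equation*}
-\mathrm{div}(u^2 A\nabla w) = -\mathrm{div}(uA\nabla v - vA\nabla u) = -u L_A v + v L_A u = u(f - gw),
\end{equation*}
so $w$ formally solves \eqref{e:r}. To place $w$ in the energy space $H^1(\Omega_\varphi^+\cap B_{3/4}, u^2 dz)$, note $u^2 w^2 = v^2 \in L^1$ and $u^2 |\nabla w|^2 \leq 2|\nabla v|^2 + 2 w^2 |\nabla u|^2$; classical boundary Schauder for $-L_A v = f$ gives $v\in C^{1,\alpha}$ up to $\Gamma$ with $v|_\Gamma=0$, so $|v|\lesssim\mathrm{dist}(\cdot,\Gamma)$, while the Hopf hypothesis yields $u\gtrsim\mathrm{dist}(\cdot,\Gamma)$, whence $w\in L^\infty$ and the integrability is secured. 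Since $f-gw\in L^\infty$, Theorem \ref{singularcurved} (case $k=0$) promotes $w$ to $C^{1,\alpha}$ together with the natural boundary condition for the singularly forced $a=2$ equation; formally dividing \eqref{e:r} by $u$ and evaluating on $\Gamma$ (where $\nabla u$ is parallel to $\nu^+$) gives precisely \eqref{boundaryHOBH}. For $k\geq 1$ one iterates: if $w\in C^{j,\alpha}$ then $f-gw\in C^{j,\alpha}$, and Theorem \ref{singularcurved} at level $j$ upgrades $w$ to $C^{j+1,\alpha}$; continuing until $j=k$ gives $w\in C^{k+1,\alpha}$.

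The quantitative bound follows by chaining the Schauder estimates of Theorem \ref{singularcurved} through the iterations, using $\|w\|_{L^2(u^2 dz)}=\|v\|_{L^2}$ and standard interpolation to absorb the intermediate Schauder norms of $w$ arising from the $gw$ term on the right-hand side. The refined estimate, under the normalization $u(\vec e_n/2)=1$ and sign assumption $v>0$, follows by applying the classical first-order boundary Harnack principle to $u$ and $v$ (with $f,g$ treated as lower-order perturbations), bounding $\|v\|_{L^2(B_1)}$ by $|v(\vec e_n/2)|=|w(\vec e_n/2)|$ up to an $\|f\|_{C^{k,\alpha}}$ error. The main obstacle is the rigorous identification of the pointwise boundary condition \eqref{boundaryHOBH}: since $a=2$ is in the superdegenerate regime, trace theory on $\Gamma$ is unavailable for elements of $H^1(\Omega_\varphi^+\cap B_{3/4}, u^2 dz)$, and \eqref{boundaryHOBH} must be extracted from the natural formulation of the singularly forced equation as codified in Theorem \ref{singularcurved}.
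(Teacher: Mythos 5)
Your overall route is the same one the paper takes: rewrite $w=v/u$ as an energy solution of the singularly forced degenerate equation \eqref{e:r} with weight $\rho=u$ and $a=2$, then quote the curved Schauder theory (Theorem \ref{singularcurved}, equivalently Corollary \ref{Cork} after flattening) to gain one extra derivative and read off the boundary condition \eqref{boundaryHOBH} from \eqref{bcaf}. Your alternative energy-space argument ($u^2w^2=v^2$ plus a pointwise bound on $u^2|\nabla w|^2$ using $w\in L^\infty$) is a valid substitute for the paper's appeal to the Hardy inequality.

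There is, however, a genuine gap in the initialization of your bootstrap. You write that since $f-gw\in L^\infty$, Theorem \ref{singularcurved} with $k=0$ already promotes $w$ to $C^{1,\alpha}$. But the singularly forced Schauder theorem requires the forcing to lie in $C^{k,\alpha}$ with $k\geq 0$, i.e.\ at least $C^{0,\alpha}$; the $L^p$ relaxation in Theorem \ref{curvedschauder} applies to the equation with a regular right-hand side $\rho^a f$, not to $\rho^a f/\rho$, and rewriting $u(f-gw)=u^2\cdot(f-gw)/u$ does not help because $(f-gw)/u\notin L^p(u^2dz)$ for the required $p>n+2$ (it forces $p<3$ near $\Gamma$). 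So with only $w\in L^\infty$ the base step does not fire. The fix is the paper's observation: after flattening, $\tilde w=\tilde v/\tilde u=(\tilde v/y)(\tilde u/y)^{-1}$, where $\tilde u/y,\tilde v/y\in C^{k,\alpha}$ by Lemma \ref{lem1} and $\tilde u/y\geq\mu>0$ by Hopf, so $w\in C^{k,\alpha}$ immediately (not merely $L^\infty$), and then Corollary \ref{Cork}/Theorem \ref{singularcurved} applies once, at level $k$, with no iteration needed. Your bootstrap would also close once you establish $w\in C^{0,\alpha}$ as the base case, but the cleaner one-shot argument is preferable since it also removes the need to track the growth of constants and shrinking radii across $k$ iterations, which you wave at via "interpolation" but do not make precise.

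One more minor remark: your heuristic derivation of \eqref{boundaryHOBH} by "formally dividing by $u$ and evaluating on $\Gamma$" is not rigorous for the reason you yourself identify (no trace on $\Gamma$ in the superdegenerate regime $a=2$); the paper obtains it precisely as the pulled-back form of \eqref{extremality2}, i.e.\ \eqref{bcaf} with $\rho=u$, $a=2$, and forcing $f-gw$, which your proposal correctly anticipates.
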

Under the assumptions of Theorem \ref{HOBHP}, the standard Schauder estimates up to the boundary imply that both $u$ and $v$ belong to the class $C^{k+1,\alpha}_\text{loc}(\overline{\Omega^+_\varphi}\cap B_1)$ (and this is optimal). Therefore, as $u$ vanishes with a nonzero gradient, we can readily deduce that $v/u$ is in $C^{k,\alpha}_\text{loc}(\overline{\Omega^+_\varphi}\cap B_1)$. Thus our result improves the basic regularity by one degree of differentiability at the common zero set, due to the fact that both $u$ and $v$ satisfy the same type of elliptic equations, also in the non homogenous case.  Note that our Theorem \ref{HOBHP} does not follow directly from the statement in \cite{DesSav} which does not allow forcings $g$, neither from \cite{Kuk}, due to the possible lack of regularity of the derivatives of the metric $A$, since the result there is proved for equations in nondivergence form. Finally, it is worthwhile stressing that it seems to be the first time that boundary Harnack estimates are derived directly from the associated superdegenerate equation.

\subsection*{Ratios of solutions to uniformly elliptic equations sharing the same zero set}
Another interesting application of our results and methods concerns the regularity of ratios of $L_A$-harmonic functions \emph{across} their common nodal set.  This is connected with Logunov and Malinnikova's theorem stating analiticity of ratios of harmonic functions  sharing the same nodal set (see \cite{LogMal1,LogMal2}). While the approach there strongly relies on division Lemmata and analiticity estimates, the authors wonder whether an alternative one could be conducted through the analysis of the associated superdegenerate equation \eqref{eqw} fullfilled by the ratio $w=v/u$ (\S 5.2 of \cite{LogMal1}). A main goal of this paper is indeed to answer positively, though partially, this question. Concerning the boundary Harnack inequality for $L_A$-harmonic functions at the common nodal set, we quote the recent work by Lin and Lin \cite{LinLin}, who prove $C^{0,\alpha}$ bounds for  the ratio $w$, for a small $\alpha$ depending on the frequency bounds on $u$ and $v$.

To begin with, let $n\geq 2$ and $u \in H^1(B_1)$ be a weak solution to
\begin{equation}\label{equv}
L_Au=0 \qquad \mathrm{in \ }B_1,
\end{equation}
where $A(z)=(a_{ij}(z))_{ij}$ is a symmetric uniformly elliptic matrix with $\alpha$-H\"older continuous coefficients for some $\alpha\in(0,1)$.

By standard Schauder theory, any weak solution is actually of class $C^{1,\alpha}_\loc(B_1)$.
Thus the nodal set $Z(u)=u^{-1}(\{0\})$ of $u$ splits into a regular part $R(u)$ and a singular part $S(u)$ defined as
\begin{equation}\label{nodalset}
R(u)=\{z\in Z(u) \ : \ |\nabla u|\neq0\},\qquad S(u)=\{z\in Z(u) \ : \ |\nabla u|=0\};
\end{equation}
where $R(u)$ is in fact locally a $(n-1)$-dimensional hypersurface of class $C^{1,\alpha}$. In general, the hypersurface $R(u)$ inherits the regularity of the associated solution $u$, by implicit function theorem. Let us fix here a notation: given $\alpha\in(0,1]$, by $u\in C^{k,\alpha-}$ we mean $u\in C^{k,\beta}$ for any $0<\beta<\alpha$. Let us remark here that if we assume additionally that $A\in C^{0,1}(B_1)$, then solutions enjoy the unique continuation property and
$S(u)$ has Hausdorff dimension at most $(n-2)$ (see e.g. \cite{Han,HanLin,GarLin}).

Given a second solution $v$  to \eqref{equv} in $B_1$ with $Z(u)\subseteq Z(v)$, it is not difficult  to prove that the ratio $w=v/u$ is in fact an energy solution to the degenerate elliptic equation
\begin{equation}\label{eqw}
\mathrm{div}(u^2A\nabla w)=0\qquad \mathrm{in \ }B_1,
\end{equation}
in the sense of weak solution belonging to the weighted Sobolev space $H^{1}(B_1,u^2dz)$. \gio{This  holds true across a wide range of coefficients and primarily relies on a Hardy-type inequality for functions vanishing on $Z(u)$ (refer to Lemma \ref{hardy}). While generic solutions to \eqref{eqw} may not necessarily be continuous at the zero set of the weight $u$,  the ratio $w$ exhibits H\"{o}lder continuity when $A$ is locally Lipschitz, as demonstrated in \cite{LinLin}.} Our next goal is to prove Schauder estimates for the ratio across the regular part $R(u)$.

\begin{Theorem}[Schauder estimates for the ratio across the regular part of the nodal set $R(u)$]\label{schauderR(u)}
Let  $A\in C^{k,\alpha}(B_1)$, for some $k\in\N$ and $\alpha\in(0,1)$, and $(u,v)$ a pair of solutions to \eqref{equv}, such that $S(u)\cap B_1=\emptyset$ and $Z(u)\subseteq Z(v)$.  Then $w=v/u\in C^{k+1,\alpha}_\loc(B_1)$ and in addition we have
\begin{equation*}
A\nabla w\cdot\nu=0\qquad \mathrm{on \ }R(u)\cap B_1,
\end{equation*}
where $\nu$ is the unit normal vector on $R(u)$. Moreover, let $\|A\|_{C^{k,\alpha}(B_1)}\leq L$ and $u$ be a solution to \eqref{equv} such that $S(u)\cap B_1=\emptyset$. Then, for any solution $v$ to \eqref{equv} with $Z(u)\subseteq Z(v)$, we have
\begin{equation*}
\left\| \frac{v}{u} \right\|_{C^{k+1,\alpha}(B_{1/2})}\leq C\left\|v\right\|_{L^2(B_1)},
\end{equation*}
with $C>0$ depending on $n,\lambda,\Lambda,\alpha,L, u$ and its nodal set $Z(u)$.
\end{Theorem}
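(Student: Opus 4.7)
The plan is to reduce the problem to two applications of the one-sided higher order boundary Harnack principle (Theorem~\ref{HOBHP}), one on each side of the regular nodal hypersurface $R(u)$, and then glue the two one-sided $C^{k+1,\alpha}$ estimates across $R(u)$. To set up, standard interior Schauder theory for $L_A u=0$ with $A\in C^{k,\alpha}$ yields $u\in C^{k+1,\alpha}_\loc(B_1)$. The hypothesis $S(u)\cap B_1=\emptyset$ gives $|\nabla u|>0$ on $Z(u)\cap B_1$, so by the implicit function theorem $R(u)\cap B_1=Z(u)\cap B_1$ is a $C^{k+1,\alpha}$ hypersurface that separates $B_1$ locally into the two open subsets $\Omega^\pm=\{\pm u>0\}\cap B_1$. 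After a local change of coordinates at any point of $R(u)$, the interface is the graph of a $C^{k+1,\alpha}$ function $\varphi$ as in \eqref{localizeGamma}.

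Next I would apply Theorem~\ref{HOBHP} to the pair $(u,v)$ on $\Omega^+$ and to the pair $(-u,-v)$ on $\Omega^-$. In both cases the denominator is strictly positive, both numerator and denominator are $L_A$-harmonic, and they vanish on the regular piece $\Gamma=R(u)$, so the system \eqref{BHconditions} holds with $f=g=0$. Theorem~\ref{HOBHP} then gives $w=v/u\in C^{k+1,\alpha}$ up to $R(u)$ from each side, together with the boundary condition \eqref{boundaryHOBH}, which with $f=g=0$ and $\partial_{\nu^\pm}u\neq 0$ reduces to the pure Neumann condition
\[
A\nabla w\cdot\nu^\pm=0\qquad\text{on }R(u)\cap B_1.
\]
Since $\nu^+=-\nu^-$, these two one-sided conditions are in fact the same identity, so the Neumann trace is unambiguous at the interface.

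To upgrade the one-sided statements to full $C^{k+1,\alpha}_\loc(B_1)$ regularity across $R(u)$, I would invoke the gluing lemma announced in the abstract. Continuity of $w$ at $R(u)$ follows from $Z(u)\subseteq Z(v)$ together with the nondegeneracy of $\nabla u$ (a de l'H\^opital argument), so the tangential derivatives of $w$ at $R(u)$ agree from both sides (they equal the intrinsic derivatives of the common trace). The Neumann condition, combined with uniform ellipticity of $A$, then pins down the remaining normal component of $\nabla w$ and yields $C^1$ matching. Compatibility of higher-order derivatives is obtained by differentiating the degenerate equation \eqref{eqw}, valid globally in $B_1$ in the energy sense, tangentially along $R(u)$ and using the Neumann condition iteratively to express higher normal derivatives in terms of tangential and lower-order normal data, both of which match by construction. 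A covering of any compact $K\Subset B_1$ by balls on which Theorem~\ref{HOBHP} applies with uniform constants, combined with the quantitative one-sided estimate there, yields the global bound $\|v/u\|_{C^{k+1,\alpha}(B_{1/2})}\leq C\|v\|_{L^2(B_1)}$, with $C$ absorbing the nondegeneracy constants $|\partial_{\nu^\pm}u|\geq L_3>0$ and the $C^{k+1,\alpha}$ norms of the local graph representations of $R(u)$ (hence the stated dependence on $u$ and $Z(u)$).

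The main obstacle is the gluing step itself: the Neumann condition delivers $C^1$ matching essentially for free, but passing from $C^1$ to $C^{k+1,\alpha}$ across $R(u)$ requires that all normal derivatives of $w$ up to order $k+1$ coincide from both sides. This is exactly what the gluing lemma must provide, and its proof rests on the fact that $w$ solves the same degenerate equation \eqref{eqw} on both sides, so the equation forces higher-order trace compatibility once the first-order compatibility is known. A secondary, more routine, difficulty is verifying that the constants in Theorem~\ref{HOBHP} can be controlled uniformly as the base point moves along $R(u)$; this follows from continuity of $\nabla u$ and compactness of the relevant subsets of $R(u)$ inside $B_1$.
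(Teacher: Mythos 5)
Your proposal is correct and follows essentially the same route as the paper: apply the one-sided higher order boundary Harnack principle (Theorem~\ref{HOBHP}) on each side of $R(u)$, establish continuity of $w$ across $R(u)$ by the de l'H\^opital computation using the nondegeneracy $\partial_\nu u\neq 0$, invoke the gluing Lemma (Lemma~\ref{gluinglemma2}) to transmit the $C^{k+1,\alpha}$ regularity across the interface, and finish with a covering argument. Your ``main obstacle'' paragraph on higher-order derivative matching is precisely what Lemma~\ref{gluinglemma2} already packages (via the flat gluing Lemma~\ref{gluinglemma} and the structural Lemmata~\ref{lem1}--\ref{lem2} applied to the degenerate equation), so once continuity of $w$ is in hand no further argument is needed there.
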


We remark that the estimates above depend on the nodal set $Z(u)$. Indeed we postpone to the forthcoming paper \cite{TerTorVit2} the discussion about the uniformity of the constants of the estimates by varying solutions $u$ (and consequently their nodal sets $Z(u)$) in a compact class of functions with bounded frequency.

Then, our next result concerns $C^{1,\alpha}$ estimates for the ratio $w$ across $S(u)$ in dimension $n=2$. To proceed further, we assume that coefficients are Lipschitz continuous. This will be needed to classify the singular points accordingly with their vanishing order, that is for $z_0 \in Z(u)$,
\begin{equation}\label{e:vanishing}
\mathcal{V}(z_0,u) = \sup\left\{\beta\geq 0: \ \limsup_{r \to 0^+} \frac1{r^{n-1+2\beta}} \int_{\partial B_r(z_0)} u^2 <+\infty\right\}.
\end{equation}
The vanishing order $\mathcal{V}(z_0,u)\geq0$ is characterized by the property that
$$
\limsup_{r \to 0^+} \frac1{r^{n-1+2\beta}} \int_{\partial B_r(z_0)} u^2  = \begin{cases} 0 & \text{if $0 <\beta< \mathcal{V}(z_0,u)$} \\
+\infty  & \text{if $\beta > \mathcal{V}(z_0,u)$}.
\end{cases}
$$

In case of Lipschitz continuous coefficients, the detection of the vanishing orders and the structure of the nodal set are strictly related with the validity of an Almgren type monotonicity formula (see \cite{GarLin,Han,HanLin,CheNabVal}).
In dimension $n=2$ the singular set $S(u)$ is a locally finite set at which the nodal set is made of a finite number of $C^{1,1-}$ curves meeting with equal angles, being the equality of angles true when the matrix is the identity at the given singular point (see \cite{Han,Han2,HanLin,HarWin1,HarWin}). It is worthwhile noticing that, although the $C^{1,1-}$ regularity of the regular curves, far from the singularities, is a natural consequence of the implicit function theorem, its extension up to singular points is far from trivial (see Lemma \ref{lemmacurves}).

\gio{If $u$ is a solution to \eqref{equv} in $B_1$ such that $S(u)\cap B_1=\{0\}, \mathcal{V}(0,u)=N$, after composing with a linear transformation that sets $A(0)=\mathbb I$, one can observe that a connected component $\Omega_u$ of the set $\{u\neq0\}$ is asymptotically a conical domain $\Omega_{\pi/N}$, with an aperture of $\pi/N$, whose boundary $\partial\Omega_{\pi/N}$ is parameterized by the juxtaposition of two $C^{1,1-}$ curves.} Then, given $a\in \R$ such that $|u|^a\in L^1(B_1)$, one can prove the following gradient estimate for solutions to
\begin{equation}\label{eqnodal}
\mathrm{div}\left(|u|^a B\nabla w\right)=0\qquad\mathrm{in \ } \Omega_u\cap B_1.
\end{equation}
We refer to the forthcoming paper \cite{TerTorVit2} for an exhaustive discussion on the interplay between the vanishing order of $u$ and the admissible exponents $a$ for which the weight is locally integrable.

\begin{Theorem}[Gradient estimates on a nodal domain in dimension $n=2$]\label{teoBnodal}
Let $n=2$ and consider a non-trivial solution $u$ to \eqref{equv} in $B_1$ such that $A\in C^{0,1}(B_1)$ and $S(u)\cap B_1=\{0\}$, with $\mathcal{V}(0,u)=N>1$. Then any solution to \eqref{eqnodal}, with $B\in C^{0,1}(\Omega_{u}\cap B_1)$ and $ B(0)=A(0)$, belongs to $C^{1,1/N-}_\loc(\overline\Omega_{u}\cap B_1)$ and satisfies the following condition
\begin{equation*}
\begin{cases}
B\nabla w\cdot\nu=0 &\mathrm{on \ } \partial\Omega_{u}\cap B_1\\
\nabla w(0)=0.
\end{cases}
\end{equation*}
Moreover, if $\|A\|_{C^{0,1}(B_1)}+\|B\|_{C^{0,1}(\Omega_{u}\cap B_1)}\leq L_1$, $u$ is a solution to \eqref{equv} in $B_1$ with $A\in C^{0,1}(B_1)$, $S(u)\cap B_1=\{0\}$, $\|u\|_{L^2(B_1)}\leq L_2$, $\mathcal V(0,u)\leq \overline N$, $\inf_{B_{3/4}\cap\partial\Omega_\pi}|\partial_{\nu^+}\overline u|\geq L_3>0$ (where $\overline u$ is defined in \eqref{compconf}), then for every $0<\beta<1/\overline N$, the following estimate holds true
\begin{equation*}
\left\|w\right\|_{C^{1,\beta}(\Omega_{u}\cap B_{1/2})}\leq C\|w\|_{L^2(\Omega_{u}\cap B_1,|u|^a(z)dz)},
\end{equation*}
for any solution of \eqref{eqnodal} with $C$ depending only on $\lambda,\Lambda,\overline N,a,\beta,L_1,L_2,L_3$.
\end{Theorem}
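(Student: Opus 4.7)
The plan is to exploit the conformal invariance of divergence form in dimension two to \emph{unfold} the conical singularity of $\Omega_u$ at the origin and reduce the problem to a setting covered by the one-sided Schauder estimate of Theorem \ref{curvedschauder}. After a linear change of variables I first assume $A(0)=B(0)=\mathbb I$. By the asymptotic analysis of the nodal set quoted in Lemma \ref{lemmacurves}, near $0$ the component $\Omega_u$ is a curvilinear sector of opening $\pi/N$ bounded by two $C^{1,1-}$ arcs tangent to the edges of the Euclidean cone $\Omega_{\pi/N}$.

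Identifying $\R^2\simeq\mathbb C$, I construct a holomorphic, hence conformal, map $\Phi$ on a neighbourhood of $0$ in $\Omega_u$ with leading behaviour $\Phi(z)=cz^N+\cdots$, sending $\Omega_u\cap B_r$ to a one-sided graph domain $\Omega^+_\varphi=\{y>\varphi(x)\}$ with $\varphi\in C^{1,1-}$, $\varphi(0)=|\nabla\varphi(0)|=0$. The pulled-back weight $\bar u:=u\circ\Phi^{-1}$ then vanishes to first order on $\partial\Omega^+_\varphi$ with $\partial_{\nu^+}\bar u<0$, and the conformal invariance of divergence-form operators in 2D makes $\bar w:=w\circ\Phi^{-1}$ an energy solution of
\begin{equation*}
\mathrm{div}\left(|\bar u|^a\,\bar B\,\nabla\bar w\right)=0\qquad\mathrm{in \ }\Omega^+_\varphi\cap B_{r'},
\end{equation*}
for a symmetric, uniformly elliptic, Lipschitz $\bar B$ with $\bar B(0)=\mathbb I$ (and it is precisely the hypothesis $B(0)=A(0)$ that keeps $\bar B$ Lipschitz through the corner). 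Theorem \ref{curvedschauder} applied with $k=0$, $\rho=\bar u$, and any $\alpha\in(0,1)$ gives $\bar w\in C^{1,\alpha}_\loc(\overline{\Omega^+_\varphi}\cap B_{r'})$ with Neumann condition $\bar B\nabla\bar w\cdot\nu^+=0$ on $\partial\Omega^+_\varphi$, and the quantitative bound in terms of $\|\bar w\|_{L^2(\Omega^+_\varphi,|\bar u|^a dz)}$.

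Pulling back by $\Phi$, the identity $\nabla w(z)=D\Phi(z)^\top(\nabla\bar w)(\Phi(z))$ together with $D\Phi(0)=0$ for $N>1$ immediately yields $\nabla w(0)=0$ and the boundary condition $B\nabla w\cdot\nu=0$ on $\partial\Omega_u$; away from $0$ the map $\Phi$ is biholomorphic and the $C^{1,\alpha}$ regularity of $\bar w$ transfers directly. At the corner, a careful scale-by-scale H\"older analysis of the composition $w=\bar w\circ\Phi$, based on the $C^{1,1-}$ regularity of the arcs and the $1/N$-H\"older nature of $\Phi^{-1}$ at $0$, pins the terminal H\"older exponent of $\nabla w$ to an arbitrary $\beta<1/N$; the weighted $L^2$ bound then converts to $\|w\|_{L^2(\Omega_u\cap B_1,|u|^a dz)}$ under the change of variables. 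The main obstacle I expect is the sharp control of $\Phi$ through the corner: constructing $\Phi$ with the correct leading behaviour and enough boundary regularity to enter Theorem \ref{curvedschauder} rests on the non-trivial $C^{1,1-}$ regularity of the nodal arcs (Lemma \ref{lemmacurves}), and the same mechanism is what caps the final regularity of $\nabla w$ at $1/N-$.
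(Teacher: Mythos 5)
Your overall strategy is the same as the paper's: compose with a linear map so that $A(0)=B(0)=\mathbb{I}$, then open the conical nodal domain by a conformal map of order $N$, reduce to the one-sided Schauder estimate of Section~\ref{Section2}, and pull back. The gap is in your account of what the conformal unfolding does to the data.

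You claim that the unfolded matrix $\bar B$ is Lipschitz and that the unfolded boundary is a graph of some $\varphi\in C^{1,1-}$, attributing the preservation of Lipschitz regularity to the hypothesis $B(0)=A(0)$. Neither is correct. Writing $\Psi(z)=z^N$ and $\bar B(\zeta)=[J_\Psi B J_\Psi^T]\circ\Psi^{-1}(\zeta)\,|\det J_{\Psi^{-1}}(\zeta)|$, the scalar factors cancel in dimension two (as computed in the paper, $J_\Psi A J_\Psi^T\,|\det J_{\Psi^{-1}}\circ\Psi|$ stays Lipschitz \emph{before} the composition with $\Psi^{-1}$ precisely because $A(0)=\mathbb{I}$), but the final composition with $\Psi^{-1}(\zeta)=\zeta^{1/N}$, which is only $1/N$--H\"older at the origin, downgrades $\bar B$ to $C^{0,1/N}$ and no better. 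The condition $B(0)=A(0)=\mathbb{I}$ is exactly what makes $\bar B$ \emph{continuous} at the corner; it cannot keep it Lipschitz. Likewise, Lemma~\ref{openingboundary} shows that opening a sector of aperture $\pi/N$ whose arcs are only $C^{1,1-}$ up to the vertex produces a boundary that is $C^{1,1/N-}$, not $C^{1,1-}$. Consequently, $\bar u$ is at best $C^{1,1/N-}$, the weight ratio $(\tilde u/y)^a$ and the straightened matrix are $C^{0,1/N-}$, and Theorem~\ref{EVENwithoutHA} (or Theorem~\ref{curvedschauder} with $k=0$) can only be invoked with $\alpha<1/N$. That, and not a ``scale-by-scale H\"older analysis of $w=\bar w\circ\Phi$,'' is where the exponent $1/N-$ actually comes from.

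Your last paragraph in fact argues against itself: if $\bar w$ really were $C^{1,1-}$, then since $\Phi\sim cz^N$ is Lipschitz with $D\Phi(z)=O(|z|^{N-1})$, the composition $w=\bar w\circ\Phi$ would be \emph{more} regular near the vertex (the gradient vanishes to order $N-1\ge 1$), not capped at $C^{1,1/N-}$. So if your regularity claims for $\bar B$ and $\varphi$ held, you would prove a stronger statement than the theorem itself, which is a reliable sign that something is off. The fix is precisely to track the degradation of coefficient and boundary regularity through $\Psi^{-1}$, as the paper does, and to run the one-sided Schauder machinery with exponent strictly below $1/N$; the remainder of your outline (linear normalization, Hopf--Oleinik for the non-degeneracy of $\bar u$, pull-back of the conormal condition and $\nabla w(0)=0$ from $D\Phi(0)=0$) then goes through.
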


\gio{In the specific case $a=2$, Theorem \ref{teoBnodal} yields $C^{1,\alpha}$ estimates for the ratio $w$ across $S(u)$. Since the vanishing order $z \mapsto \mathcal{V}(z,u)$ is upper semi-continuous  \cite[Lemma 1.4]{Han}, given $r \in (0,1)$ we can define
\begin{equation}\label{maxV}
N_0(r)=N_0(r,u)= \max_{z_0\in \overline{B_r}}\mathcal{V}(z_0,u).
\end{equation}
Clearly, the value of $N_0(r)$ depends only on the geometry of the fixed nodal set $Z(u)$.}
\begin{Theorem}[Gradient estimates for the ratio in dimension $n=2$]\label{teodim2}
Let $n=2, A\in C^{0,1}(B_1)$ and $(u,v)$ be a pair of solutions to \eqref{equv}, such that $Z(u)\subseteq Z(v)$. Then, for every $r\in (0,1)$ the ratio $w=v/u$ belongs to $C^{1,1/N_0(r)-}(B_r)$ and satisfies the following condition
\begin{equation*}
\begin{cases}
A\nabla w\cdot\nu=0 &\mathrm{on \ } R(u)\cap B_1\\
\nabla w=0 &\mathrm{on \ } S(u)\cap B_1.
\end{cases}
\end{equation*}
Moreover, let $\|A\|_{C^{0,1}(B_1)}\leq L$ and $u$ be a solution to \eqref{equv}. Then, for any solution $v$ to \eqref{equv} with $Z(u)\subseteq Z(v)$ and $0<\beta<1/N_0(1/2)$, it holds
\begin{equation*}
\left\|\frac{v}{u}\right\|_{C^{1,\beta}(B_{1/2})}\leq C\left\|v\right\|_{L^2(B_1)},
\end{equation*}
with $C>0$ depending on $n,\lambda,\Lambda,\beta, L, u$ and its nodal set $Z(u)$.
\end{Theorem}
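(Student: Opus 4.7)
\smallskip
\noindent\textbf{Proof proposal.} The idea is to combine the two regularity results already at our disposal---Theorem \ref{schauderR(u)} along the regular part of $Z(u)$ and Theorem \ref{teoBnodal} around each singular point---via a finite covering argument, exploiting the peculiar structure of nodal sets of solutions to \eqref{equv} with Lipschitz coefficients in $n = 2$. Indeed, $S(u) \cap \overline{B_r}$ is a finite set $\{z_1, \ldots, z_M\}$, and the upper semicontinuity of the vanishing order forces $N_0(r) < +\infty$ with $\mathcal{V}(z_i, u) = N_i \leq N_0(r)$.

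Cover $\overline{B_r}$ by finitely many balls of three types: (i) those contained in $\{u \neq 0\}$, on which $w$ is smooth by standard interior elliptic theory; (ii) those centered on $R(u)$ and disjoint from $S(u)$, on which Theorem \ref{schauderR(u)} with $k = 0$ yields $C^{1,\alpha}$ estimates across $R(u)$ for any $\alpha < 1$, together with the Neumann condition $A \nabla w \cdot \nu = 0$; (iii) small balls $B_{\rho_i}(z_i)$ meeting $S(u)$ only at $z_i$. On the latter, the structure result in Lemma \ref{lemmacurves} asserts that $Z(u) \cap B_{\rho_i}(z_i)$ consists of $N_i$ curves of class $C^{1,1-}$ crossing at $z_i$, splitting $B_{\rho_i}(z_i) \setminus Z(u)$ into $2 N_i$ nodal sectors $\Omega_u^{(j)}$. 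On each sector, $w = v/u$ solves $\mathrm{div}(u^2 A \nabla w) = 0$, which is exactly equation \eqref{eqnodal} with $a = 2$ and $B = A$, and the hypotheses of Theorem \ref{teoBnodal} (after translating $z_i$ to the origin) are trivially satisfied. It therefore yields, for any $\beta < 1/N_i$, a one-sided $C^{1,\beta}$ bound on each closed sector, together with the vanishing $\nabla w(z_i) = 0$ and the Neumann condition along each nodal curve.

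The gluing step is the delicate one. Continuity of $w$ across the entire nodal set is supplied a priori by the Lin-Lin boundary Harnack estimate \cite{LinLin}, so the boundary values of $w$ from adjacent sectors agree at $z_i$. Gradients match trivially at $z_i$ (all equal to zero) and across $R(u) \setminus S(u)$ by Theorem \ref{schauderR(u)}. To upgrade the sector-by-sector $C^{1,\beta}$ bounds to a two-sided one in $B_{\rho_i}(z_i)$, given $p, q$ in distinct sectors we estimate
\[
|\nabla w(p) - \nabla w(q)| \leq |\nabla w(p) - \nabla w(z_i)| + |\nabla w(z_i) - \nabla w(q)| \leq C \bigl( |p - z_i|^\beta + |q - z_i|^\beta \bigr);
\]
since adjacent sectors have opening angle bounded below by $\pi/N_i$, one has $|p - z_i|, |q - z_i| \leq C |p - q|$ whenever $p$ and $q$ are sufficiently close and lie in distinct sectors, and the desired $|p - q|^\beta$ bound follows. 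Choosing $\beta < 1/N_0(r)$ and combining over the finite covering yields the local $C^{1,\beta}$ conclusion, while the quantitative estimate uses the identity $\|w\|_{L^2(\Omega_u^{(j)}, u^2\, dz)} = \|v\|_{L^2(\Omega_u^{(j)})} \leq \|v\|_{L^2(B_1)}$ to feed into the right-hand side of Theorem \ref{teoBnodal}; the dependence of the constant on the nodal set $Z(u)$ comes from the number and location of the singular points and from the quantities $\rho_i$, $L_3$ attached to each. The principal obstacle is precisely this gluing at singular points: without the vanishing $\nabla w(z_i) = 0$ supplied by Theorem \ref{teoBnodal}, the one-sided Hölder bounds on individual sectors could not be combined into a two-sided $C^{1,\beta}$ estimate across $S(u)$.
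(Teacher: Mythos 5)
Your overall architecture---cover $\overline{B_r}$ by balls of three types, apply Theorem \ref{schauderR(u)} near $R(u)$, Theorem \ref{teoBnodal} sector-by-sector near each singular point, and then glue---matches the paper's proof, which proceeds in exactly these three steps. The quantitative part (feeding $\|w\|_{L^2(\Omega_u^{(j)},u^2\,dz)}=\|v\|_{L^2(\Omega_u^{(j)})}$ into Theorem \ref{teoBnodal}) and the role of $\nabla w(z_i)=0$ as the key to the two-sided estimate are also correctly identified. The reliance on Lin--Lin \cite{LinLin} for continuity across $Z(u)$ is not needed (continuity across $R(u)$ follows from the $C^1$-regularity of $u,v$ and $\partial_\nu u\neq 0$, as in the proof of Theorem \ref{schauderR(u)}, and at $S(u)$ from the uniform sector estimates), but it is harmless.

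The genuine gap is in the gluing step across adjacent sectors. You claim that $|p-z_i|,\,|q-z_i|\leq C|p-q|$ ``whenever $p$ and $q$ are sufficiently close and lie in distinct sectors,'' and then conclude via the triangle inequality through $z_i$. This inequality fails precisely when $p$ and $q$ lie in \emph{adjacent} sectors, on opposite sides of their common nodal arc $\gamma$, and both are far from the vertex: one can have $|p-q|$ arbitrarily small while $|p-z_i|\approx|q-z_i|$ remains bounded below. The angular separation between $p-z_i$ and $q-z_i$ is then arbitrarily small, so no comparison with $|p-q|$ is available. Your triangle-inequality argument only works when $p,q$ lie in \emph{non-adjacent} sectors (angular separation bounded below by $\approx\pi/N_i$), or more generally when $\min(|p-z_i|,|q-z_i|)\lesssim|p-q|$.

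The missing ingredient is exactly what Lemma \ref{gluinglemma2} provides and what the paper invokes in Step 2 of its proof: applied at a point $\xi\in\gamma\setminus\{z_i\}$, the gluing Lemma shows that the one-sided values of $\nabla w$ from the two adjacent sectors coincide on $\gamma$, so that for $p,q$ on opposite sides of $\gamma$ with $|p-q|\ll\mathrm{dist}(p,z_i)$ one can insert a nearby point $\xi\in\gamma$ and use the matching one-sided $C^{1,\beta}$ estimates from each side. The correct gluing near $z_i$ therefore requires a dichotomy: if $\min(|p-z_i|,|q-z_i|)\leq 2|p-q|$, use the triangle inequality through $z_i$ with $\nabla w(z_i)=0$; otherwise the points lie in adjacent sectors away from the vertex and one uses the gluing Lemma across the shared curve. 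Your proposal supplies only the first branch, and the statement meant to cover the second branch is false as written.
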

\gio{We point out that the $C^{1,\alpha}$ estimate above is consistent with the $L^\infty$ bound given by Mangoubi \cite{Man} (always in dimension $n=2$) for gradients of ratios of harmonic functions.}

\subsection*{A Liouville type theorem}

Our technique relies upon blow-up and a Liouville type theorem, which is of independent interest. This expresses rigidity properties of entire solutions and is useful for classification purposes, upon knowledge of the growth rate at infinity. The following theorem includes \gio{\cite[Corollary 3.5]{SirTerVit1}}.

\begin{Theorem}[Liouville theorem for entire solutions on a half space]\label{liouvilletheoremzero}
Let $\rho\in L^1_\loc(\R)$ be such that \begin{enumerate}
  \item $\rho(y)>0$ for every $y>0$;
  \item there exist $a>-1$ and $C>0$ such that
  $$
  \rho(y)\leq C(1+y^a),\quad\mbox{for every }y\in [0,+\infty).
  $$
\end{enumerate}
Let $w \in H^1_\loc(\overline{\R^n_+},\rho dz)$ be a solution to
 \begin{equation}\label{evenrho0}
 \begin{cases}
 \mathrm{div}\left(\rho\nabla w\right)=0 & \mathrm{in} \ \R^n_+\\
 \lim_{y\to0^+}\rho \,\partial_yw=0 & \mathrm{on} \  \Sigma,
 \end{cases}
\end{equation}
such that for some $C,\gamma>0$
\begin{equation}\label{liouv.condition}
|w(z)|\leq C(1+|z|)^\gamma.
\end{equation}
Then if $\gamma\in[0,2)$ the function $w$ is affine and does not depend on $y$. 
Moreover, if $\gamma\in[0,1)$ then the function $w$ is constant. 
\end{Theorem}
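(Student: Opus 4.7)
The plan is to exploit the $x$-translation invariance of the equation (since $\rho = \rho(y)$): tangential derivatives of any order solve the same problem, and iterating Caccioppoli eventually kills any sufficiently high tangential derivative of $w$. This forces $w$ to be polynomial in $x$, and the $y$-dependence of its coefficients is then classified by a one-dimensional ODE read off the equation and the natural boundary condition.

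\emph{Step 1 (tangential regularity).} For each $h \in \R$ and $i = 1, \dots, n-1$, the translation $w(\cdot + he_i, \cdot)$ is a weak solution of \eqref{evenrho0} (since $\rho$ is $x$-independent), hence so is the finite difference $D_h^i w = h^{-1}(w(\cdot + he_i, \cdot) - w)$. A weighted Caccioppoli estimate, obtained by testing the equation against $w\eta^2$ with a smooth radial cutoff $\eta$ that need not vanish on $\Sigma$ (the natural BC admits such test functions), yields uniform-in-$h$ control of $D_h^i w$ in $H^1_\loc(\rho)$; passing to the limit, $\partial_{x_i} w \in H^1_\loc(\rho)$ satisfies \eqref{evenrho0} with the same natural BC. By induction this holds for every tangential multi-index $\partial_x^\alpha w$.

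\emph{Step 2 (Caccioppoli iteration).} Iterating Caccioppoli $|\alpha|$ times on nested dyadic balls gives, for every $r \geq 1$,
\begin{equation*}
\int_{B_r^+} \rho\, |\partial_x^\alpha w|^2 \;\leq\; \frac{C_{|\alpha|}}{r^{2|\alpha|}} \int_{B_{2r}^+} \rho\, w^2 \;\leq\; C_{|\alpha|}\, r^{n + a^+ + 2\gamma - 2|\alpha|},
\end{equation*}
where the second inequality uses the growth of $w$ together with $\int_{B_R^+} \rho \leq C R^{n+a^+}$ (coming from hypothesis (2)). Since the left-hand side is monotone nondecreasing in $r$, choosing $|\alpha|$ strictly greater than $\gamma + (n+a^+)/2$ and letting $r \to \infty$ forces $\partial_x^\alpha w \equiv 0$ a.e.\ in $\R^n_+$. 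Hence $w(x, y) = \sum_{|\beta| < |\alpha|} c_\beta(y)\, x^\beta$, and the growth bound $|w(x,y)| \leq C(1+|z|)^\gamma$ caps the $x$-degree at $\lfloor \gamma \rfloor$: $w = c_0(y)$ when $\gamma < 1$, and $w = a(y) \cdot x + b(y)$ when $\gamma < 2$.

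\emph{Step 3 (classification).} Plugging these forms into $\mathrm{div}(\rho \nabla w) = 0$ and separating powers of $x$ yields the weak ODEs $(\rho a_i')' = 0$ and $(\rho b')' = 0$ on $(0, \infty)$; integrating once, $\rho a_i' \equiv C_i$ and $\rho b' \equiv D$ for constants $C_i, D$. The natural BC $\lim_{y \to 0^+}\rho\, \partial_y w = 0$, applied to $\rho\, \partial_y w = C \cdot x + D$ for all $x$, forces $C_i = D = 0$. Since $\rho > 0$ on $(0, \infty)$, both $a$ and $b$ are constant; hence $w = a_0 \cdot x + b_0$ is affine and $y$-independent. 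For $\gamma < 1$ the growth bound additionally rules out $a_0 \neq 0$, leaving $w$ constant. The delicate point in this plan is Step 1: one must verify that the natural Neumann condition is preserved under tangential finite-differencing when $\rho$ is merely $L^1_\loc$, which ultimately follows from the stability of the admissible test-function space under $x$-translation.
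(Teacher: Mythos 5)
Your proposal is correct and follows essentially the same route as the paper: Caccioppoli on $w\eta^2$, translation invariance of the weight $\rho=\rho(y)$ to make $\partial_{x_i}w$ a solution, iterated decay of tangential derivatives in weighted $L^2$ to force $w$ to be polynomial in $x$ with $y$-dependent coefficients, degree capped by the growth bound, and finally classification of the coefficients via the equation and the natural boundary condition. The only cosmetic difference is in the last step: you write out the one-dimensional ODE $(\rho a_i')'=0$, integrate once, and use the Neumann condition to kill the integration constant, whereas the paper abstracts this into the preliminary observation that any $y$-only solution of \eqref{evenrho0} must be constant and applies it first to $\partial_{x_i}w=a_i(y)$ and then to the remainder $a_n(y)$; the two are the same argument.
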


\subsection*{Structure of the paper}
The paper is organized as follows: in Section \ref{Section2} we prove some general regularity results for solutions to elliptic equations
with coefficients which are degenerate/singular on a characteristic hyperplane or a curved characteristic manifold. Ultimately, we prove the Schauder estimate presented in Theorem \ref{curvedschauder}. Then, in Section \ref{Section3} we address two specific applications of the theory developed in Section \ref{Section2}. First, in Subsection \ref{subs:HOBH} we prove the higher order boundary Harnack principle of \cite{DesSav} through the auxiliary degenerate equation; that is, Theorem \ref{HOBHP} and then as a consequence we obtain also Theorem \ref{schauderR(u)}. Then, in Subsection \ref{sub.n2} we deal with regularity for the ratio near singular zeros in dimension $n=2$ and we prove Theorem \ref{teoBnodal} and Theorem \ref{teodim2}. Finally, in Section \ref{Section4} we prove the Liouville Theorem \ref{liouvilletheoremzero}.

\section{Schauder estimates for equations degenerating on a hypersurface}\label{Section2}
In this Section we are going to prove some regularity results for solutions to elliptic equations with coefficients which are degenerate or singular on a characteristic manifold. In what follows, avoiding some details, we will refer to the definitions and results contained in \cite{SirTerVit1,SirTerVit2}. We invite the reader who is interested in deepening the knowledge of this class of degenerate or singular equations to the reading of the papers mentioned above and the references therein.

\subsection{Gradient estimates from one side of the characteristic hyperplane}\label{sub.flat}

By a change of coordinates, we can always flatten the regular boundary $\Gamma$ and  reduce  to the case when $\Omega^+$ is the half unit ball $B_1^+=B_1\cap\{z=(x,y)\in\R^{n-1}\times\R\;,y>0\}$ and $\Gamma$ is the characteristic hyperplane  $\Sigma=\{y=0\}$. In the next Subsections we then extend the estimates for the flat case to curved manifolds. Our first achievement concerns gradient estimates up to the flat boundary for energy solutions to
\begin{equation}\label{evenLa}
-\mathrm{div}\left(y^aA\nabla u\right)=y^af+\mathrm{div}\left(y^aF\right)\qquad\mathrm{in \ }B_1^+,
\end{equation}
for $a>-1$, under a Neumann formal boundary condition as detailed earlier,
\begin{equation}\label{evenBC}
\lim_{y\to0^+}y^a(A\nabla u+F)\cdot\nu=0,
\end{equation}
where the outward unit normal vector on $\Sigma$ is $\nu=-\vec e_{n}$. Solutions to \eqref{evenLa} have to be understood as functions belonging to $H^{1,a}(B_1^+)$ satisfying the following weak formulation
\begin{equation*}
\int_{B_1^+}y^aA\nabla u\cdot\nabla\phi=\int_{B_1^+}y^a(f\phi-F\cdot\nabla\phi)
\end{equation*}
for every test function $\phi\in C^{\infty}_c(B_1)$. As we have already remarked in the Introduction, when $a\geq1$ then we can consider test functions only in $C^{\infty}_c(B_1\setminus\Sigma)$, and this is due to the strong degeneracy of the weight term in the superdegenerate setting.

\begin{Theorem}[Gradient estimates up to the characteristic hyperplane $\Sigma$]\label{EVENwithoutHA}
Let $a>-1$, $a^+=\max\{a,0\}$, $p>n+a^+$, $\alpha\in(0,1-\frac{n+a^+}{p}]$, $F,A\in C^{0,\alpha}(B_1^+)$, $f\in L^p(B_1^+,y^adz)$. Then, any energy solution $u$ to \eqref{evenLa} belongs to $C^{1,\alpha}(B_r^+)$ for any $r\in(0,1)$. Moreover, if $\|A\|_{C^{0,\alpha}(B_1^+)}\leq L$, then for $r\in(0,1)$ and $\beta>1$, there exists $C>0$ depending only on $n,\lambda,\Lambda,a,p,\alpha,r,\beta$ and $L$ such that
\begin{equation*}
\|u\|_{C^{1,\alpha}(B_r^+)}\leq C\left(\|u\|_{L^\beta(B_1^+,y^adz)}+\|f\|_{L^p(B_1^+,y^adz)}+\|F\|_{C^{0,\alpha}(B_1^+)}\right),
\end{equation*}
and the estimate is $\varepsilon$-stable in the sense of Remark \ref{epsstability}. Moreover, solutions satisfy the following boundary condition
\begin{equation}\label{extremality}
(A\nabla u+F)\cdot\vec e_{n}=0\qquad\mathrm{on} \ \Sigma.
\end{equation}
We remark also that the estimate above holds for any $\alpha\in(0,1)$ if $p=\infty$.
\end{Theorem}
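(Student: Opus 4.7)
The plan is to establish the Campanato-type decay
\[
\inf_{L} \|u-L\|_{L^\infty(B_r(z_0)\cap B_1^+)} \leq C\,r^{1+\alpha}
\]
uniformly in $z_0\in\overline{B_{1/2}^+}$ and $r>0$ small, where $L$ ranges over affine functions satisfying $(A(z_0)\nabla L+F(z_0))\cdot\vec e_n=0$. This yields both the $C^{1,\alpha}$ estimate by the standard Campanato characterization and the pointwise Neumann condition \eqref{extremality} as a by-product. The strategy is a De Giorgi–Campanato iteration driven by a one-step flatness-improvement lemma, in turn established by a blow-up/compactness contradiction argument in which the limit profile is classified via the Liouville Theorem \ref{liouvilletheoremzero}.

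First I would prove the flatness improvement: for every $\eta>0$ there is $\delta>0$ such that, under the data normalisation $\|u\|_{L^\beta(B_1^+,y^adz)}+\|f\|_{L^p(B_1^+,y^adz)}+\|F\|_{C^{0,\alpha}}\leq\delta$ and $[A]_{C^{0,\alpha}}\leq\delta$, the solution $u$ is $\eta$-close in $L^\infty(B_{1/2}^+)$ to a solution $v$ of the frozen homogeneous equation $\mathrm{div}(y^aA(0)\nabla v)=0$ in $\R^n_+$ with the natural boundary condition $\lim_{y\to 0^+}y^aA(0)\nabla v\cdot\vec e_n=0$. This is obtained by contradiction: a sequence $u_k$ of counterexamples satisfies uniform $H^{1,a}$ energy and Caccioppoli estimates, and the compactness results developed in \cite{SirTerVit1,SirTerVit2} allow passage to the limit; the weaker hypothesis that $A$ is merely continuous enters only through uniform continuity, which is what compactness needs.

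Next, after an affine change of variables tangential to $\Sigma$ that sends $A(0)$ to the identity, $v$ solves $\mathrm{div}(y^a\nabla v)=0$ on a half-space with at most linear growth. Theorem \ref{liouvilletheoremzero} with $\gamma<2$ then forces $v$ to be an $x$-only affine function; undoing the change of variables gives an affine $L$ with $A(0)\nabla L\cdot\vec e_n=0$, and combining with the $\eta$-approximation yields $\|u-L\|_{L^\infty(B_\theta^+)}\leq\theta^{1+\alpha}$ for a small, universal $\theta$. Iterating this at every $z_0\in\overline{B_{1/2}^+}$ on dyadic scales $\theta^k$ produces a Cauchy sequence of affine approximants $L_k$ whose increments decay geometrically; the contributions of $F$ and $A$ decay via their Hölder moduli, while $f$ contributes via the scale-invariant estimate $r^{-n-a}\|f\|_{L^1(B_r^+,y^adz)}\lesssim r^{\alpha-1}$ obtained from Hölder's inequality exactly at the endpoint $\alpha\leq 1-(n+a^+)/p$. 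Summing the increments gives the $C^{1,\alpha}$ bound, and the limit affine function $L_\infty$ inherits the pointwise boundary condition, proving \eqref{extremality}.

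The main obstacle will be the compactness step in the flatness lemma, specifically identifying the limit $v$ as a genuine solution of the frozen homogeneous equation together with the natural Neumann condition. This is particularly delicate in the superdegenerate regime $a\geq 1$, where traces on $\Sigma$ do not exist and the boundary condition must be read only through the weighted flux; one must show that the approximating $x$-only affine function extracted from Theorem \ref{liouvilletheoremzero} captures the correct normal direction via $A(0)$, without access to a pointwise trace argument. A secondary technical point is to propagate the $\varepsilon$-stability of the estimates through the iteration, so that the constant $C$ in the final bound depends only on the listed quantities and not on finer features of the coefficients.
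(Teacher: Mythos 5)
Your route is genuinely different from the paper's: you propose a Caffarelli--Campanato one-step flatness improvement iterated on dyadic scales, whereas the paper performs a single Simon-style contradiction blow-up directly on the $C^{1,\alpha}$ seminorm (of the regularized solutions), with two cases according to whether $y_k/r_k$ stays bounded, and reads off the answer from the Liouville theorem at the blow-up limit. Both templates can work for Schauder, and yours would in principle buy a more local, pointwise-in-$z_0$ statement. However, as written there is a gap precisely at the point where you invoke the Liouville theorem. In the compactness step that proves the flatness-improvement lemma, the sequence $u_k$ lives on the fixed domain $B_1^+$ after a mere normalisation of data, so the limit $v$ is a solution of the frozen equation only on a bounded half-ball, not on $\R^n_+$, and it has no growth rate at infinity to speak of. Theorem~\ref{liouvilletheoremzero} classifies \emph{entire} solutions and cannot be applied to $v$ directly. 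What the Campanato iteration actually needs at this point is a $C^{1,\alpha'}$ (for some $\alpha'>\alpha$) boundary regularity estimate for the local model equation $\mathrm{div}(y^a A(0)\nabla v)=0$ with the natural Neumann condition; obtaining that is itself a nontrivial statement in the superdegenerate regime, and if you obtain it by a further blow-up + Liouville argument you have essentially reproduced the paper's proof. By contrast, the paper's blow-up is taken along shrinking scales $r_k\to 0$ so that the rescaled domains exhaust $\R^n$ or $\R^n_+$, making the limit genuinely entire and the Liouville theorem applicable; moreover their argument splits into the interior case and the boundary case, a dichotomy your Campanato scheme must also address when $z_0$ is close to but not on $\Sigma$.

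Two further points are worth making explicit rather than relegating to "secondary technical issues." First, the $\varepsilon$-regularisation by $\rho_\varepsilon^a(y)=(\varepsilon^2+y^2)^{a/2}$ is not merely a stability remark: it is the mechanism through which the $\varepsilon=0$ estimate is actually \emph{proved} (uniform-in-$\varepsilon$ estimates, then an approximation argument), so it must be built into whichever iteration you run, not appended at the end. Second, in the boundary case the paper must carefully show that the term involving $A(\hat z_k)\nabla u_k(\hat z_k)+F_{\varepsilon_k}(\hat z_k)$ in the rescaled weak formulation vanishes, using the Neumann boundary condition of the $\varepsilon$-problem in Case 2 and a quantitative estimate involving $(r_k/y_k)^{1-\alpha}$ in Case 1; the analogue of this computation does not appear in your sketch and is where the removal of the $A$-invariance hypothesis from \cite{SirTerVit1} actually happens.
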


Theorem \ref{EVENwithoutHA} generalizes the gradient estimates proved in \gio{\cite[Theorem 1.2]{SirTerVit1} and \cite[Theorem 1.3]{SirTerVit1}} by removing the assumption $\Sigma$ being $A$-invariant. Notice that, in order to provide gradient estimates for degenerate or singular problems from one side and up to the characteristic manifold $\Sigma=\{y=0\}$, no structural assumption on the variable coefficient matrix is needed. Moreover, one does not need to require that the last component of the field vanishes on $\Sigma$; that is,
\begin{equation}\label{fieldzero}
F(x,0)\cdot\vec e_{n}=F_{n}(x,0)=0.
\end{equation}
In fact, one can work without performing any even reflection across $\Sigma$. This is the reason why we will not refer anymore to solutions to \eqref{evenLa} using the expression \emph{even solutions}.

Here we would like to prove that actually \gio{\cite[Theorems 1.2 and 1.3]{SirTerVit1}} hold true without requiring condition \eqref{fieldzero} on the field $F$ and the structural assumption of invariance of $\Sigma$ with respect to $A$; that is, the existence of a scalar function $\mu$ such that
\begin{equation}\label{invariance}
A(x,0)\vec e_{n}=\mu(x,0)\vec e_{n}.
\end{equation}

\begin{remark}\label{epsstability}
In the statement of Theorem \ref{EVENwithoutHA}, when we say that the gradient estimate is $\varepsilon$-stable we mean that the constant in the estimate is also uniform with respect to a $\varepsilon$-perturbation of the problem with the uniformly elliptic weight terms $\rho_\varepsilon^a(y)=(\varepsilon^2+y^2)^{a/2}$ for $\varepsilon<<1$; that is, for solutions to
\begin{equation}\label{evenLaeps}
\begin{cases}
-\mathrm{div}\left(\rho_\varepsilon^aA\nabla u_\varepsilon\right)=\rho_\varepsilon^af_\varepsilon+\mathrm{div}\left(\rho_\varepsilon^aF_\varepsilon\right) &\mathrm{in \ } B_1^+\\
\rho_\varepsilon^a(A\nabla u_\varepsilon+F_\varepsilon)\cdot\vec e_{n}=0 &\mathrm{on \ } B'_1.
\end{cases}
\end{equation}
Actually, stability of the estimate for regularized problems and an approximation argument is how regularity is proven for the case $\varepsilon=0$ (for further details see \gio{\cite[Section 2]{SirTerVit1}}).
\end{remark}

In what follows we are going to prove Theorem \ref{EVENwithoutHA}, whose validity strongly relies on the Liouville Theorem \ref{liouvilletheoremzero} (see also \cite[Corollary 3.5]{SirTerVit1}).

\begin{proof}[Proof of Theorem \ref{EVENwithoutHA}]
In order to prove the result, we follows the strategies developed in the proofs of \gio{\cite[Theorems 5.1 and 5.2]{SirTerVit1}}; that is, uniform estimates for the $\varepsilon$-regularized problems, showing how to work without conditions \eqref{fieldzero} and \eqref{invariance}. Then the result holds for the case $\varepsilon=0$ by approximation working exactly as in \cite{SirTerVit1}. One can proceed by a contradiction argument  inspired by \cite{SoaTer10} in order to prove the $\varepsilon$-uniform $C^{1,\alpha}$ estimates, without performing any reflection across $\Sigma$. Hence, along a sequence $\varepsilon_k\to0$ there exist $a>-1$, $p>n+a^+$, $\beta>1$, $\alpha\in (0,1-\frac{n+a^+}{p}]$, $r\in(0,1)$ and a sequence of solutions $u_k=u_{\varepsilon_k}$ to \eqref{evenLaeps} such that uniformly
$$\|u_k\|_{L^\beta(B_1^+,\rho_{\varepsilon_k}^a(y)dz)}+\|f_{\varepsilon_k}\|_{L^p(B_1^+,\rho_{\varepsilon_k}^a(y)dz)}+\|F_{\varepsilon_k}\|_{C^{0,\alpha}(B_1^+)}\leq c$$
and
$$\|u_k\|_{C^{1,\alpha}(B_r^+)}\to+\infty.$$
In other words, one has that, for a radially decreasing cut-off function $\eta\in C^\infty_c(B_1)$ with $0\leq\eta\leq1$, $\eta\equiv1$ in $B_r$ and $\mathrm{supp}\eta=B_{\frac{1+r}{2}}=:B$, for two sequences of points $z_k,\zeta_k\in B\cap\{y\geq0\}$ and a partial derivative $i\in\{1,...,n\}$
\begin{equation*}
\frac{|\partial_i(\eta u_k)(z_k)-\partial_i(\eta u_k)(\zeta_k)|}{|z_k-\zeta_k|^\alpha}=L_k\to+\infty.
\end{equation*}
Now we want to define two blow up sequences: let $r_k=|z_k-\zeta_k|$, $\frac{1+r}{2}<\overline r<1$ and
\begin{equation*}
v_k(z)=\frac{\eta(\hat z_k+r_kz)}{L_kr_k^{1+\alpha}}\left(u_k(\hat z_k+r_kz)-u_k(\hat z_k)\right),\qquad w_k(z)=\frac{\eta(\hat z_k)}{L_kr_k^{1+\alpha}}\left(u_k(\hat z_k+r_kz)-u_k(\hat z_k)\right),
\end{equation*}
for $z\in B(k):=\frac{B^+_{\overline r}-\hat z_k}{r_k}$ and $\hat z_k\in B\cap\{y\geq0\}$ to be determined. In any case $\hat z_k=(x_k,\hat y_{k})$ where $z_k=(x_k,y_{k})$. Let $B(\infty)=\lim_{k\to+\infty}B(k)$. There are two different cases; that is,
\begin{itemize}
\item[{\bf Case 1 :}] the term $\frac{y_{k}}{r_k}$ is unbounded, then we choose $\hat z_{k}=z_k$;
\item[{\bf Case 2 :}] the term $\frac{y_{k}}{r_k}$ is bounded, then we choose $\hat y_{k}=0$. In other words, $\hat z_k=(x_k,0)$.
\end{itemize}
In {\bf Case 1}, since points $z_k$ lie on a compact set, then we already know that $r_k\to0$. The fact that $r_k\to0$ in {\bf Case 2} has to be proved after suitably adjusting the blow-up sequences: this helps also to have the sequence of derivatives uniformly bounded at a point, in order to apply the Ascoli-Arzel\'a convergence theorem. Such adjustment consists in subtracting a linear term
$$\overline v_k(z)=v_k(z)-\nabla v_k(0)\cdot z,\qquad\overline w_k(z)=w_k(z)-\nabla w_k(0)\cdot z.$$
Avoiding further details, the adjusted blow-up sequences converge both on compact subsets of the limit blow-up set to the same entire profile $w$ which is non constant, has non constant gradient and is globally $C^{1,\alpha}$ (actually the convergence on compact sets is in $C^{1,\gamma}$ for $\gamma<\alpha$). Moreover, $\overline w_k$ solves the following rescaled equations, for any $C^\infty_c$-function of the blow-up domain
\begin{eqnarray*}
&&\int_{\mathrm{supp}\phi}\rho_{\varepsilon_k}^a(\hat y_k+r_ky)A(\hat z_k)\nabla\overline w_k(z)\cdot\nabla\phi(z)=\frac{\eta(\hat z_k)}{L_kr_k^\alpha}r_k\int_{\mathrm{supp}\phi}\rho_{\varepsilon_k}^a(\hat y_k+r_ky)f_{\varepsilon_k}(\hat z_k+r_kz)\phi(z)\\
&&-\frac{\eta(\hat z_k)}{L_kr_k^\alpha}\int_{\mathrm{supp}\phi}\rho_{\varepsilon_k}^a(\hat y_k+r_ky)[F_{\varepsilon_k}(\hat z_k+r_kz)-F_{\varepsilon_k}(\hat z_k)]\cdot\nabla\phi(z)\\
&&+\int_{\mathrm{supp}\phi}\rho_{\varepsilon_k}^a(\hat y_k+r_ky)[A(\hat z_k)-A(\hat z_k+r_kz)]\nabla w_k(z)\cdot\nabla\phi(z)\\
&&-\frac{\eta(\hat z_k)}{L_kr_k^\alpha}\int_{\mathrm{supp}\phi}\rho_{\varepsilon_k}^a(\hat y_k+r_ky)[A(\hat z_k)\nabla u_k(\hat z_k)+F_{\varepsilon_k}(\hat z_k)]\cdot\nabla\phi(z).
\end{eqnarray*}
In {\bf Case 1} the limiting blow-up domain is $\R^{n}$ and hence $\mathrm{supp}\phi\subset B_R$ for some $R>0$. In {\bf Case 2} instead, the limiting blow-up domain is $\R^{n}_+$ and hence $\mathrm{supp}\phi\subset B_R\cap \{y\geq0\}$. The first term in the right hand side vanishes using the uniform boundedness of $f_{\varepsilon_k}$ in the suitable weighed Lebesgue spaces, while the second and third terms vanish using boundedness of coefficients and field terms in $C^{0,\alpha}$ (for the third term see also \cite[Remark 5.3]{SirTerVit1}). In order to prove that also the last term vanishes, let us remark that
\begin{eqnarray*}
&&\int_{\mathrm{supp}\phi}\rho_{\varepsilon_k}^a(\hat y_k+r_ky)[A(\hat z_k)\nabla u_k(\hat z_k)+F_{\varepsilon_k}(\hat z_k)]\cdot\nabla\phi(z)=\\
&&\int_{\mathrm{supp}\phi}\mathrm{div}\left(\rho_{\varepsilon_k}^a(\hat y_k+r_ky)[A(\hat z_k)\nabla u_k(\hat z_k)+F_{\varepsilon_k}(\hat z_k)]\phi(z)\right)\\
&&-\int_{\mathrm{supp}\phi}\partial_y\left(\rho_{\varepsilon_k}^a(\hat y_k+r_ky)\right)[A(\hat z_k)\nabla u_k(\hat z_k)+F_{\varepsilon_k}(\hat z_k)]\cdot\vec e_{n}\phi(z).
\end{eqnarray*}
Hence, using the divergence theorem, the first term in the right hand side is zero in both {\bf Case 1} and {\bf Case 2}. In the second case, since $\hat z_k$ lies on the flat boundary, then
\begin{equation*}
A(\hat z_k)\nabla u_k(\hat z_k)+F_{\varepsilon_k}(\hat z_k)\cdot\vec e_{n}=0.
\end{equation*}
So, also the second term is identically zero in {\bf Case 2}. In order to conclude, we have to deal with the term
$$\frac{\eta(z_k)}{L_kr_k^\alpha}\int_{\mathrm{supp}\phi}\partial_y\left(\rho_{\varepsilon_k}^a(y_k+r_ky)\right)[A(z_k)\nabla u_k(z_k)+F_{\varepsilon_k}(z_k)]\cdot\vec e_{n}\phi(z)$$
in {\bf Case 1} ($\hat z_k=z_k=(x_k,y_k)$). Let us define $\xi_k=P_\Sigma(z_k)=(x_k,0)$ the projections on $\Sigma$. The term $|\nu_k^{-a}\partial_y(\rho_{\varepsilon_k}^a(\hat y_k+r_ky))|$ can be controlled from above on compact sets by $Cr_k/y_k$. Then, adding and subtracting $\eta(\xi_k)[A(\xi_k)\nabla u_k(\xi_k)+F_{\varepsilon_k}(\xi_k)]=0$ one has
$$|\eta A\nabla u_k(z_k)-\eta A\nabla u_k(\xi_k)|\leq |A\nabla(\eta u_k)(z_k)- A\nabla(\eta u_k)(\xi_k)|+|u_k A\nabla \eta(z_k)-u_k A\nabla \eta(\xi_k)|\leq CL_ky_k^\alpha.$$
Eventually, one can estimate the full term as
\begin{equation*}
\left|\frac{\eta(z_k)\nu_k^{-a}}{L_kr_k^\alpha}\int_{\mathrm{supp}\phi}\partial_y(\rho_{\varepsilon_k}^a(\hat y_k+r_ky))[A(\hat z_k)\nabla u_k(\hat z_k)+F_{\varepsilon_k}(\hat z_k)]\cdot\vec e_{n}\phi(z)dz\right|\leq C\left(\frac{r_k}{y_k}\right)^{1-\alpha}\to0.
\end{equation*}
Then, in {\bf Case 1} one can prove that the limit is an entire solution to
\begin{equation*}
\mathrm{div}\left(\hat A\nabla w\right)=0 \qquad\mathrm{in \ } \R^{n}
\end{equation*}
and in {\bf Case 2} to
\begin{equation*}
-\mathrm{div}\left(\rho(y) \hat A\nabla w\right)=0 \qquad\mathrm{in \ } \R^{n}_+=\{y>0\}
\end{equation*}
with ``homogeneous Neumann boundary condition" on $\Sigma$, in the sense that $w\in H^1_\loc(\overline{\R^{n}_+},\rho(y)dz)$ solves weakly
\begin{equation*}
\int_{\R^{n}_+}\rho(y)\hat A\nabla w\cdot\nabla\phi=0
\end{equation*}
for all test functions $\phi\in C^\infty_c(\R^{n})$ (when $a\geq1$ test functions can be taken in $C^\infty_c(\R^{n}\setminus\Sigma)$).
The weight term $\rho(y)$ is either $1$, $y^a$ or $(1+y^2)^{a/2}$, and the limit matrix $\hat A$ possesses constant coefficients and is symmetric positive definite. Let us consider the square root $C=\hat A^{1/2}$ of $\hat A$, which is symmetric and positive definite too. We remark that the linear transformation associated to the inverse of such a matrix maps $\R^{n}$ in itself, and in case the blow-up limit set is $\R^{n}_+=\{y>0\}$, then it maps such half space in another half space. In fact, with the change of variable $C \zeta=z$, then
$$\{y>0\}=\{z\cdot \vec e_{n}>0\}=\{C\zeta\cdot \vec e_{n}>0\}=\{\zeta\cdot C\vec e_{n}>0\},$$
which is a half space also in the new coordinate system, with boundary hyperplane given by $\Sigma'=\{\zeta\cdot C\vec e_{n}=0\}$. In other words, the new outward normal vector is related to the old one by the following formula $\nu'(\zeta)=C\nu(C \zeta)$, and in our case is the constant vector $\nu'=-C\vec e_{n}$.

Let $v(\zeta)=w(C \zeta)$. Then, up to dilations, the function $v$ is a weak solution of
\begin{equation*}
-\mathrm{div}\left(\rho'(\zeta)\nabla v\right)=0 \qquad\mathrm{in \ } \{\zeta\cdot C\vec e_{n}>0\}
\end{equation*}
with ``homogeneous Neumann boundary condition" on $\Sigma'$, in the sense that $v$ belongs to the space $H^1_\loc(\overline{\{\zeta\cdot C\vec e_{n}>0\}},\rho'(\zeta)d\zeta)$ and solves weakly
\begin{equation*}
\int_{\{\zeta\cdot C\vec e_{n}>0\}}\rho'(\zeta)\nabla v\cdot\nabla\phi=0
\end{equation*}
for all test functions $\phi\in C^\infty_c(\R^{n})$  (when $a\geq1$ test functions can be taken in $C^\infty_c(\R^{n}\setminus\Sigma')$), where $\rho'(\zeta)$ is either $1$, $\mathrm{dist}(\zeta,\Sigma')^a$ or $(1+\mathrm{dist}(\zeta,\Sigma')^2)^{a/2}$. The global $C^{1,\alpha}$ regularity implies a subquadratic growth at infinity
$$|v(z)|\leq |v(z)-v(0)-\nabla v(0)\cdot z|+|v(0)|+|\nabla v(0)|\, |z|\leq C(1+|z|)^{1+\alpha}.$$
Hence, one can now apply the suitable Liouville type theorem which brings to a contradiction; that is, the classical Liouville theorem for harmonic functions with polynomial growth in {\bf Case 1} or Theorem \ref{liouvilletheoremzero} in {\bf Case 2}. In any case, $v$ should be a linear function, in contradiction with the fact that it possesses non constant gradient.

Finally, let us remark that the boundary condition \eqref{extremality} comes from the local $C^{1}$ convergence of the regularized solutions to the limit one in the approximation scheme.
\end{proof}

\subsection{Schauder estimates from one side of the characteristic hyperplane}\label{sub2.2}
Aim of the Section is the iteration of the gradient estimate in Theorem \ref{EVENwithoutHA} on the derivatives of the solution. The regularity for tangential derivatives follows by differentiation in the equation, while the regularity for the normal derivative is based on some technical Lemmata contained in what follows.

\subsubsection{Some preliminary results}
The following Lemmata will be crucial for the Schauder estimates in Subsection \ref{subs.Scha}.
\begin{Lemma}\label{lem1}
Let $k\in\mathbb N$, $\alpha\in(0,1]$. Let $f\in C^{k+1,\alpha}(B_1)$ with $f(x,0)=0$. Then $f/y\in C^{k,\alpha}(B_1)$.
\end{Lemma}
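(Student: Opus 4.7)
The plan is to represent $f/y$ as an integral average of a $C^{k,\alpha}$ function, so that all the desired regularity can be read off directly by differentiating under the integral sign.

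First I would use the fundamental theorem of calculus together with the hypothesis $f(x,0)=0$ to write, for $y\neq 0$,
\[
\frac{f(x,y)}{y}=\frac{1}{y}\int_0^y\partial_y f(x,s)\,ds=\int_0^1\partial_y f(x,ty)\,dt,
\]
and I would then \emph{define} $g(x,y):=\int_0^1\partial_y f(x,ty)\,dt$ on all of $B_1$, which automatically extends $f/y$ continuously to $\{y=0\}$ by $\partial_y f(x,0)$. The point is that $\partial_y f\in C^{k,\alpha}(B_1)$ by hypothesis, so the integrand is a nice function of $(x,y,t)$.

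Next I would differentiate under the integral sign: for a multi-index $\beta=(\beta',\beta_n)\in\mathbb N^{n-1}\times\mathbb N$ with $|\beta|\le k$,
\[
\partial_x^{\beta'}\partial_y^{\beta_n}g(x,y)=\int_0^1 t^{\beta_n}\,\partial_x^{\beta'}\partial_y^{\beta_n+1}f(x,ty)\,dt,
\]
which is well defined and continuous since $\partial_x^{\beta'}\partial_y^{\beta_n+1}f\in C^{0,\alpha}(B_1)$ (recall $|\beta|+1\le k+1$). To obtain the Hölder bound, I would estimate for $(x,y),(x',y')\in B_1$
\[
\bigl|\partial^\beta g(x,y)-\partial^\beta g(x',y')\bigr|\le\int_0^1 t^{\beta_n}\,[\partial_x^{\beta'}\partial_y^{\beta_n+1}f]_{C^{0,\alpha}}\bigl(|x-x'|^2+t^2|y-y'|^2\bigr)^{\alpha/2}dt,
\]
and since $t^{\beta_n}(|x-x'|^2+t^2|y-y'|^2)^{\alpha/2}\le(|x-x'|^2+|y-y'|^2)^{\alpha/2}$ for $t\in[0,1]$, the integral is controlled by $\|f\|_{C^{k+1,\alpha}(B_1)}\,|(x,y)-(x',y')|^{\alpha}$. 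This yields $g\in C^{k,\alpha}(B_1)$ with the expected quantitative bound $\|f/y\|_{C^{k,\alpha}(B_1)}\le C\|f\|_{C^{k+1,\alpha}(B_1)}$.

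There is really no hard step here: the lemma is a standard division principle and its proof is essentially the integral representation above. The only subtle point worth flagging is the choice to differentiate first in $y$ rather than factor $y$ out naively; this is what guarantees that after taking up to $k$ further derivatives of $g$ one still lands on derivatives of $f$ of total order at most $k+1$, which are $C^{0,\alpha}$ by hypothesis. Once this bookkeeping is in place, the Hölder estimate follows from a pointwise application of the Hölder seminorm of the top-order derivatives of $f$.
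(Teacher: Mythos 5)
Your proof is correct and follows essentially the same approach as the paper: representing $f/y$ via the integral $\int_0^1\partial_y f(x,ty)\,dt$, differentiating under the integral sign, and then applying the H\"older bound of the top-order derivatives of $f$. You are even slightly more explicit than the paper in tracking the chain-rule factor $t^{\beta_n}$ (which the paper silently absorbs by noting it is $\le 1$), but the argument is the same.
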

\proof
By direct computation
$$f(x,y)=\int_0^y\partial_yf(x,t)dt=y\int_0^1\partial_yf(x,sy)ds.$$
Hence
\begin{equation*}
\frac{f(x,y)}{y}=\int_0^1\partial_yf(x,sy)ds
\end{equation*}
and regularity follows by Leibniz rule. Indeed, if we consider a multiindex $|\beta|=k$, then
\begin{align*}
|D^\beta(f/y)(x_1,y_1)-D^\beta(f/y)(x_2,y_2)|&\leq \int_0^1|D^\beta\partial_yf(x_1,sy_1)-D^\beta\partial_yf(x_2,sy_2)|ds\\
&\leq C|(x_1,y_1)-(x_2,y_2)|^\alpha.
\end{align*}
\endproof

\begin{Lemma}\label{lem2}
Let $a>-1$, $k\in\mathbb N$ and $\alpha\in(0,1]$. Let $g\in C^{k,\alpha}(B_1)$. Then
\begin{equation*}
\varphi(x,y)=\frac{1}{y|y|^a}\int_0^y|t|^ag(x,t)dt
\end{equation*}
belongs to $C^{k,\alpha}(B_1)$.
\end{Lemma}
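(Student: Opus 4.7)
The plan is to reduce Lemma \ref{lem2} to a single integral representation on the fixed interval $[0,1]$, after which regularity will follow from differentiation under the integral sign exactly as in Lemma \ref{lem1}. The only nontrivial point is to handle the sign of $y$ correctly and to track the integrability of the weight.

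First, I would perform the change of variable $t = sy$ with $s \in [0,1]$. If $y > 0$, then $|t|^a = (sy)^a = s^a y^a$ and $|y|^a = y^a$, so
\begin{equation*}
\varphi(x,y) = \frac{1}{y^{a+1}} \int_0^1 s^a y^a g(x,sy)\, y\, ds = \int_0^1 s^a g(x,sy)\, ds.
\end{equation*}
If $y < 0$, the substitution still runs $s$ from $0$ to $1$, and now $|t|^a = (-sy)^a = s^a(-y)^a$ and $|y|^a = (-y)^a$; the factor $(-y)^a$ cancels between numerator and denominator, and the remaining factors of $y$ also cancel, yielding the same expression
\begin{equation*}
\varphi(x,y) = \int_0^1 s^a g(x,sy)\, ds, \qquad y \in (-1,1)\setminus\{0\}.
\end{equation*}
Since $a>-1$, this integral is absolutely convergent and defines a continuous function on the whole $B_1$, extending $\varphi$ through $\{y=0\}$.

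Next I would differentiate under the integral sign. For any multi-index $\beta$ with $|\beta|\leq k$, writing $\beta=(\beta',\beta_n)$ with $\beta_n$ the number of derivatives in the $y$-direction, a chain-rule computation gives
\begin{equation*}
D^\beta \varphi(x,y) = \int_0^1 s^{a+\beta_n}\, (D^\beta g)(x,sy)\, ds.
\end{equation*}
Integrability of $s^{a+\beta_n}$ on $[0,1]$ is automatic because $a+\beta_n > -1$. Boundedness of $D^\beta g$ up to order $k$ therefore implies $\varphi\in C^k(B_1)$ with explicit sup-bounds in terms of $\|g\|_{C^k}$.

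Finally, for the $\alpha$-Hölder seminorm of the top derivatives $|\beta|=k$, I would estimate
\begin{equation*}
|D^\beta \varphi(x_1,y_1) - D^\beta \varphi(x_2,y_2)| \leq [D^\beta g]_{C^{0,\alpha}}\int_0^1 s^{a+\beta_n} \bigl|(x_1-x_2,\, s(y_1-y_2))\bigr|^\alpha\, ds,
\end{equation*}
and use $s\leq 1$ to bound the $y$-component of the increment and pull the difference $|(x_1,y_1)-(x_2,y_2)|^\alpha$ outside the integral, leaving the harmless constant $\int_0^1 s^{a+\beta_n}\, ds$. This yields $\varphi\in C^{k,\alpha}(B_1)$. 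The main obstacle, such as it is, is the slightly delicate bookkeeping of the sign cancellations in the change of variable for $y<0$; once the unified formula $\varphi(x,y)=\int_0^1 s^a g(x,sy)\,ds$ is in hand, the rest is routine.
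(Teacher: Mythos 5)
Your proof is correct, and it takes a genuinely different and more direct route than the paper's. The paper proceeds by induction on $k$: the base case $k=0$ is deferred to an earlier paper of the authors, and the inductive step handles $\partial_{x_i}\varphi$ by pushing the derivative onto $g$ inside the integral, while $\partial_y\varphi$ requires first subtracting the constant $g(x,0)/(a+1)$ and then invoking Lemma~\ref{lem1} to reduce to a statement of the same shape with $a$ replaced by $a+1$ and $g$ replaced by $(g(x,t)-g(x,0))/t$. Your approach instead observes that the change of variable $t=sy$ collapses the whole expression (for both signs of $y$) to the single representation $\varphi(x,y)=\int_0^1 s^a g(x,sy)\,ds$, after which differentiating under the integral sign gives $D^\beta\varphi(x,y)=\int_0^1 s^{a+\beta_n}(D^\beta g)(x,sy)\,ds$ with $s^{a+\beta_n}$ integrable on $[0,1]$ because $a>-1$, and the Hölder bound follows by pulling $s\leq1$ out of the modulus-of-continuity estimate. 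This is essentially the proof the paper gives for the simpler Lemma~\ref{lem1}, promoted to the weighted setting; it is self-contained, avoids both the induction and the reference to the prior paper, and yields the quantitative bound $\|\varphi\|_{C^{k,\alpha}}\leq C(a,k)\|g\|_{C^{k,\alpha}}$ transparently. The only point worth stating explicitly (which you gesture at but could make sharper) is the justification for differentiating under the integral sign: since $g\in C^k$, each $D^\beta g$ with $|\beta|\leq k$ is locally bounded, and $s^{a+\beta_n}$ is integrable, so dominated convergence applies locally uniformly; once that is said, the argument is complete.
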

\proof
We proceed by induction. In the case $k=0$, one can argue as in the proof of \cite[Lemma 7.5]{SirTerVit1} in order to get $\varphi\in C^{0,\alpha}(B_1^+)$ and $\varphi\in C^{0,\alpha}(B_1^-)$. Hence, it remains to prove that $\varphi$ is actually continuous across $\Sigma$, which is true since
$$\lim_{y\to0+}\varphi(x_0,y)=\lim_{y\to0-}\varphi(x_0,y)=\frac{g(x_0,0)}{a+1}.$$
Let us assume that the result is true for a generic integer $k\in\mathbb N$ and let us prove the result for $k+1$. In other words, we claim that $\partial_{x_i}\varphi\in C^{k,\alpha}(B_1)$, for any $i=1,...,n-1$, and $\partial_y\varphi\in C^{k,\alpha}(B_1)$. Here we are assuming $g\in C^{k+1,\alpha}(B_1)$, hence
$$\partial_{x_i}\varphi(x,y)=\frac{1}{y|y|^a}\int_0^y|t|^a\partial_{x_i}g(x,t)dt$$
which belongs to $C^{k,\alpha}(B_1)$ by inductive hypothesis. Then, let us express the function $\varphi$ as
\begin{equation*}
\varphi(x,y)=\frac{1}{y|y|^a}\int_0^y|t|^a(g(x,t)-g(x,0))dt+\frac{g(x,0)}{a+1}.
\end{equation*}
Hence,
\begin{equation*}
\partial_y\varphi(x,y)=-\frac{a+1}{y|y|^{a+1}}\int_0^y|t|^{a+1}\frac{g(x,t)-g(x,0)}{t}dt+\frac{g(x,y)-g(x,0)}{y}.
\end{equation*}
By Lemma \ref{lem1}, $(g(x,y)-g(x,0))/y\in C^{k,\alpha}(B_1)$ and hence we can conclude using again the inductive hypothesis.
\endproof

The previous Lemma immediately implies the following \begin{remark}\label{lem3}
Let $a>-1$, $k\in\mathbb N$ and $\alpha\in(0,1]$. Let $g\in C^{k,\alpha}(B_1)$. Then
\begin{equation*}
\varphi(x,y)=\frac{1}{|y|^a}\int_0^y|t|^ag(x,t)dt
\end{equation*}
possesses partial derivative $\partial_y\varphi\in C^{k,\alpha}(B_1)$.
In fact
$$\partial_y\varphi(x,y)=-\frac{a}{|y|^ay}\int_0^y|t|^ag(x,t)dt+g(x,y).$$
\end{remark}

\subsubsection{Schauder estimates}\label{subs.Scha}

Aim of this Subsection is to prove the following result

\begin{Theorem}[Schauder estimates up to the characteristic hyperplane $\Sigma$]\label{EVENwithoutHA1}
Let $a>-1$, $k\in\mathbb N\setminus\{0\}$, $\alpha\in(0,1)$, $F,A\in C^{k,\alpha}(B_1^+)$, $f\in C^{k-1,\alpha}(B_1^+)$. Then, any energy solution $u$ to \eqref{evenLa} belongs to $C^{k+1,\alpha}(B_r^+)$ for any $r\in(0,1)$. Moreover, if $\|A\|_{C^{k,\alpha}(B_1^+)}\leq L$, then for $r\in(0,1)$ and $\beta>1$, there exists $C>0$ depending only on $n,\lambda,\Lambda,a,k,\alpha,r,\beta$ and $L$ such that
\begin{equation*}
\|u\|_{C^{k+1,\alpha}(B_r^+)}\leq C\left(\|u\|_{L^\beta(B_1^+,y^adz)}+\|f\|_{C^{k-1,\alpha}(B_1^+)}+\|F\|_{C^{k,\alpha}(B_1^+)}\right).
\end{equation*}
Moreover, the solutions satisfy the boundary condition \eqref{extremality} on $\Sigma$.
\end{Theorem}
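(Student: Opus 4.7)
The plan is to argue by induction on $k \geq 1$, using Theorem \ref{EVENwithoutHA} as the base case. At each step I recover the $C^{k+1,\alpha}$ regularity first along the directions tangent to $\Sigma$, by differentiating the equation, and then along the normal direction, by isolating $\partial_{yy} u$ from the PDE and controlling the singular flux term with an integral formula.

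\textbf{Tangential step.} Since $\partial_{x_i}(y^a)=0$ for $i<n$, differentiating \eqref{evenLa} in $x_i$ yields
\begin{equation*}
-\mathrm{div}\bigl(y^a A \nabla \partial_{x_i} u\bigr) = \mathrm{div}\bigl(y^a ( f\vec e_i + \partial_{x_i} F + (\partial_{x_i} A)\nabla u)\bigr)\qquad \text{in } B_1^+,
\end{equation*}
so $\partial_{x_i} u$ solves an equation of the same form, with matrix $A$, zero forcing, and field $\widetilde F_i := f\vec e_i + \partial_{x_i} F + (\partial_{x_i} A)\nabla u$. Differentiating the Neumann condition \eqref{extremality} tangentially produces the compatible condition $(A\nabla \partial_{x_i} u + \widetilde F_i)\cdot \vec e_n = 0$ on $\Sigma$. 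Under the inductive hypothesis $u\in C^{k,\alpha}$ on an intermediate ball, hence $\widetilde F_i\in C^{k-1,\alpha}$; applying the inductive hypothesis again to $\partial_{x_i} u$ yields $\partial_{x_i} u\in C^{k,\alpha}$ on a slightly smaller ball, for every $i<n$. This alone delivers $C^{0,\alpha}$ control on every mixed derivative of $u$ of total order $\le k+1$ that carries at least one tangential index.

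\textbf{Normal step.} Expanding the divergence in \eqref{evenLa}, setting $V := A\nabla u + F$ and isolating $\partial_{yy} u$, one obtains
\begin{equation*}
a_{nn}\,\partial_{yy} u = -f - \mathrm{div}(F) - \sum_{(i,j)\neq(n,n)} a_{ij}\,\partial^2_{ij} u - \sum_{i,j} (\partial_i a_{ij})\,\partial_j u - \frac{a}{y}\,V_n,
\end{equation*}
and all summands on the right apart from $V_n/y$ lie in $C^{k-1,\alpha}$ by the tangential step and the inductive hypothesis. The term $V_n/y$ is treated by means of the equation itself: rewriting $\mathrm{div}(y^a V)=-y^a f$ as $\partial_y(y^a V_n)=-y^a(f+\mathrm{div}_x V')$ with $V'=(V_1,\dots,V_{n-1})$, and using the Neumann boundary condition $V_n=0$ on $\Sigma$ from Theorem \ref{EVENwithoutHA}, integration from $0$ to $y$ yields
\begin{equation*}
V_n(x,y) = -\frac{1}{y^a}\int_0^y t^a g(x,t)\,dt,\qquad g := f+\mathrm{div}_x V'.
\end{equation*}
The tangential step gives $g\in C^{k-1,\alpha}$, and then Lemma \ref{lem2} delivers $V_n/y\in C^{k-1,\alpha}$. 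Consequently $\partial_{yy} u\in C^{k-1,\alpha}$, and therefore $\partial_y^{k+1} u=\partial_y^{k-1}(\partial_{yy}u)\in C^{0,\alpha}$; together with the tangential step this yields $u\in C^{k+1,\alpha}(B_r^+)$, while the boundary condition on $\Sigma$ is inherited from Theorem \ref{EVENwithoutHA}.

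\textbf{Quantitative estimate and main difficulty.} The Schauder bound follows by tracking the constants from Theorem \ref{EVENwithoutHA} at each induction step and interpolating on a finite chain of nested balls $B_r^+\subset B_{r_1}^+\subset\cdots\subset B_1^+$ with a controlled loss of radius; the $L^\beta$ norm of $\partial_{x_i} u$ on an intermediate ball is absorbed into $\|u\|_{L^\beta(B_1^+,y^a dz)}$ via the gradient estimate at the previous level. The technical heart of the argument is the integral representation of $V_n$ and the regularity $V_n/y\in C^{k-1,\alpha}$ coming from Lemma \ref{lem2}: this is what converts the apparently singular term $a V_n/y$ in the PDE into a genuine $C^{k-1,\alpha}$ right-hand side, and it is the one place where we crucially exploit that $u$ is a solution of \eqref{evenLa} rather than an arbitrary $C^{k,\alpha}$ function.
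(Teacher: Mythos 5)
Your proof is correct and follows essentially the same strategy as the paper: induction on $k$, regularity of tangential derivatives by differentiating the equation, and control of the normal second derivative by isolating $\partial_{yy}u$ and representing the normal flux $V_n=(A\nabla u+F)\cdot\vec e_n$ via the ODE identity $y^{-a}\partial_y(y^aV_n)=-g$ combined with Lemma \ref{lem2}. The only organizational difference is that you treat the forcing $f$ and the field $F$ simultaneously (absorbing $f\vec e_i$ into the differentiated field), whereas the paper splits the argument into Lemma \ref{teok} (pure field, invoking Remark \ref{lem3}) and Lemma \ref{teok1} (pure forcing) before assembling Theorem \ref{EVENwithoutHA1}.
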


For the sake of simplicity, we split the proof of Theorem \ref{EVENwithoutHA1} into Lemma \ref{teok} and Lemma \ref{teok1}.

In Lemma \ref{teok}, for $a>-1$, we are going to prove Schauder estimates for solutions to the following problem
\begin{equation}\label{evenLa3}
-\mathrm{div}\left(y^aA\nabla u\right)=\mathrm{div}\left(y^aF\right)
\end{equation}
in $B_1^+$, where $A,F$ belong to H\"older spaces. Solutions to \eqref{evenLa3} must be understood as functions belonging to $H^{1,a}(B_1^+)$ satisfying the following weak formulation
\begin{equation*}
-\int_{B_1^+}y^aA\nabla u\cdot\nabla\phi=\int_{B_1^+}y^aF\cdot\nabla\phi
\end{equation*}
for every test function $\phi\in C^{\infty}_c(B_1)$ (when $a\geq1$ test functions can be taken in $C^{\infty}_c(B_1\setminus\Sigma)$).

Actually, with Lemma \ref{teok1}, we will improve also the Schauder estimates in \cite[\S 7]{SirTerVit1}; that is, for solutions to
\begin{equation}\label{evenLa31}
-\mathrm{div}\left(y^aA\nabla u\right)=y^af
\end{equation}
in $B_1^+$, in the sense of $H^{1,a}(B_1^+)$-functions such that
\begin{equation*}
\int_{B_1^+}y^aA\nabla u\cdot\nabla\phi=\int_{B_1^+}y^af\phi
\end{equation*}
for every test function $\phi\in C^{\infty}_c(B_1)$ (when $a\geq1$ test functions can be taken in $C^{\infty}_c(B_1\setminus\Sigma)$).

\begin{Lemma}\label{teok}
Let $a>-1$, $k\in\mathbb N$, $\alpha\in(0,1)$ and $u\in H^{1,a}(B_1^+)$ be a weak solution to \eqref{evenLa3} with $F,A\in C^{k,\alpha}(B_1^+)$. Then, $u\in C^{k+1,\alpha}(B_r^+)$ for any $0<r<1$ and satisfies \eqref{extremality}. Moreover, if $\|A\|_{C^{k,\alpha}(B_1^+)}\leq L$, then for $\beta>1$ and $0<r<1$, there exists a constant $c>0$, depending on $n,\lambda,\Lambda,a,\alpha,r,k,\beta$ and $L$, such that
\begin{equation*}
\|u\|_{C^{k+1,\alpha}(B_r^+)}\leq c\left(\|u\|_{L^\beta(B_1^+,y^adz)}+\|F\|_{C^{k,\alpha}(B_1^+)}\right).
\end{equation*}
\end{Lemma}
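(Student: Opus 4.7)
The plan is to argue by induction on $k$. The base case $k=0$ is precisely Theorem \ref{EVENwithoutHA} applied with $f\equiv 0$: it furnishes both the $C^{1,\alpha}$ estimate and the Neumann-type boundary identity $(A\nabla u+F)\cdot\vec e_n=0$ on $\Sigma$. Then assuming the conclusion at level $k-1$, I aim to upgrade from $u\in C^{k,\alpha}$ to $u\in C^{k+1,\alpha}$ under the sharpened assumption $A,F\in C^{k,\alpha}$.

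For the tangential directions, I would differentiate the equation formally in $x_i$ with $i\in\{1,\dots,n-1\}$: testing \eqref{evenLa3} against $-\partial_{x_i}\phi$ and using that the weight $y^a$ is independent of $x_i$, the function $v=\partial_{x_i}u$ is an energy solution to
$$-\mathrm{div}(y^a A\nabla v)=\mathrm{div}(y^a \tilde F_i),\qquad \tilde F_i=\partial_{x_i}F+(\partial_{x_i}A)\nabla u.$$
Since $u\in C^{k,\alpha}$ by the inductive hypothesis applied at level $k-1$, we have $\nabla u\in C^{k-1,\alpha}$, hence $\tilde F_i\in C^{k-1,\alpha}$. Applying the inductive hypothesis now to $v$, I obtain $\partial_{x_i}u\in C^{k,\alpha}$ on slightly smaller half balls, together with the quantitative estimate.

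The core difficulty is the normal derivative, and it is handled via the Neumann condition together with the preliminary Lemmata. Setting $p_j=(A\nabla u+F)\cdot\vec e_j$, the equation rewrites as $\partial_y(y^a p_n)=-\sum_{i<n}\partial_{x_i}(y^a p_i)=-y^a q$ with $q=\sum_{i<n}\partial_{x_i}p_i$. Integrating from $0$ to $y$ and using $p_n=0$ on $\Sigma$ yields the integral representation
$$p_n(x,y)=-\frac{1}{y^a}\int_0^y t^a\, q(x,t)\,dt.$$
Expanding $q$ using $A,F\in C^{k,\alpha}$, the tangential regularity $\partial_{x_i}u\in C^{k,\alpha}$ just proved, and $\partial_y u\in C^{k-1,\alpha}$, one checks that $q\in C^{k-1,\alpha}$ (after a standard H\"older extension across $\Sigma$). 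Then Remark \ref{lem3} applied at level $k-1$ gives $\partial_y p_n\in C^{k-1,\alpha}$, while direct differentiation of $p_n=\sum_j A_{nj}\partial_j u+F_n$ in the $x$-variables also produces members of $C^{k-1,\alpha}$, where the potentially delicate cross term $\partial_{x_i}\partial_y u$ is safely rewritten as $\partial_y\partial_{x_i}u\in C^{k-1,\alpha}$ thanks to the tangential step. Therefore $\nabla p_n\in C^{k-1,\alpha}$, hence $p_n\in C^{k,\alpha}$. Solving the algebraic relation $A_{nn}\partial_y u=p_n-F_n-\sum_{i<n}A_{ni}\partial_{x_i}u$ and invoking the uniform lower bound $A_{nn}\geq\lambda$ yields $\partial_y u\in C^{k,\alpha}$, which combined with the tangential bounds gives $u\in C^{k+1,\alpha}$ on the desired half ball.

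The quantitative estimate then follows by chasing constants through each of the two reductions: the inductive step contributes a dependence on $\|A\|_{C^{k-1,\alpha}}$, the derivation of the equation for $v$ contributes factors controlled by $\|A\|_{C^{k,\alpha}},\|F\|_{C^{k,\alpha}}$, the integral representation is controlled via Lemmas \ref{lem1}--\ref{lem2} and Remark \ref{lem3}, and the final algebraic inversion costs only $\lambda^{-1}$ powers. The main obstacle I anticipate is precisely the normal direction: the equation, being degenerate/singular at $\Sigma$, does not permit naive differentiation in $y$, and the gain of one degree of regularity in $\partial_y u$ is only achievable because the natural Neumann condition kills the boundary term in the integration by parts, producing the kernel $y^{-a}\int_0^y t^a(\cdot)\,dt$ tailored to Lemma \ref{lem2} and Remark \ref{lem3}.
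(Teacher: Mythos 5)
Your proof is correct and follows essentially the same route as the paper's: induction on $k$ with base case Theorem \ref{EVENwithoutHA}, tangential derivatives handled by differentiating the equation and re-invoking the inductive hypothesis, and the normal derivative recovered by rewriting the equation as the one-dimensional degenerate ODE $\partial_y(y^a\varphi)=y^ag$ for $\varphi=(A\nabla u+F)\cdot\vec e_n$, integrating with the Neumann condition, and applying Remark \ref{lem3} together with the algebraic inversion via $A_{nn}\geq\lambda$. The only differences are cosmetic (your $p_n,q$ versus the paper's $\varphi,g$, and a shift of one in the induction index), and you correctly identify the key subtlety that the cross term $\partial_{x_i}\partial_y u$ must be read as $\partial_y\partial_{x_i}u$ so that the already-established tangential regularity applies.
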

\begin{proof}
We proceed by induction. The result in case $k=0$ is true by Theorem \ref{EVENwithoutHA}. Let us assume the result true for $k\in\mathbb N$ and prove it for $k+1$. Thus, we are assuming $A,F\in C^{k+1,\alpha}(B_1^+)$ and we would like to prove that $\partial_{x_i}u,\partial_yu\in C^{k+1,\alpha}(B_r^+)$ for any $i=1,...,n-1$.
Notice that the tangential derivatives $\partial_{x_i}u$ solve
\begin{equation*}\label{evenLa3x}
-\mathrm{div}\left(y^aA\nabla (\partial_{x_i}u)\right)=\mathrm{div}\left(y^a(\partial_{x_i}F+\partial_{x_i}A\nabla u)\right) \qquad\mathrm{in \ } B_1^+,
\end{equation*}
in the sense that
\begin{equation*}
-\int_{B_1^+}y^aA\nabla (\partial_{x_i}u)\cdot\nabla\phi=\int_{B_1^+}y^a(\partial_{x_i}F+\partial_{x_i}A\nabla u)\cdot\nabla\phi
\end{equation*}
for every test function $\phi\in C^{\infty}_c(B_1)$ (when $a\geq1$ test functions can be taken in $C^{\infty}_c(B_1\setminus\Sigma)$).

Since the field $\partial_{x_i}F+\partial_{x_i}A\nabla u$ belongs to $C^{k,\alpha}(B_r^+)$, by inductive hypothesis we have
\begin{equation}\label{xireg}
\partial_{x_i}u\in C^{k+1,\alpha}(B_r^+)\qquad \mathrm{for \  any \ } i=1,...,n-1.
\end{equation}

Now, equation \eqref{evenLa3} can be rewritten as
\begin{equation*}
-\mathrm{div}\left(A\nabla u\right)=\frac{a(A\nabla u+F)\cdot\vec e_{n}}{y}+\mathrm{div}F
\end{equation*}
Hence
\begin{equation}\label{eqphi}
\partial_y\varphi+\frac{a}{y}\varphi=g:=-\mathrm{div}F+\partial_yF_{n}-\sum_{i=1}^{n-1}\partial_{x_i}(A\nabla u\cdot \vec e_i),
\end{equation}
where $g\in C^{k,\alpha}(B_r^+)$ and $\varphi:=(A\nabla u+F)\cdot\vec e_{n}$ with $\varphi(x,0)=0$. Since $\partial_y\varphi+\frac{a}{y}\varphi=y^{-a}\partial_y\left(y^a\varphi\right)$, then one would like to prove that
\begin{equation}\label{phireg}
\varphi(x,y)=\frac{1}{y^a}\int_0^yt^ag(x,t)dt\in C^{k+1,\alpha}(B_r^+).
\end{equation}
Eventually, this last information together with \eqref{xireg} would give $u\in C^{k+2,\alpha}(B_r^+)$. In fact, isolating the term $\partial^2_{yy}u$ in the left hand side of \eqref{eqphi}, one gets the desired regularity also for this last derivative from the equation once one observes that the uniform ellipticity of $A$ \eqref{UE} gives the uniform bound from below
\begin{equation}\label{ellipticn}
a_{n,n}(x,y)=A(x,y)\vec e_{n}\cdot\vec e_{n}\geq\lambda>0.
\end{equation}
In order to prove \eqref{phireg} it is enough to remark that \eqref{xireg} and the definition of $\varphi$ immediately give $\partial_{x_i}\varphi\in C^{k,\alpha}(B_r^+)$. Then, by Remark \ref{lem3} one also gets $\partial_y\varphi\in C^{k,\alpha}(B_r^+)$.
\end{proof}

The result above implies the following

\begin{Lemma}\label{teok1}
Let $a>-1$, $k\in\mathbb N$, $\alpha\in(0,1)$ and $u\in H^{1,a}(B_1^+)$ be a weak solution to \eqref{evenLa31} with $f\in C^{k,\alpha}(B_1^+)$, $A\in C^{k+1,\alpha}(B_1^+)$. Then, $u\in C^{k+2,\alpha}(B_r^+)$ for any $0<r<1$ and satisfies
\begin{equation}\label{extremality3}
A\nabla u\cdot\vec e_{n}=0\qquad \mathrm{on \ }\Sigma.
\end{equation}
Moreover, if $\|A\|_{C^{k+1,\alpha}(B_1^+)}\leq L$, then for  $\beta>1$ and $0<r<1$, there exists a constant $c>0$ depending on $n,\lambda,\Lambda,a,\alpha,r,k,\beta$ and $L$ such that
\begin{equation*}
\|u\|_{C^{k+2,\alpha}(B_r^+)}\leq c\left(\|u\|_{L^\beta(B_1^+,y^adz)}+\|f\|_{C^{k,\alpha}(B_1^+)}\right).
\end{equation*}
\end{Lemma}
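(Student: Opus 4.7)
The plan is to reduce Lemma \ref{teok1} to Lemma \ref{teok} by converting the bulk source $y^a f$ into a divergence, and then to gain the missing degree of regularity through a one-step bootstrap on tangential derivatives followed by the ODE argument for the normal derivative. First, I set $F=(0,\dots,0,F_n)$ with
$$F_n(x,y)=\frac{1}{y^a}\int_0^y t^a f(x,t)\,dt,$$
so that $\mathrm{div}(y^a F)=\partial_y(y^a F_n)=y^a f$ in the distributional sense (the boundary contribution on $\Sigma$ vanishes since $y^a F_n=\int_0^y t^a f\,dt\to 0$ as $y\to 0^+$). Applying Lemma \ref{lem2} with $g=f\in C^{k,\alpha}$ gives $F_n/y\in C^{k,\alpha}(B_1^+)$, hence $F_n\in C^{k,\alpha}(B_1^+)$ with $F_n|_\Sigma\equiv 0$. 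Therefore Lemma \ref{teok} applies to $u$ at level $k$ and yields $u\in C^{k+1,\alpha}(B_\rho^+)$ for every $\rho<1$, together with the Neumann condition $(A\nabla u+F)\cdot\vec e_n=0$ on $\Sigma$, which reduces to \eqref{extremality3} because $F_n$ vanishes there.

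To gain the missing derivative, I would treat tangential and normal directions separately. For any $i\in\{1,\dots,n-1\}$, the function $v:=\partial_{x_i}u$ solves weakly
$$-\mathrm{div}(y^a A\nabla v)=\mathrm{div}\bigl(y^a(f\vec e_i+\partial_{x_i}A\,\nabla u)\bigr),$$
where I have rewritten $y^a\partial_{x_i}f$ as $\mathrm{div}(y^a f\vec e_i)$ to avoid differentiating $f$ classically. The effective field lies in $C^{k,\alpha}$ since $f\in C^{k,\alpha}$, $\partial_{x_i}A\in C^{k,\alpha}$ (as $A\in C^{k+1,\alpha}$), and $\nabla u\in C^{k,\alpha}$ from the previous step. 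A second application of Lemma \ref{teok} at level $k$ then yields $\partial_{x_i}u\in C^{k+1,\alpha}(B_r^+)$ for every tangential index $i$.

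For the normal derivative I would mimic the ODE argument in the proof of Lemma \ref{teok}. Setting $\varphi:=A\nabla u\cdot\vec e_n$, equation \eqref{evenLa31} rearranges to
$$\partial_y\varphi+\frac{a}{y}\varphi=g,\qquad g:=-f-\sum_{i=1}^{n-1}\partial_{x_i}(A\nabla u\cdot\vec e_i),$$
which, together with $\varphi(x,0)=0$, gives the explicit representation $\varphi(x,y)=y^{-a}\int_0^y t^a g(x,t)\,dt$. A direct accounting using $A\in C^{k+1,\alpha}$, the newly established $\partial_{x_i}u\in C^{k+1,\alpha}$, and $\partial_y u\in C^{k,\alpha}$ shows $g\in C^{k,\alpha}$. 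Remark \ref{lem3} then gives $\partial_y\varphi\in C^{k,\alpha}$, while expanding $\varphi=\sum_j a_{nj}\partial_j u$ and differentiating tangentially yields $\partial_{x_i}\varphi\in C^{k,\alpha}$, so altogether $\varphi\in C^{k+1,\alpha}$. The uniform ellipticity bound $a_{nn}\geq\lambda>0$ lets me solve the algebraic relation
$$\partial_y u=\frac{1}{a_{nn}}\Bigl(\varphi-\sum_{j<n}a_{nj}\partial_{x_j}u\Bigr)$$
for $\partial_y u$ and conclude $\partial_y u\in C^{k+1,\alpha}$; combined with the tangential estimates this gives $u\in C^{k+2,\alpha}(B_r^+)$. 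The quantitative bound follows by chaining together the constants from the two applications of Lemma \ref{teok} with those from Lemma \ref{lem2} and Remark \ref{lem3}.

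The main subtlety lies in the tangential step when $k=0$: there $\partial_{x_i}f$ need not exist pointwise, and the identity $\partial_{x_i}(y^a f)=\mathrm{div}(y^a f\vec e_i)$ is crucial to keep the right-hand side in divergence form with a $C^{0,\alpha}$ field, so that Lemma \ref{teok} can still be invoked. Beyond this, the argument is a bookkeeping exercise in H\"older algebra patterned on the proof of Lemma \ref{teok}.
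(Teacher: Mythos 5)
Your proof is correct, and its architecture differs from the paper's in one meaningful respect. The paper proves Lemma \ref{teok1} by induction on $k$: for $k=0$ it invokes Theorem \ref{EVENwithoutHA} to get the starting $C^{1,\alpha}$ bound, then bootstraps the tangential derivatives and runs the ODE argument on $\varphi=A\nabla u\cdot\vec e_n$; the inductive step simply replays the same scheme using Lemma \ref{teok}. You instead skip the induction entirely by converting the bulk source into a divergence at the outset, writing $y^a f=\mathrm{div}(y^a F)$ with $F_n(x,y)=y^{-a}\int_0^y t^a f(x,t)\,dt$, checking via Lemma \ref{lem2} that $F_n=y\cdot(F_n/y)\in C^{k,\alpha}(B_1^+)$ with $F_n|_\Sigma=0$, and then applying Lemma \ref{teok} once at level $k$ to get $u\in C^{k+1,\alpha}$ and the boundary condition \eqref{extremality3}. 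This conversion device is essentially the one the paper uses for Corollary \ref{Cork} (there applied to the singularly forced right-hand side $y^{a}f/y$), so you are in effect re-routing the logic: the paper proves Lemma \ref{teok1} to deduce Corollary \ref{Cork}, whereas you borrow Corollary \ref{Cork}'s mechanism to dispense with the induction inside Lemma \ref{teok1}. From that point on the two proofs coincide: the tangential bootstrap via the equation for $\partial_{x_i}u$ with field $f\vec e_i+\partial_{x_i}A\,\nabla u\in C^{k,\alpha}$, and the ODE representation $\varphi=y^{-a}\int_0^y t^a g$ with $g=-f-\sum_i\partial_{x_i}(A\nabla u\cdot\vec e_i)\in C^{k,\alpha}$, closed off with Remark \ref{lem3} (the paper's reference ``Lemma \ref{rem1}'' at that spot is a typo) and the ellipticity bound $a_{nn}\geq\lambda$. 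Your remark that $\partial_{x_i}(y^a f)$ must be interpreted distributionally as $\mathrm{div}(y^a f\vec e_i)$ when $k=0$ is the same point the paper handles implicitly by writing the tangential equation directly in divergence form. The only thing you might make explicit, as the paper tacitly does, is the (standard, difference-quotient) justification that $\partial_{x_i}u\in H^{1,a}(B_1^+)$ so that Lemma \ref{teok} is applicable to it; this is at the same level of rigor as the paper.
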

\proof
We proceed by induction. Let $k=0$. Then we already know that \eqref{extremality3} holds and $u\in C^{1,\alpha}(B_r^+)$ by Theorem \ref{EVENwithoutHA}. We would like to prove that actually $\partial_{x_i}u,\partial_y u\in C^{1,\alpha}(B_r^+)$ for any $i=1,...,n-1$. The tangential derivatives solve the equation
\begin{equation*}
-\mathrm{div}\left(y^aA\nabla(\partial_{x_i}u)\right)=\mathrm{div}\left(y^a(\partial_{x_i}A\nabla u+f\vec e_i)\right),
\end{equation*}
whose solutions belong to $C^{1,\alpha}(B_r^+)$ again by Theorem \ref{EVENwithoutHA} since the field $\partial_{x_i}A\nabla u+f\vec e_i$ belongs to $C^{0,\alpha}$. Then, in order to prove that $\partial_y u\in C^{1,\alpha}(B_r^+)$ it remains to prove that $\partial^2_{yy}u\in C^{0,\alpha}(B_r^+)$ (being $\partial_{x_i}\partial_yu\in C^{0,\alpha}(B_r^+)$ already implied by $\partial_{x_i}u\in C^{1,\alpha}(B_r^+)$). Proceeding as in the proof of Lemma \ref{teok}, starting from the equation \eqref{evenLa31}, one can obtain the expression \eqref{phireg} for $\varphi=A\nabla u\cdot\vec e_{n}$ with $g=-f-\sum_{i=1}^{n-1}\partial_{x_i}(A\nabla u\cdot\vec e_i)\in C^{0,\alpha}$. Hence, by Lemma \ref{rem1}, we get $\partial_y\varphi\in C^{0,\alpha}$, which together with the condition \eqref{ellipticn} gives $\partial^2_{yy}u\in C^{0,\alpha}(B_r^+)$. Then the induction works with the very same reasonings applying Lemma \ref{teok}.
\endproof

\begin{proof}[Proof of Theorem \ref{EVENwithoutHA1}]
Follows by Lemmata \ref{teok} and \ref{teok1}.
\end{proof}

As a further consequence of Theorem \ref{EVENwithoutHA1} (actually Lemma \ref{teok}), when $a>0$ we can also provide Schauder estimates for solutions to the singularly forced equation

\begin{equation}\label{evenLa2}
-\mathrm{div}\left(y^aA\nabla u\right)=y^a\dfrac{f}{y}
\end{equation}
in $B_1^+$, in the sense of $H^{1,a}(B_1^+)$-functions such that
\begin{equation*}
\int_{B_1^+}y^aA\nabla u\cdot\nabla\phi=\int_{B_1^+}y^a\frac{f}{y}\phi
\end{equation*}
for every test function $\phi\in C^{\infty}_c(B_1)$ (when $a\geq1$ test functions can be taken in $C^{\infty}_c(B_1\setminus\Sigma)$).

\begin{Corollary}[Schauder estimates up to the characteristic hyperplane $\Sigma$ with a singular forcing term]\label{Cork}
Let $a>0$, $k\in\mathbb N$, $\alpha\in(0,1)$, $f,A\in C^{k,\alpha}(B_1^+)$. Then, any energy solution $u$ to \eqref{evenLa2} belongs to $C^{k+1,\alpha}(B_r^+)$ for any $r\in(0,1)$. Moreover, if $\|A\|_{C^{k,\alpha}(B_1^+)}\leq L$, then for $r\in(0,1)$ and $\beta>1$, there exists $c>0$ depending only on $n,\lambda,\Lambda,a,k,\alpha,r,\beta$ and $L$ such that
\begin{equation*}
\|u\|_{C^{k+1,\alpha}(B_r^+)}\leq c\left(\|u\|_{L^\beta(B_1^+,y^adz)}+\|f\|_{C^{k,\alpha}(B_1^+)}\right).
\end{equation*}
Moreover, the solutions satisfy the following boundary condition
\begin{equation}\label{extremality2}
aA\nabla u\cdot\vec e_{n}+f=0\qquad \mathrm{on \ }\Sigma.
\end{equation}
\end{Corollary}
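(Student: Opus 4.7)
The plan is to reduce the singularly forced equation \eqref{evenLa2} to the pure divergence-form equation \eqref{evenLa3} already handled in Lemma \ref{teok}, by absorbing the singular forcing $y^{a-1}f$ into a single H\"older continuous vector field $\tilde F$. Once this reduction is made, both the Schauder estimate and the inhomogeneous Neumann condition \eqref{extremality2} will drop out: the quantitative estimate comes directly from Lemma \ref{teok}, while the boundary condition will follow from the natural identity $(A\nabla u + \tilde F)\cdot \vec e_n = 0$ given by that lemma, after computing the trace of $\tilde F\cdot \vec e_n$ on $\Sigma$.

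The concrete choice will be
\[
\tilde F(x,y) := \left(\frac{1}{y^a}\int_0^y t^{a-1} f(x,t)\,dt\right)\vec e_n.
\]
Differentiating in $y$, one immediately checks $\mathrm{div}(y^a\tilde F) = \partial_y\!\int_0^y t^{a-1}f(x,t)\,dt = y^{a-1}f$, so equation \eqref{evenLa2} is equivalent in the energy sense to $-\mathrm{div}(y^a A\nabla u) = \mathrm{div}(y^a\tilde F)$, which is exactly of the form \eqref{evenLa3}. The main technical point of the argument is then to verify that $\tilde F \in C^{k,\alpha}(B_1^+)$ with norm controlled by $\|f\|_{C^{k,\alpha}(B_1^+)}$: this is precisely Lemma \ref{lem2} applied with the parameter $a-1$ in place of $a$, which is admissible because $a>0$ forces $a-1>-1$. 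This is the step where the hypothesis $a>0$ (in contrast to the $a>-1$ hypothesis of the preceding results) enters in an essential way: without it, the primitive $\int_0^y t^{a-1}f\,dt$ would not even be absolutely convergent at $0$.

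Once the regularity of $\tilde F$ is secured, Lemma \ref{teok} directly yields $u\in C^{k+1,\alpha}(B_r^+)$ together with the quantitative bound
\[
\|u\|_{C^{k+1,\alpha}(B_r^+)}\leq c\bigl(\|u\|_{L^\beta(B_1^+,y^adz)} + \|\tilde F\|_{C^{k,\alpha}(B_1^+)}\bigr)\leq c\bigl(\|u\|_{L^\beta(B_1^+,y^adz)} + \|f\|_{C^{k,\alpha}(B_1^+)}\bigr),
\]
and the boundary identity $(A\nabla u + \tilde F)\cdot \vec e_n = 0$ on $\Sigma$. Expanding $f(x,t) = f(x,0) + O(t^\alpha)$ and integrating, one obtains
\[
\tilde F\cdot \vec e_n(x,0) = \lim_{y\to 0^+} \frac{1}{y^a}\int_0^y t^{a-1} f(x,t)\,dt = \frac{f(x,0)}{a},
\]
so the Neumann condition rewrites as $aA\nabla u\cdot\vec e_n + f = 0$, which is exactly \eqref{extremality2}. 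I do not foresee any serious obstacle: all the nontrivial work has already been carried out in Lemmata \ref{lem2} and \ref{teok}, and the content of the corollary is essentially the elementary observation that, precisely when $a>0$, the singular source $y^{a-1}f$ can be gauged into an exact divergence by the primitive that vanishes at $y=0$.
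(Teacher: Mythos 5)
Your proof is correct and is essentially identical to the paper's: both define $F(x,y)=\vec e_n\,y^{-a}\int_0^y t^{a-1}f(x,t)\,dt$, apply Lemma \ref{lem2} (with exponent $a-1>-1$, exactly as you note) to get $F\in C^{k,\alpha}(B_1^+)$, and then invoke Lemma \ref{teok}. The only difference is that you spell out the shift $a\mapsto a-1$ in Lemma \ref{lem2} and the verification $\mathrm{div}(y^aF)=y^{a-1}f$, details the paper leaves implicit.
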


\begin{proof}[Proof]
One can rewrite the right hand side of \eqref{evenLa2} as a right hand side of \eqref{evenLa3} in the following way
\begin{equation*}
y^a\dfrac{f}{y}=\mathrm{div}\left(y^aF\right),
\end{equation*}
where
\begin{equation*}
F(x,y)=\vec e_{n}\frac{1}{y^{a}}\int_0^yt^{a-1}f(x,t)dt.
\end{equation*}
Thanks to Lemma \ref{lem2}, if $a>0$ and $f\in C^{k,\alpha}(B_1^+)$ then $F\in C^{k,\alpha}(B_1^+)$. Therefore, $F(x,0)\cdot\vec e_{n}=F_{n}(x,0)=\frac{f(x,0)}{a}$. Hence, the validity of Lemma \ref{teok} implies the result.
\end{proof}

\subsection{The gluing Lemma}\label{sub:glue}
In this Subsection, we show that the previous Schauder estimates hold also across the characteristic manifold, once one assumes continuity across $\Sigma$ of the solution $u$.
\begin{Lemma}[Gluing Lemma]\label{gluinglemma}
The following propositions hold true:
\begin{itemize}
\item[1)] Let $a>-1$, $p>n+a^+$, $k\in\mathbb N$ and $\alpha\in(0,1-\frac{n+a^+}{p}]$ when $k=0$ or $\alpha\in(0,1)$ when $k=0$ and $p=\infty$ or $k\geq1$. Let $u\in C(B_1)$. Let us call $u_+\in H^{1,a}(B_1^+)$, $u_-\in H^{1,a}(B_1^-)$ respectively the restrictions of $u$ to the upper and lower half balls, and let us assume that they are energy solutions to \eqref{evenLa} respectively on $B_1^+$ and on $B_1^-$. If $A,F$ belong to $C^{k,\alpha}(B_1)$, $f$ belongs to $L^p(B_1,|y|^adz)$ when $k=0$ and to $C^{k-1,\alpha}(B_1)$ when $k\geq1$, then the function $u$ belongs to $C^{k+1,\alpha}(B_r)\cap H^{1,a}(B_r)$ for any $0<r<1$ and is solution to \eqref{evenLa} in $B_r$.
\item[2)] Let $a>0$, $k\in\mathbb N$, $\alpha\in(0,1)$, $u\in C(B_1)$. Let us call $u_+\in H^{1,a}(B_1^+)$, $u_-\in H^{1,a}(B_1^-)$ respectively the restrictions of $u$ to the upper and lower half balls, and let us assume that they are energy solutions to \eqref{evenLa2} respectively on $B_1^+$ and on $B_1^-$. If $A,f$ belong to $C^{k,\alpha}(B_1)$, then the function $u$ belongs to $C^{k+1,\alpha}(B_r)\cap H^{1,a}(B_r)$ for any $0<r<1$ and is solution to \eqref{evenLa2} in $B_r$.
\end{itemize}
\end{Lemma}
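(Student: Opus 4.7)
The plan is to combine the one-sided Schauder estimates (Theorem~\ref{EVENwithoutHA1} for part 1), Corollary~\ref{Cork} for part 2)) with the compatibility of the Neumann-type boundary conditions \eqref{extremality}, \eqref{extremality2} enforced on $\Sigma$ from both sides.

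As the first step, I would apply the one-sided Schauder results to the restrictions $u_\pm$, obtaining $u_\pm \in C^{k+1,\alpha}(\overline{B_r^\pm})$ for every $r<1$ together with the pertinent extremality condition on $\Sigma$ from both sides. For the weak formulation across $\Sigma$: every test function $\phi\in C^\infty_c(B_1)$ (or $C^\infty_c(B_1\setminus\Sigma)$ when $a\geq 1$) restricts to an admissible test function on each half-ball, so summing the two one-sided weak formulations produces the global weak formulation with weight $|y|^a$; the inclusion $u\in H^{1,a}(B_r)$ follows from the one-sided regularity together with continuity of $u$ across $\Sigma$.

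The core of the argument is to show that every partial derivative of $u$ up to order $k+1$ matches on $\Sigma$; this I would prove by induction on the order $\ell$, $0\leq \ell\leq k+1$. The base case $\ell=0$ is the continuity assumption. In the inductive step, any derivative of order $\ell+1$ containing at least one tangential factor is written as $\partial_{x_i} D^\gamma u$ with $|\gamma|=\ell$, and its matching on $\Sigma$ follows from the matching of $D^\gamma u$ by tangential differentiation along $\Sigma$. To treat the purely normal derivatives $\partial_y^{\ell+1} u$ at all orders $\ell+1\leq k+1$ uniformly, I would transform the equation into an ODE in $y$ for the flux $\varphi=(A\nabla u)_n + F_n$, exactly as in the proof of Lemma~\ref{teok}:
\[
\partial_y\varphi + \frac{a}{y}\,\varphi \;=\; -\Big(f + \mathrm{div}\,F + \sum_{i<n}\partial_{x_i}(A\nabla u\cdot \vec e_i)\Big) \;=:\; -g(x,y),
\]
with the boundary condition $\varphi(x,0)=0$ supplied by the extremality condition. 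Passing to the limit $y\to 0^\pm$ in the ODE (using $\varphi(x,0)=0$ so that $\varphi/y\to \partial_y\varphi(x,0^\pm)$) yields the algebraic identity $(1+a)\,\partial_y\varphi(x,0^\pm) = -g(x,0)$; successive $y$-differentiation of the ODE gives analogous algebraic relations for $\partial_y^m\varphi(x,0^\pm)$ with coefficients that depend only on $a$ and the $y$-derivatives of $g$, and that are nonzero because $a>-1$. Since $g$ involves only the data and tangential first derivatives of $u$, together with lower-order normal derivatives that already match by the inductive hypothesis, the trace on $\Sigma$ of every $\partial_y^m\varphi$ is the same from both sides. Matching of $\partial_y^{\ell+1} u$ on $\Sigma$ is then extracted from the identity
\[
a_{nn}\,\partial_y u \;=\; \varphi - \sum_{i<n} a_{in}\,\partial_{x_i}u - F_n
\]
by further $y$-differentiation and by the uniform positivity $a_{nn}\geq\lambda>0$.

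Once all partial derivatives up to order $k+1$ match on $\Sigma$ and each $u_\pm$ is $C^{k+1,\alpha}$ up to $\Sigma$ from its side, a standard H\"older gluing argument (comparing two points on opposite sides through the segment meeting $\Sigma$) yields $u\in C^{k+1,\alpha}(B_r)$. Part 2) is handled identically, using \eqref{extremality2} in place of \eqref{extremality} and the representation underlying Corollary~\ref{Cork} (via Lemma~\ref{lem2}) for the singularly forced term. The main obstacle is precisely the iterative handling of the singular factor $1/y$ coupling the top-order normal derivatives across $\Sigma$: the ODE/integrating-factor analysis initiated by the extremality condition and by $a>-1$ is what decouples them and propagates matching to all orders.
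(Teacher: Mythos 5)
Your proposal follows the same strategy as the paper's proof: apply the one-sided Schauder estimates to $u_\pm$, then show by induction on derivative order that all derivatives up to order $k+1$ match on $\Sigma$, using the matching of tangential derivatives plus the extremality condition and the ODE for the flux $\varphi=(A\nabla u+F)\cdot\vec e_n$ to control the purely normal derivatives. Your variant of the normal-derivative step (passing to the limit in the singular ODE to obtain the recursion $\partial_y^m\varphi(x,0^\pm)=\tfrac{m}{a+m}\partial_y^{m-1}g(x,0)$) is an equivalent reformulation of what the paper extracts from the integrating-factor representation $\varphi=y^{-a}\int_0^y t^a g$ via Lemma~\ref{lem2} and Remark~\ref{lem3}, and it is valid precisely because $a>-1$.

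There is, however, one step you glide over that requires a genuine additional argument. You assert that ``the inclusion $u\in H^{1,a}(B_r)$ follows from the one-sided regularity together with continuity of $u$ across $\Sigma$.'' For $a\in(-1,1)$ the weight is $A_2$-Muckenhoupt and the $(H=W)$ property holds, so finiteness of the weighted norm (immediate from $u\in C^{1,\alpha}(B_r)$ and local integrability of $|y|^a$) indeed implies membership. But for $a\geq 1$ the $(H=W)$ property fails: $H^{1,a}$ is defined as a completion, so a function with finite weighted norm need not lie in the space, and the fact that the natural test class is $C^\infty_c(B_1\setminus\Sigma)$ is not cosmetic. The paper closes this gap with a density argument: since $C^\infty_c(\overline{B_1^\pm}\setminus\Sigma)$ is dense in $H^{1,a}(B_1^\pm)$ in the superdegenerate regime, one approximates $u_+$ and $u_-$ separately by such functions and juxtaposes the two approximants, producing $C^\infty_c(\overline{B_1}\setminus\Sigma)$-functions converging to $u$ in the $H^{1,a}$-norm. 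Without this explicit step, the conclusion that $u\in H^{1,a}(B_r)$ and hence that $u$ is an energy solution of the glued equation in $B_r$ is not justified for $a\geq1$.
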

\proof
Let us prove $1)$, being the second point a straightforward consequence. We would like to show by induction that, for any order $|\beta|\leq k+1$, the derivatives $D^\beta u$ glue continuously across $\Sigma$. Set $k=0$, then by Theorem \ref{EVENwithoutHA} we know that $u\in C(B_1)$ and $u_+\in C^{1,\alpha}(B_r^+), u_-\in C^{1,\alpha}(B_r^-)$. Moreover, $u$ weakly solves
\begin{equation*}
-\mathrm{div}\left(|y|^aA\nabla u\right)=|y|^af+\mathrm{div}\left(|y|^aF\right)\qquad\mathrm{in \ } B_1^+ \ \mathrm{and} \ B_1^-.
\end{equation*}
Hence, tangential derivatives are actually continuous across $\Sigma$, since
\begin{equation*}
\lim_{y\to0^+}\partial_{x_i}u(x_0,y)=\lim_{y\to0^+}\lim_{t\to0}\frac{u(x_0+t\vec e_i,y)-u(x_0,y)}{t}=\lim_{t\to0}\frac{u(x_0+t\vec e_i,0)-u(x_0,0)}{t}=l
\end{equation*}
and
\begin{eqnarray*}
\lim_{y\to0^-}\partial_{x_i}u(x_0,y)&=&\lim_{y\to0^-}\lim_{t\to0}\frac{u(x_0+t\vec e_i,y)-u(x_0,y)}{t}\\
&=&\lim_{y\to0^+}\lim_{t\to0}\frac{u(x_0+t\vec e_i,-y)-u(x_0,-y)}{t}=l.
\end{eqnarray*}
Moreover, the boundary condition \eqref{extremality} at $\Sigma$ allows to prove continuity also for the last partial derivative across the hyperplane
\begin{equation*}
\partial_y u=\frac{1}{a_{n,n}}\left(-F_{n}-\sum_{i=1}^{n-1}a_{n,i}\partial_{x_i}u\right)\qquad \mathrm{on \ }B'_1.
\end{equation*}
Now we suppose the result true for a generic integer $k\in\mathbb N$ and we prove it for $k+1$ by arguing as in the proofs of Lemmata \ref{teok} and \ref{teok1}. Indeed, any tangential derivative $\partial_{x_i}u$ is actually solution to the suitable problem obtained after differentiation on both $B_1^+$ and $B_1^-$, and enjoys the regularity across the hyperplane by inductive hypothesis. Then, in order to obtain the regularity for the last partial derivative $\partial_y u$, one uses again the equations and the properties for function $\varphi=(A\nabla u+F)\cdot\vec e_{n}$ obtained thanks to Lemmata \ref{lem1} and \ref{lem2} and Remark \ref{lem3}.

Now, we would like to prove that actually $u$ belongs to $H^{1,a}(B_r)$. Notice that when $a\in(-1,1)$, the weight is $A_2$-Muckenhoupt and hence the $(H=W)$ property holds true (see \cite[Remark 2.3]{SirTerVit1}); that is, in order to belong to the relevant Sobolev space it is enough to have a finite weighted norm. Indeed, $u\in H^{1,a}(B_r)$ follows trivially in the case of $u \in C^{1,\alpha}$ and a locally integrable weight. On the other hand, when the weight is superdegenerate; that is, $a\geq1$, then the $(H=W)$ property does not necessarily hold, but one can find $C^{\infty}_c(\overline B_1\setminus\Sigma)$-functions arbitrarily close to $u$ in the $H^{1,a}$-norm by juxtaposition of a couple in $\left(C^{\infty}_c(\overline{B_1^+}\setminus\Sigma),C^{\infty}_c(\overline{B_1^-}\setminus\Sigma)\right)$ which are arbitrarily close in the norm of the upper and lower half balls to $u_+$ and $u_-$. This fact is due to the strong degeneracy of the weight term and the density of smooth functions with compact support in $\overline{B_1}\setminus\Sigma$ in the Sobolev space.
Finally, the fact that $u$ is solution of \eqref{evenLa} in $B_r$ is a trivial consequence of the weak formulations for $u_+$ and $u_-$ in the half balls $B_1^+$ and $B_1^-$.
\endproof

\subsection{Curved characteristic manifolds}\label{sec-curved}
The goal of this Subsection is to extend the results of Subsections \ref{sub.flat}, \ref{sub2.2} and \ref{sub:glue} to elliptic problems which are degenerate/singular on a curved characteristic manifold. Consider now a regular hypersurface $\Gamma$ embedded in $\R^{n}$ for $n\geq2$ (where the variable in $\R^{n}$ is denoted by $z=(x,y)\in\R^{n-1}\times\R$) and localize the problem on a ball with center on the characteristic manifold (for simplicity the origin). Hence, in such a ball we can describe the domain which lives from one side of the manifold as in \eqref{localizeGamma}; that is,
\begin{equation*}
\Omega_\varphi^+\cap B_1=\{y>\varphi(x)\}\cap B_1\qquad\mathrm{with}\qquad \Gamma\cap B_1=\{y=\varphi(x)\}\cap B_1.
\end{equation*}
In other words, we are locally describing the upper side of the manifold as the epigraph of a function $\varphi$ and the manifold as its graph.
For $z\in \Gamma$, let us denote by $\nu^+(z)$ the outward unit normal vector to $\Omega_\varphi^+$. Let us consider a weight term $\rho(z)$ satisfying the properties in \eqref{rhoconditions}. For $a>-1$, let us consider weak solutions $w\in H^1(\Omega_\varphi^+\cap B_1,\rho(z)^adz)$ to
\begin{equation}\label{eqcurved}
-\mathrm{div}\left(\rho^aA\nabla w\right)=\rho^af+\mathrm{div}\left(\rho^aF\right)
\end{equation}
in $\Omega_\varphi^+\cap B_1$ in the sense that
\begin{equation*}
\int_{\Omega_\varphi^+\cap B_1}\rho^aA\nabla w\cdot\nabla\phi=\int_{\Omega_\varphi^+\cap B_1}\rho^a(f\phi-F\cdot\nabla\phi)
\end{equation*}
for any $\phi\in C^\infty_c(B_1)$ (when $a\geq1$ test functions can be taken in $C^{\infty}_c(B_1\setminus\Sigma)$). The main result of the Subsection is Theorem \ref{curvedschauder}.

\begin{proof}[Proof of Theorem \ref{curvedschauder}]
The result is a direct consequence of Theorem \ref{EVENwithoutHA} and Theorem \ref{EVENwithoutHA1}, after applying a classical diffeomorphism which straighten the boundary; that is,
\begin{equation}\label{diffeo}
\Phi(x,y)=(x,y+\varphi(x))=(\overline x,\overline y),
\end{equation}
which is of class $C^{k+1,\alpha}$. There exists a small enough $R>0$ such that
$\Phi(B_R\cap\{y>0\})\subseteq B_1\cap\{y>\varphi(x)\}$; that is, it is a subset of the domain where the original equation is satisfied and $\Phi(0)=\Phi^{-1}(0)=0$. Additionally, the boundary $B_R\cap\{y=0\}$ is mapped into the boundary $B_1\cap\{y=\varphi(x)\}$.
The Jacobian associated with $\Phi$ is given by
\begin{align*}
J_\Phi(x)=\left(\begin{array}{c|c}
\mathbb{I}_{n-1}&{\mathbf 0}\\\hline
(\nabla \varphi(x))^T&1
\end{array}\right), \qquad \mathrm{with}\quad |\mathrm{det} \, J_\Phi(x)|\equiv1.
\end{align*}
Up to a possible dilation, one can translate the study of \eqref{eqcurved} into the study of the following problem for $\tilde w=w\circ\Phi$
\begin{equation*}
-\mathrm{div}\left(\tilde\rho^a\tilde A\nabla \tilde w\right)=\tilde\rho^a\tilde f+\mathrm{div}\left(\tilde\rho^a \tilde F\right)\qquad\mathrm{in \ }  B_1^+,
\end{equation*}
where $\tilde\rho=\rho\circ\Phi$, $\tilde f=f\circ\Phi$, $\tilde F=J_\Phi^{-1}F\circ\Phi$ and
\begin{equation}\label{tildeA}
\tilde A=(J_\Phi^{-1}) (A \circ\Phi) (J_\Phi^{-1})^T.
\end{equation}
The new weight term $\tilde\rho$ belongs to $C^{k+1,\alpha}(B_1^+)$ and satisfies
\begin{equation*}\label{tilderhoconditions}
\tilde\rho>0 \quad\mathrm{in} \  B_1^+,\qquad
\tilde\rho=0 \quad\mathrm{on} \ B'_1\quad\mbox{and}\quad
\partial_y\tilde\rho>0 \quad\mathrm{on} \  B'_1.
\end{equation*}
Hence, the conditions above together with Lemma \ref{lem1}, assure that
\begin{equation*}
\frac{\tilde\rho}{y}\in C^{k,\alpha}(B_1^+)\qquad\mathrm{and}\qquad \frac{\tilde\rho}{y}\geq\mu>0 \qquad\mathrm{in \ } B_1^+\cup B'_1,
\end{equation*}
implying eventually that $\tilde w$ is solution to
\begin{equation*}
-\mathrm{div}\left(y^a \overline A\nabla \tilde w\right)=y^a\overline f+\mathrm{div}\left(y^a \overline F\right)\qquad\mathrm{in \ }  B_1^+,
\end{equation*}
with $\overline A=\tilde A(\tilde\rho/y)^a\in C^{k,\alpha}(B_1^+)$ and uniformly elliptic, $\overline F=\tilde F(\tilde\rho/y)^a\in C^{k,\alpha}(B_1^+)$ and $\overline f=\tilde f(\tilde\rho/y)^a$ belongs to $L^p(B_1^+,y^adz)$ when $k=0$ and to $C^{k-1,\alpha}(B_1^+)$ when $k\geq1$. Moreover, the new outward unit normal vector is
\begin{equation}\label{normaltransf}
-\vec e_n=(J_{\Phi})^T(\nu^+\circ\Phi)^T\sqrt{1+|\nabla_x\varphi|^2}.
\end{equation}
This can be checked having that
\begin{equation*}
\nu^+\circ\Phi(x,0)=\frac{(\nabla_x\varphi(x),-1)}{\sqrt{1+|\nabla_x\varphi(x)|^2}}.
\end{equation*}
As we have already remarked, regularity for $\tilde w$ follows from Theorem \ref{EVENwithoutHA} and Theorem \ref{EVENwithoutHA1}. Then, the regularity is inherited by $w$ through composition with the diffeomorphism $w=\tilde w\circ\Phi^{-1}$.
\end{proof}

As in Corollary \ref{Cork}, when $a>0$ we can provide regularity also for weak solutions $w\in H^1(\Omega_\varphi^+\cap B_1,\rho(z)^adz)$ to
\begin{equation}\label{eqcurved1}
-\mathrm{div}\left(\rho^aA\nabla w\right)=\rho^a\dfrac{f}{\rho}
\end{equation}
in $\Omega_\varphi^+\cap B_1$ in the sense that
\begin{equation*}
-\int_{\Omega_\varphi^+\cap B_1}\rho^aA\nabla w\cdot\nabla\phi=\int_{\Omega_\varphi^+\cap B_1}\rho^a\dfrac{f}{\rho}\phi
\end{equation*}
for any $\phi\in C^\infty_c(B_1)$ (when $a\geq1$ test functions can be taken in $C^{\infty}_c(B_1\setminus\Sigma)$).
\begin{Theorem}\label{singularcurved}
Let $a>0$, $k\in\mathbb N$, $\alpha\in(0,1)$. Let $\varphi\in C^{k+1,\alpha}(B'_1)$ and $\rho\in C^{k+1,\alpha}(\Omega_\varphi^+\cap B_1)$ satisfying \eqref{rhoconditions}. Let $w\in H^1(\Omega_\varphi^+\cap B_1,\rho(z)^adz)$ be a weak solution to \eqref{eqcurved1} with $f,A\in C^{k,\alpha}(\Omega_\varphi^+\cap B_1)$. Then, there exists $0<r<1$ such that $u\in C^{k+1,\alpha}(\Omega_\varphi^+\cap B_r)$ with boundary condition
\begin{equation}\label{bcaf}
a(\nabla\rho\cdot\nu^+)A\nabla w\cdot\nu^++f=0\qquad\mathrm{on \ } \Gamma\cap B_r.
\end{equation}
Moreover, if $$
\|A\|_{C^{k,\alpha}(\Omega_\varphi^+\cap B_1)}+\|\rho\|_{C^{k+1,\alpha}(\Omega_\varphi^+\cap B_1)}+\|\varphi\|_{C^{k+1,\alpha}(B'_1)}\leq L_1,\quad \mbox{and}\quad \inf_{B_{\frac{1+r}{2}}\cap\Gamma}|\partial_{\nu^+}\rho|\geq L_2>0,
$$
the for $\beta>1$, there exists $c>0$ depending on $n,\lambda,\Lambda,a,\alpha,r,k,\beta,L_1$, and $L_2$ such that
\begin{equation*}
\|w\|_{C^{k+1,\alpha}(\Omega_\varphi^+\cap B_r)}\leq c\left(\|w\|_{L^\beta(\Omega_\varphi^+\cap B_1,\rho(z)^adz)}+\|f\|_{C^{k,\alpha}(\Omega_\varphi^+\cap B_1)}\right).
\end{equation*}
\end{Theorem}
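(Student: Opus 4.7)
The plan is to mimic the proof of Theorem \ref{curvedschauder}, reducing the curved singularly forced case to the flat one handled by Corollary \ref{Cork} via the straightening diffeomorphism $\Phi(x,y)=(x,y+\varphi(x))$ from \eqref{diffeo}. Setting $\tilde w = w\circ\Phi$, $\tilde\rho = \rho\circ\Phi$, $\tilde f = f\circ\Phi$, and $\tilde A = J_\Phi^{-1}(A\circ\Phi)(J_\Phi^{-1})^T$ as in \eqref{tildeA}, a short change-of-variables computation (identical to the one already carried out for Theorem \ref{curvedschauder}, but now with the singular right hand side $\rho^{a-1}f$) shows that, up to a dilation, $\tilde w$ is an energy solution to
\begin{equation*}
-\mathrm{div}(\tilde\rho^a\tilde A\nabla\tilde w)=\tilde\rho^{a-1}\tilde f \qquad\mathrm{in \ } B_1^+.
\end{equation*}

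The second step is to absorb the mismatch between $\tilde\rho$ and $y$. Since $\tilde\rho\in C^{k+1,\alpha}(B_1^+)$ vanishes on $B_1'$ with $\partial_y\tilde\rho>0$ bounded below, Lemma \ref{lem1} gives $\mu:=\tilde\rho/y \in C^{k,\alpha}(B_1^+)$ with $\mu\geq\mu_0>0$. Writing $\tilde\rho^a = y^a\mu^a$ and $\tilde\rho^{a-1}=y^{a-1}\mu^{a-1}$, the equation becomes
\begin{equation*}
-\mathrm{div}(y^a\overline{A}\nabla\tilde w)=y^a\frac{\overline{f}}{y} \qquad\mathrm{in \ } B_1^+,
\end{equation*}
with $\overline{A}:=\mu^a\tilde A$ uniformly elliptic and in $C^{k,\alpha}$, and $\overline{f}:=\mu^{a-1}\tilde f \in C^{k,\alpha}$. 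Corollary \ref{Cork} then yields $\tilde w\in C^{k+1,\alpha}(B_r^+)$ with the quantitative estimate and the flat boundary condition
\begin{equation*}
a\,\overline{A}\,\nabla\tilde w\cdot\vec e_n + \overline{f} = 0 \qquad\mathrm{on \ } B_r'.
\end{equation*}

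It remains to pull back the boundary condition to the curved setting, which is the only delicate point in the argument. From $\nabla\tilde w = J_\Phi^T(\nabla w\circ\Phi)$ one finds $\tilde A\nabla\tilde w = J_\Phi^{-1}\bigl((A\nabla w)\circ\Phi\bigr)$, and using \eqref{normaltransf} one computes
\begin{equation*}
\tilde A\nabla\tilde w\cdot\vec e_n = -\sqrt{1+|\nabla_x\varphi|^2}\,(A\nabla w\cdot\nu^+)\circ\Phi \qquad\mathrm{on \ } B_r'.
\end{equation*}
Since $\rho\equiv 0$ on $\Gamma$, the tangential relations $\partial_{x_i}\rho+\partial_y\rho\,\partial_{x_i}\varphi = 0$ for $i<n$, combined with the explicit formula for $\nu^+$, give
\begin{equation*}
\mu(x,0)=\partial_y\tilde\rho(x,0) = -\frac{(\nabla\rho\cdot\nu^+)\circ\Phi}{\sqrt{1+|\nabla_x\varphi|^2}}.
\end{equation*}
Substituting these two identities into the flat boundary condition and dividing by the positive factor $\mu^{a-1}(x,0)\sqrt{1+|\nabla_x\varphi|^2}$ produces, on $\Gamma\cap B_r$,
\begin{equation*}
a(\nabla\rho\cdot\nu^+)\,A\nabla w\cdot\nu^+ + f = 0,
\end{equation*}
which is precisely \eqref{bcaf}. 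The $C^{k+1,\alpha}$ estimate for $w=\tilde w\circ\Phi^{-1}$ then follows by composition with $\Phi^{-1}\in C^{k+1,\alpha}$, with constants depending only on $n,\lambda,\Lambda,a,\alpha,r,k,\beta,L_1,L_2$. The main (modest) obstacle is precisely this boundary-condition bookkeeping; the substantive analytic content is entirely provided by Corollary \ref{Cork} and Lemma \ref{lem1}.
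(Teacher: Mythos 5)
Your proof is correct and follows the same strategy as the paper: straighten $\Gamma$ via $\Phi$, absorb the factor $\tilde\rho/y$ into the coefficients and forcing using Lemma \ref{lem1}, invoke Corollary \ref{Cork} for the flat half-ball estimate and boundary condition \eqref{extremality2}, and pull the boundary condition back to obtain \eqref{bcaf}. The only (inessential) difference is the route to the identity $\partial_y\tilde\rho(x,0)=-\bigl((\nabla\rho\cdot\nu^+)\circ\Phi\bigr)/\sqrt{1+|\nabla_x\varphi|^2}$: you differentiate the constraint $\rho\equiv 0$ along $\Gamma$, whereas the paper evaluates a ratio limit along the normal using the defining function $H(\overline z)=\overline y-\varphi(\overline x)$; both yield the same conclusion.
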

\begin{proof}
\gio{After composing with the diffeomorphism $\Phi$ defined in \eqref{diffeo}, the result follows by applying Corollary \ref{Cork}. Hence, in what follows we just show how the boundary condition \eqref{bcaf} on the curved manifold is derived from the one \eqref{extremality2} on the hyperplane.} Let us indicate the variable on the straightened domain with $z=(x,y)$ and the one on the curved domain with $\overline z=(\overline x,\overline y)$. Then $\Phi(z)=\overline z$ and $\Phi^{-1}(\overline z)=z$ can be read as
\begin{equation*}
\begin{cases}
x=\overline x\\
y=\overline y-\varphi(\overline x)
\end{cases}\qquad\qquad
\begin{cases}
\overline x=x\\
\overline y=y+\varphi(x).
\end{cases}
\end{equation*}
The boundary condition \eqref{extremality2} in Corollary \ref{Cork} implies the boundary condition
$$a\overline A(z)\nabla \tilde w(z)\cdot\vec e_n+\overline f(z)=0\qquad\mathrm{on \ }\Sigma.$$
Then, from the latter and \eqref{normaltransf}, given a point $z=(x,y)$ it follows
$$-a\sqrt{1+|\nabla_x\varphi(x)|^2}\left(\lim_{y\to0^+}\frac{\rho\circ\Phi(z)}{y}\right) (A\circ\Phi(x,0))[(\nabla w)\circ\Phi(x,0)]\cdot(\nu^+\circ\Phi(x,0))+ f\circ\Phi(x,0)=0,$$
which leads to \eqref{bcaf}. In fact, defining $H(\overline z)=\overline y-\varphi(\overline x)$ with $\nabla H(\overline z)=(-\nabla_{\overline x}\varphi(\overline x),1)=-\nu^+(\overline z)\sqrt{1+|\nabla_{\overline x}\varphi(\overline x)|^2}$ on $\Gamma$, we have
\begin{equation*}
\lim_{y\to0^+}\frac{\rho\circ\Phi(z)}{y}=\lim_{t\to0^-}\frac{\rho(\overline z+t\nu^+(\overline z))}{H(\overline z+t\nu^+(\overline z))}=\frac{\nabla\rho\cdot\nu^+(\overline z)}{\nabla H\cdot\nu^+(\overline z)}=-\frac{\nabla\rho\cdot\nu^+(\overline z)}{\sqrt{1+|\nabla_{\overline x}\varphi(\overline x)|^2}},
\end{equation*}
where $\overline z$ lies on $\Gamma$ and is such that $\overline z+t\nu^+(\overline z)=\Phi(z)$.
\end{proof}

\gio{Finally, the gluing Lemma \ref{gluinglemma} can be extended to the case of curved characteristic manifolds.} Localizing the problem again, one describes the upper and lower sides of the hypersurface $\Gamma$ as the epigraph and hypograph of a function $\varphi$,
\begin{equation*}
\Omega_\varphi^+\cap B_1=\{y>\varphi(x)\}\cap B_1,\qquad \Omega_\varphi^-\cap B_1=\{y<\varphi(x)\}\cap B_1
\end{equation*}
with
\begin{equation*}
\Gamma\cap B_1=\{y=\varphi(x)\}\cap B_1.
\end{equation*}
For $z\in \Gamma$, let us denote by $\nu^+(z)$ the outward unit normal vector to $\Omega_\varphi^+$ and $\nu^-(z)=-\nu^+(z)$ the outward unit normal vector to $\Omega_\varphi^-$. Let us consider a weight term $\rho(z)$ satisfying
\begin{equation}\label{rhoconditions2}
\begin{cases}
\rho\neq0 &\mathrm{in} \ B_1\setminus\Gamma\\
\rho=0 &\mathrm{on} \ \Gamma\cap B_1\\
\partial_{\nu^+}\rho\neq0 &\mathrm{on} \ \Gamma\cap B_1.
\end{cases}
\end{equation}
Then, in this setting, the following gluing Lemma holds true.
\begin{Lemma}[Gluing Lemma across curved manifolds]\label{gluinglemma2}
The following propositions hold true:
\begin{itemize}
\item[1)] Let $a>-1$, $p>n+a^+$, $k\in\mathbb N$, $\alpha\in(0,1-\frac{n+a^+}{p}]$ when $k=0$ or $\alpha\in(0,1)$ when $k=0$ and $p=\infty$ or $k\geq1$. Let $\varphi\in C^{k+1,\alpha}(B'_1)$ and $\rho\in C^{k+1,\alpha}(B_1)$ satisfying \eqref{rhoconditions2}. Let $w\in C(B_1)$. Let us call $w_+\in H^{1}(\Omega_\varphi^+\cap B_1,|\rho(z)|^adz)$, $w_-\in H^{1}(\Omega_\varphi^-\cap B_1,|\rho(z)|^adz)$ respectively the restrictions of $w$ to the upper and lower sides of the manifold, and let us assume that they are energy solutions to \eqref{eqcurved} respectively in $\Omega_\varphi^+\cap B_1$ and in $\Omega_\varphi^-\cap B_1$. If $A,F$ belong to $C^{k,\alpha}(B_1)$, $f$ belongs to $L^p(B_1,|\rho(z)|^adz)$ when $k=0$ and to $C^{k-1,\alpha}(B_1)$ when $k\geq1$, then there exists $0<r<1$ such that the function $w\in C^{k+1,\alpha}(B_r)\cap H^{1}(B_r,|\rho(z)|^adz)$ and is solution to \eqref{eqcurved} in $B_r$.
\item[2)] Let $a>0$, $k\in\mathbb N$, $\alpha\in(0,1)$. Let $\varphi\in C^{k+1,\alpha}(B'_1)$ and $\rho\in C^{k+1,\alpha}(B_1)$ satisfying \eqref{rhoconditions2}. Let $w\in C(B_1)$. Let us call $w_+\in H^{1}(\Omega_\varphi^+\cap B_1,|\rho(z)|^adz)$, $w_-\in H^{1}(\Omega_\varphi^-\cap B_1,|\rho(z)|^adz)$ respectively the restrictions of $w$ to the upper and lower sides of the manifold, and let us assume that they are energy solutions to \eqref{eqcurved1} respectively in $\Omega_\varphi^+\cap B_1$ and in $\Omega_\varphi^-\cap B_1$. If $A,f$ belong to $C^{k,\alpha}(B_1)$, then there exists $0<r<1$ such that the function $w\in C^{k+1,\alpha}(B_r)\cap H^{1}(B_r,|\rho(z)|^adz)$ and is solution to \eqref{eqcurved1} in $B_r$.
\end{itemize}
\end{Lemma}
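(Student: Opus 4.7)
The plan is to flatten the manifold $\Gamma$ by means of the diffeomorphism $\Phi(x,y)=(x,y+\varphi(x))$ already exploited in the proof of Theorem \ref{curvedschauder}, and then reduce to the flat Gluing Lemma \ref{gluinglemma}. For a sufficiently small $R>0$, $\Phi$ is a $C^{k+1,\alpha}$ diffeomorphism with $|\det J_\Phi|\equiv 1$ mapping $B_R^+$ into $\Omega_\varphi^+\cap B_1$, $B_R^-$ into $\Omega_\varphi^-\cap B_1$ and $\Sigma=B_R\cap\{y=0\}$ onto a neighbourhood of the origin in $\Gamma$. Setting $\tilde w:=w\circ\Phi$, the function $\tilde w$ is continuous on $B_R$, and its restrictions $\tilde w_\pm=\tilde w|_{B_R^\pm}$ are energy solutions to the pulled-back equations
\begin{equation*}
-\mathrm{div}\bigl(|\tilde\rho|^a \tilde A\,\nabla \tilde w_\pm\bigr)=|\tilde\rho|^a\tilde f+\mathrm{div}\bigl(|\tilde\rho|^a\tilde F\bigr)\qquad\text{in } B_R^\pm,
\end{equation*}
with $\tilde\rho=\rho\circ\Phi$, $\tilde A=J_\Phi^{-1}(A\circ\Phi)(J_\Phi^{-1})^T$, $\tilde F=J_\Phi^{-1}(F\circ\Phi)$, $\tilde f=f\circ\Phi$, exactly as in \eqref{tildeA}.

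Next, from \eqref{rhoconditions2} one has $\tilde\rho(x,0)\equiv 0$, $\tilde\rho\in C^{k+1,\alpha}(B_R)$ and $\partial_y\tilde\rho(x,0)\neq 0$ with constant sign. Hence Lemma \ref{lem1} yields $\tilde\rho/y\in C^{k,\alpha}(B_R)$ with $|\tilde\rho/y|\geq\mu>0$, and since $t\mapsto t^a$ is smooth on $(0,+\infty)$, the factor $|\tilde\rho/y|^a$ also belongs to $C^{k,\alpha}(B_R)$ and is uniformly bounded between two positive constants. Writing $|\tilde\rho|^a=|y|^a\,|\tilde\rho/y|^a$ and absorbing the latter factor into the coefficients and data produces the equivalent formulation
\begin{equation*}
-\mathrm{div}\bigl(|y|^a\bar A\,\nabla \tilde w_\pm\bigr)=|y|^a\bar f+\mathrm{div}\bigl(|y|^a\bar F\bigr)\qquad \text{in }B_R^\pm,
\end{equation*}
with $\bar A=|\tilde\rho/y|^a\tilde A$ still uniformly elliptic, $\bar F=|\tilde\rho/y|^a\tilde F\in C^{k,\alpha}$, and $\bar f=|\tilde\rho/y|^a\tilde f$ of the appropriate regularity (in $L^p(B_R,|y|^a dz)$ for $k=0$, in $C^{k-1,\alpha}$ for $k\geq 1$), thanks to the standard product rule in the Hölder scale.

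At this point, case (1) of the flat Gluing Lemma \ref{gluinglemma} applied to $\tilde w$ produces a radius $0<R'<R$ such that $\tilde w\in C^{k+1,\alpha}(B_{R'})\cap H^1(B_{R'},|y|^a dz)$, and such that $\tilde w$ is an energy solution of the glued equation on $B_{R'}$. Since $\Phi^{-1}$ preserves these Hölder and weighted Sobolev classes (because $|\det J_{\Phi^{-1}}|\equiv 1$ and $|\tilde\rho/y|^a$ is bounded above and below), we conclude $w\in C^{k+1,\alpha}(B_r)\cap H^1(B_r,|\rho|^a dz)$ for some $0<r<1$ and $w$ solves \eqref{eqcurved} across $\Gamma$, proving (1). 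Part (2), involving the singular forcing $\rho^a f/\rho$, is treated identically: the same factorization $|\tilde\rho|^a=|y|^a|\tilde\rho/y|^a$ reduces the problem to the equation $-\mathrm{div}(|y|^a\bar A\,\nabla \tilde w_\pm)=|y|^a\bar f/y$ on $B_R^\pm$, to which case (2) of Lemma \ref{gluinglemma} applies.

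No substantial obstacle is expected beyond careful bookkeeping. The only delicate point is the sign issue arising from the fact that $\rho$ necessarily changes sign across $\Gamma$ (guaranteeing that $|\rho|^a$ is the natural symmetric weight), together with the requirement that $|\tilde\rho/y|^a$ remain uniformly positive and $C^{k,\alpha}$ on the two sides; both are immediate consequences of the nondegeneracy hypothesis $\partial_{\nu^+}\rho\neq 0$ on $\Gamma$ and the smoothness of $t\mapsto t^a$ away from zero.
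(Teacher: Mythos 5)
Your proposal is correct and follows essentially the same route as the paper's proof: flatten $\Gamma$ with the diffeomorphism $\Phi$ from \eqref{diffeo}, use Lemma \ref{lem1} together with the nondegeneracy condition $\partial_{\nu^+}\rho\neq 0$ to show that $|\tilde\rho/y|^a$ is $C^{k,\alpha}$ and uniformly bounded between two positive constants, absorb that factor into the coefficients and data, and then invoke the flat Gluing Lemma \ref{gluinglemma}. The paper states this more tersely but the underlying argument is the same; your version simply spells out the bookkeeping in more detail.
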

\proof
The proof relies on Lemma \ref{gluinglemma} once one flattens $\Gamma$ using the standard diffeomorphism in \eqref{diffeo}. We would like to remark that, under the conditions $\rho\in C^{k+1,\alpha}(B_1)$ and \eqref{rhoconditions2}, then
$$\left|\frac{\tilde\rho}{y}\right|^a\in C^{k,\alpha}(B_1),\qquad \left|\frac{\tilde\rho}{y}\right|\geq\mu>0,$$
since $\tilde\rho/y\neq0$ in $B_1$ and it belongs to $C^{k,\alpha}(B_1)$ by Lemma \ref{lem1}. Finally, the composition with function $|\cdot|^a$ maintains the same regularity.
\endproof

\section{Ratio of solutions to elliptic equations sharing zero sets}\label{Section3}
In this Section, we want to give an application of our Schauder theory for degenerate equations: regularity for the ratio $w=v/u$ of two solutions to \eqref{equv} satisfying $Z(u)\subseteq Z(v)$.
\subsection{The structure of the nodal set}\label{structure.nodal}
Let $n\geq 2$ and $u \in H^1(B_1)$ be a weak solution to \eqref{equv} where $A(z)=(a_{ij}(z))_{ij}$ is a symmetric uniformly elliptic matrix with H\"older continuous coefficients. In such case, the nodal set $Z(u)=u^{-1}(\{0\})$ splits into a regular part $R(u)$ and a singular part $S(u)$ (see \eqref{nodalset})
where $R(u)$ is in fact locally a $(n-1)$-dimensional hypersurface of class $C^{1,\alpha}$. If we assume additionally that the coefficient are Lipschitz continuous, then by standard elliptic regularity, any weak solution is actually of class $C^{1,1-}_\loc(B_1)$. Thus, by \cite{Han} (see also \cite{HanLin,GarLin}) the regular part $R(u)$ is locally a $(n-1)$-dimensional hypersurface of class $C^{1,1-}$ and $S(u)$ has Hausdorff dimension at most $(n-2)$.\\

The Lipschitz continuity of the coefficients $A$ allows to prove the strong unique continuation principle (see \cite{GarLin}), which consists in the fact that non-trivial solutions can not vanish with infinite order at $Z(u)$ (see \eqref{e:vanishing} for the definition of vanishing order). Ultimately, it implies that non-trivial solutions can not vanish identically in any open subset of their reference domain,
which is the classical unique continuation principle. Under the Lipschitz regularity assumption, it is possible to exploit the validity of an Almgren-type monotonicity formula, which is the key tool to the local analysis of solution near their nodal set (see \cite{GarLin,Han,CheNabVal} for more details in this direction).

Moreover, in this context the Almgren monotonicity formula allows to gain compactness in the class $\mathcal{S}_{N_0}$ of solutions to \eqref{equv} with bounded frequency (see \cite{HanLin2,LinLin,NabVal}). We postpone to the forthcoming paper \cite{TerTorVit2} any discussion on uniformity of the estimates in the compact class $\mathcal S_{N_0}$, which allows to get regularity not depending on the variety $Z(u)$.

Let us recall the notion of vanishing order of solutions to \eqref{equv} in case of Lipschitz coefficients given in \eqref{e:vanishing}; that is, for $z_0 \in Z(u)$
\begin{equation*}
\mathcal{V}(z_0,u) = \sup\left\{\beta\geq 0: \ \limsup_{r \to 0^+} \frac1{r^{n-1+2\beta}} \int_{\partial B_r(z_0)} u^2 <+\infty\right\}.
\end{equation*}
The number $\mathcal{V}(z_0,u)\geq0$ is characterized by the property that
$$
\limsup_{r \to 0^+} \frac1{r^{n-1+2\beta}} \int_{\partial B_r(z_0)} u^2  = \begin{cases} 0 & \text{if $0 <\beta< \mathcal{V}(z_0,u)$} \\
+\infty  & \text{if $\beta > \mathcal{V}(z_0,u)$}.
\end{cases}
$$
Then, as we have remarked in \eqref{maxV}, since the vanishing order $z \mapsto \mathcal{V}(z,u)$ is upper semi-continuous \cite[Lemma 1.4]{Han}, given $r \in (0,1)$ we can define
\begin{equation*}
N_0(r)=N_0(r,u)= \max_{z_0\in \overline{B_r}}\mathcal{V}(z_0,u).
\end{equation*}

\subsection{The equation for the ratio}\label{sec:3.2}
Now, assume that $u,v$ are two solutions to \eqref{equv} satisfying $Z(u)\subseteq Z(v)$. First, one would like to prove that the ratio $w=v/u$ is in fact an energy solution to the degenerate elliptic equation \eqref{eqw} in $B_1$; that is,
\begin{equation*}
\mathrm{div}(u^2A\nabla w)=0\qquad \mathrm{in \ }B_1.
\end{equation*}
Direct computations show that $w$ is a solution in a classical sense to \eqref{eqw} in $B_1\setminus Z(u)$ (when coefficients are Lipschitz continuous and $Z(u)$ has empty interior then the equation holds also almost everywhere in $B_1$). In the next Subsection we give the notion of energy solutions to \eqref{eqw} by means of weighted Sobolev spaces and weak solutions. Then, the fact that the ratio is an energy solution to \eqref{eqw} relies mostly on the validity of a Hardy type inequality for functions vanishing on $Z(u)$.
We would like to remark here that the results contained in \S \ref{sec:3.2.1} and \S \ref{sec:3.2.2} hold true in great generality on coefficients $A$, since $u,v$ need just to be continuous.

\subsubsection{A Hardy inequality on $Z(u)$}\label{sec:3.2.1}
The fact that the ratio $w=v/u$ is an energy solution to equation \eqref{eqw} relies mostly on the following Hardy-type inequality for functions vanishing on the zero set of $u$.
\begin{Lemma}[Hardy-type inequality on $Z(u)$]\label{hardy} Let $u$ be super $L_A$-harmonic in $B_1$; that is, weakly solves $-\mathrm{div}(A\nabla u)\geq0$ in $B_1$. Then, for any $\phi\in C^\infty_c(B_1\setminus Z(u))$
\begin{equation}\label{hardyineq}
\int_{B_1}\frac{|\nabla u|^2}{u^2}\phi^2\leq \left(\frac{2\Lambda}{\lambda}\right)^2\int_{B_1}|\nabla\phi|^2.
\end{equation}
\end{Lemma}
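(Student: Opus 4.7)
The plan is to use the classical Hardy trick of testing the weak super-solution inequality against $\psi = \phi^2/u$. Since $\phi \in C^\infty_c(B_1\setminus Z(u))$ and $u$ is continuous away from its nodal set (e.g.\ by De Giorgi--Nash--Moser applied to the super-solution), one has $|u|\geq c>0$ on $\operatorname{supp}\phi$, so $\psi$ is a well-defined Lipschitz function with compact support in $B_1\setminus Z(u)$; in particular it is an admissible test function. Up to splitting $\operatorname{supp}\phi$ into connected components on which $u$ has definite sign (and treating the two cases separately, or replacing $u$ by $-u$ on negative components), we may assume $u>0$ so that $\psi\geq 0$, and hence
$$
0 \;\leq\; \int_{B_1} A\nabla u\cdot\nabla\psi.
$$

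Expanding $\nabla\psi = \dfrac{2\phi\nabla\phi}{u} - \dfrac{\phi^2\nabla u}{u^2}$ and rearranging yields
$$
\int_{B_1}\frac{\phi^2}{u^2}\,A\nabla u\cdot\nabla u
\;\leq\; 2\int_{B_1}\frac{\phi}{u}\,A\nabla u\cdot\nabla\phi.
$$
The left-hand side is bounded below, via the uniform ellipticity $A\nabla u\cdot\nabla u\geq\lambda|\nabla u|^2$, by $\lambda\int_{B_1} |\nabla u|^2\phi^2/u^2$. The right-hand side is bounded above, via $|A\nabla u\cdot\nabla\phi|\leq\Lambda|\nabla u||\nabla\phi|$ and the Cauchy--Schwarz inequality, by
$$
2\Lambda\int_{B_1}\frac{|\nabla u|\,|\phi|}{|u|}\,|\nabla\phi|
\;\leq\; 2\Lambda\left(\int_{B_1}\frac{|\nabla u|^2\phi^2}{u^2}\right)^{1/2}\!\left(\int_{B_1}|\nabla\phi|^2\right)^{1/2}.
$$

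Writing $I := \int_{B_1} |\nabla u|^2\phi^2/u^2$ and $J := \int_{B_1}|\nabla\phi|^2$ (note $I<+\infty$ since $u\in H^1_\loc$ and $1/u$ is bounded on $\operatorname{supp}\phi$), the two estimates combine into $\lambda I \leq 2\Lambda\sqrt{I}\sqrt{J}$. Dividing by $\sqrt{I}$ and squaring gives precisely $I \leq (2\Lambda/\lambda)^2 J$, which is \eqref{hardyineq}. The only delicate point is the admissibility and finiteness of quantities involving $1/u$, which is guaranteed by the fact that $\operatorname{supp}\phi$ is a compact subset of $B_1\setminus Z(u)$; beyond that, the argument is a standard integration-by-parts computation with no real obstacle.
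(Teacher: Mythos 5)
Your argument is substantively the same as the paper's: test the super-solution inequality against $\psi = \phi^2/u$, expand $\nabla\psi$, bound the quadratic gradient term from below by ellipticity and the cross term from above by Cauchy--Schwarz, then cancel a factor of $\sqrt{I}$. The paper carries this out in four lines without commenting on admissibility, so your remarks on the compactness of $\mathrm{supp}\,\phi$ and boundedness of $1/u$ there are a reasonable addition.

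One point in your sign discussion does not go through as stated: on a component of $\mathrm{supp}\,\phi$ where $u<0$, passing to $-u$ does \emph{not} produce another super-solution --- $-u$ is a \emph{sub}-solution, and testing it with the nonnegative function $\phi^2/(-u)$ yields the inequality $\int A\nabla(-u)\cdot\nabla\bigl(\phi^2/(-u)\bigr)\leq 0$, which is the wrong direction. For a function that is merely super-$L_A$-harmonic the argument in fact requires $u>0$ on $\mathrm{supp}\,\phi$. In the paper's application $u$ is $L_A$-harmonic, hence simultaneously a sub- and super-solution; on negative components one then uses the sub-solution inequality with the \emph{nonpositive} test function $\phi^2/u$ to recover $\int A\nabla u\cdot\nabla(\phi^2/u)\geq 0$, after which your component-wise computation proceeds as written. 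With that repair your splitting is exactly the right bookkeeping; note the paper's own proof is silent on this sign issue as well.
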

\proof
Let us test inequality $-\mathrm{div}(A\nabla u)\geq0$ in $B_1$ with $\phi^2/u$. Then
\begin{eqnarray*}
\lambda\int_{B_1}\frac{|\nabla u|^2}{u^2}\phi^2 &\leq& \int_{B_1}\frac{A\nabla u\cdot\nabla u}{u^2}\phi^2\\
&\leq& 2\int_{B_1}\frac{A\nabla u\cdot\nabla \phi}{u}\phi\\
&\leq&2\left(\int_{B_1}\frac{|A\nabla u|^2}{u^2}\phi^2\right)^{1/2}\left(\int_{B_1}|\nabla\phi|^2\right)^{1/2}\\
&\leq&2\Lambda\left(\int_{B_1}\frac{|\nabla u|^2}{u^2}\phi^2\right)^{1/2}\left(\int_{B_1}|\nabla\phi|^2\right)^{1/2}.
\end{eqnarray*}
\endproof
We refer to the previous result using the expression \emph{Hardy inequality} since the term $|\nabla u|^2/u^2$ is possibly singular on $Z(u)$. For instance, near the regular part $R(u)$, it behaves as $\mathrm{dist}(z,R(u))^{-2}$. Therefore, the Hardy inequality holds true for any solution $v$ of \eqref{equv} having $Z(u)\subseteq Z(v)$. This is due to the following remark.
\begin{Lemma}\label{rem1}
Let $v\in H^1(B_1)$. Then for any $\varepsilon>0$ there exists $v_\varepsilon\in C^\infty_c(\overline B_1\setminus Z(v))$ such that $\|v-v_\varepsilon\|_{H^1(B_1)}<\varepsilon$.
\end{Lemma}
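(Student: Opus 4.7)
The plan is to produce $v_\varepsilon$ in two steps. First, truncate $v$ near its zero set to obtain $v_\delta\in H^1(B_1)$ that still approximates $v$ in $H^1$ but identically vanishes on an open neighborhood of $Z(v)$; then smoothly approximate $v_\delta$ and multiply by a cutoff which kills the approximation near $Z(v)$. Stampacchia's theorem ($\nabla v=0$ almost everywhere on $\{v=0\}$) is what makes the truncation step $H^1$-free of cost.

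For the truncation, pick an even $\eta_\delta\in C^\infty(\R)$ with $\eta_\delta(t)=0$ for $|t|\le\delta/2$, $\eta_\delta(t)=t$ for $|t|\ge\delta$, $|\eta_\delta(t)|\le |t|$, and $\sup_{\delta>0}\|\eta_\delta'\|_\infty<+\infty$, and set $v_\delta:=\eta_\delta(v)\in H^1(B_1)$, so that $\nabla v_\delta=\eta_\delta'(v)\nabla v$ by the chain rule. Then
\begin{equation*}
\|v-v_\delta\|_{L^2(B_1)}^2+\|\nabla(v-v_\delta)\|_{L^2(B_1)}^2\le C\delta^2|B_1|+C\int_{\{0<|v|<\delta\}}|\nabla v|^2\xrightarrow[\delta\to 0^+]{}0,
\end{equation*}
the last integral vanishing by dominated convergence thanks to Stampacchia's theorem. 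Fix $\delta>0$ so that $\|v-v_\delta\|_{H^1(B_1)}<\varepsilon/2$; by construction $v_\delta\equiv 0$ on the open neighborhood $U_\delta:=\{|v|<\delta/2\}$ of $Z(v)$.

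For the second step, select $\psi\in C^\infty(\overline B_1)$ with $\|v_\delta-\psi\|_{H^1(B_1)}$ arbitrarily small, using the standard density of smooth functions in $H^1(B_1)$. Then construct a smooth cutoff $\chi\in C^\infty(\R^n)$ with $\mathrm{supp}\,\chi\subset\overline B_1$, $\chi\equiv 1$ on an open neighborhood of $\{|v|\ge\delta/2\}$, and $\chi\equiv 0$ on an open neighborhood of $Z(v)$. Because $\chi v_\delta=v_\delta$ pointwise, one has $v_\delta-\chi\psi=\chi(v_\delta-\psi)$, hence
\begin{equation*}
\|v_\delta-\chi\psi\|_{H^1(B_1)}\le C_\chi\,\|v_\delta-\psi\|_{H^1(B_1)}<\varepsilon/2
\end{equation*}
for $\psi$ chosen close enough to $v_\delta$. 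The function $v_\varepsilon:=\chi\psi$ then lies in $C^\infty_c(\overline B_1\setminus Z(v))$ and satisfies $\|v-v_\varepsilon\|_{H^1(B_1)}<\varepsilon$ by the triangle inequality.

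The only genuine technical issue is the construction of $\chi$: the closed sets $\{|v|\ge\delta/2\}$ and $Z(v)$ are disjoint in $B_1$, but the continuity of $v$ only provides positive Euclidean separation between them locally (on any compact $K\Subset B_1$, via the uniform continuity of $v$ on $K$), not necessarily uniformly up to $\partial B_1$. I plan to handle this with a locally finite cover of $B_1$ by relatively compact sets on which separation is available, build a local smooth Urysohn function on each piece — for instance by mollifying the indicator of $\{|v|\ge 3\delta/4\}$ at a scale smaller than the local separation distance — and glue the pieces via a subordinate partition of unity into a global $\chi\in C^\infty(\R^n)$. The resulting $\|\nabla\chi\|_\infty$ is finite for the fixed $\delta$, which is all that is needed, since the constant $C_\chi$ is allowed to depend on $v$ and $\delta$.
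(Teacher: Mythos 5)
Your proof is correct but takes a genuinely different route from the paper's, although both hinge on the same truncation device: your $\eta_\delta(v)$ is, up to relabeling, the paper's $\varepsilon\,\eta(v/\varepsilon)$. The paper first invokes ``WLOG $v\in C^\infty(\overline B_1)$ by density,'' after which $\varepsilon\,\eta(v/\varepsilon)$ is automatically smooth with support away from $Z(v)$ and the proof ends immediately; the $H^1$ convergence of the truncation is then obtained by bounding the measure of $\{0<|v|\leq 2\varepsilon\}$ and noting that $\{v=0,\ |\nabla v|\neq 0\}$ has Hausdorff dimension at most $n-1$, hence Lebesgue measure zero. You keep $v$ as is, truncate first (proving the $H^1$ convergence via Stampacchia's theorem and dominated convergence — a cleaner argument that works directly for arbitrary $H^1$ functions), then smooth, and then re-kill the smooth approximant $\psi$ near $Z(v)$ with a cutoff $\chi$. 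Your route has the advantage of sidestepping the delicate point hidden in the paper's reduction step — replacing $v$ by a smooth approximant changes $Z(v)$, which the paper leaves implicit — at the cost of the extra cutoff. One remark on that cutoff: the technical worry in your final paragraph is unfounded. For the lemma to make sense, $Z(v)$ must be a well-defined closed subset of $\overline B_1$, i.e.\ $v$ must be continuous up to $\overline B_1$; this is precisely the situation in which the lemma is applied in the paper (to $\eta v$ with $v\in C^{1,\alpha}$ and $\eta\in C^\infty_c$). Then $\{|v|\geq \delta/2\}$ and $Z(v)$ are disjoint compact subsets of $\overline B_1$, hence at a positive distance $d>0$, and mollifying the Lipschitz Urysohn function $\max\bigl(0,\min(1,\,2-3\,\mathrm{dist}(\cdot,\{|v|\geq\delta/2\})/d)\bigr)$ at a scale smaller than $d/3$ already produces a global $\chi\in C^\infty(\R^n)$ with the required properties; no locally finite cover or partition of unity is needed. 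Also, requiring $\mathrm{supp}\,\chi\subset \overline B_1$ is unnecessary, since $\chi\psi$ lives on $\overline B_1$ and its support there is automatically a compact subset of $\overline B_1$ disjoint from $Z(v)$.
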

\proof
Without loss of generality we can assume that $v\in C^\infty(\overline B_1)$ by density. Let us define $\eta\in C^\infty(\R)$ with $\eta(t)=0$ for $|t|<1$ and $\eta(t)=t$ for $|t|>2$. Fixed $\varepsilon>0$, we consider $v_\varepsilon=\varepsilon\eta(v/\varepsilon)$. Hence it is easy to check that $v_\varepsilon$ strongly converges to $v$ in $H^1(B_1)$ and that the support of $v_\varepsilon$ is compactly contained in $\overline B_1\setminus Z(v)$. In fact
\begin{equation*}
\int_{B_1}|v_\varepsilon-v|^2\leq 2\int_{B_1\cap\{|v|\leq2\varepsilon\}\setminus\{v=0\}}|v|^2\to0
\end{equation*}
and
\begin{equation*}
\int_{B_1}|\nabla v_\varepsilon-\nabla v|^2\leq c\int_{B_1\cap\{|v|\leq2\varepsilon\}\setminus\{v=0,|\nabla v|=0\}}|\nabla v|^2\to0
\end{equation*}
since $|B_1\cap\{|v|\leq2\varepsilon\}\setminus\{v=0\}|\to0$ and $|B_1\cap\{|v|\leq2\varepsilon\}\setminus\{v=0,|\nabla v|=0\}|\to0$ being $B_1\cap\{v=0,|\nabla v|\neq 0 \}$ a set with Hausdorff dimension at most $(n-1)$. 
\endproof

\subsubsection{Energy solutions in weighted Sobolev spaces}\label{sec:3.2.2}
Fixed $u$ a solution to \eqref{equv} in $B_1$, let us define the weighted Sobolev space $H^1(B_1,u^2dz)$ as the completion of $C^\infty(\overline B_1)$ with respect to the norm
\begin{equation}\label{wnorm}
\|w\|_{H^1(B_1,u^2dz)}=\left(\int_{B_1}u^2 w^2+\int_{B_1}u^2 |\nabla w|^2\right)^{1/2}.
\end{equation}
We would like to remark that the weight $u^2$ does not belong to the $A_2$-Muckenhoupt class and is \textsl{superdegenerate} on $Z(u)$ since the vanishing order of $u$ is at least 1. An important consequence of the strong degeneracy is contained in the following useful remark.
\begin{remark}\label{rem2}
The space $H^1(B_1,u^2dz)$ is actually the completion of $C^\infty_c(\overline B_1\setminus Z(u))$ with respect to the weighted norm \eqref{wnorm}. Indeed, consider $w\in C^\infty(\overline B_1)$ with finite $H^1(B_1,u^2dz)$-norm. Then, we can construct a function arbitrarily close to $w$ in the $H^1(B_1,u^2dz)$-norm which belongs to $C^\infty_c(\overline B_1\setminus Z(u))$. Let, for $\delta>0$
\begin{equation}\label{fdelta}
f_\delta(t)=\begin{cases}
0 &\mathrm{if} \  |t|\leq\delta^2\\
\log(|t|/\delta^2)/\log(1/\delta) &\mathrm{if} \ \delta^2< |t|<\delta\\
1 &\mathrm{if} \ |t|\geq\delta.
\end{cases}
\end{equation}
Then, by choosing $\delta$ small enough, $w_\delta=wf_\delta(u)$ is arbitrarily close to $w$ in the $H^1(B_1,u^2dz)$-norm. Hence, one can use the uniform continuity of $u$ to gain enough space near $Z(u)$ to perform a convolution promoting $w_\delta$ to a smooth function.
\end{remark}

\begin{Definition}[Energy solution]\label{definition.energy}
A function $w\in H^1(B_1,u^2dz)$ is an energy solution to \eqref{eqw} in $B_1$ if
\begin{equation*}
\int_{B_1}u^2 A\nabla w\cdot\nabla\phi=0,
\end{equation*}
 for any $\phi\in C^\infty_c(B_1)$ (actually test functions can be taken in $C^\infty_c(B_1\setminus Z(u))$).
\end{Definition}
\begin{Proposition}[The ratio is an energy solution]\label{theratiosolution}
Let $0<r<1$ and $u,v$ be two solutions to \eqref{equv} in $B_1$ with $Z(u)\subseteq Z(v)$. Then, the ratio $w=v/u$ belongs to $H^1(B_r,u^2dz)$ and is an energy solution to \eqref{eqw} in $B_r$.
\end{Proposition}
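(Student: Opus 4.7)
The plan consists of two main steps: showing $w = v/u$ lies in the weighted Sobolev space $H^1(B_r, u^2 dz)$, and then verifying the weak formulation of \eqref{eqw}. For the first step, I would begin with the easy observation that the lower-order term is automatic, since $u^2 w^2 = v^2 \in L^1(B_1)$. For the weighted gradient, the natural expansion $u\nabla w = \nabla v - w\nabla u$ gives the pointwise bound
\[
u^2|\nabla w|^2 \leq 2|\nabla v|^2 + 2\frac{v^2}{u^2}|\nabla u|^2 .
\]
The first summand is integrable since $v \in H^1(B_1)$, so the whole problem reduces to controlling $\int_{B_r} v^2 |\nabla u|^2/u^2$, exactly the kind of quantity the Hardy inequality of Lemma \ref{hardy} is designed to bound.

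To apply Lemma \ref{hardy}, whose test functions must sit in $C^\infty_c(B_1 \setminus Z(u))$, I would approximate a localized version of $v$. Fixing a cutoff $\chi \in C^\infty_c(B_{(1+r)/2})$ with $\chi \equiv 1$ on $B_r$, the function $\chi v$ lies in $H^1_0(B_1)$ and satisfies $Z(\chi v) \supseteq Z(v) \supseteq Z(u)$. Applying Lemma \ref{rem1} to $\chi v$, together with an interior cutoff away from $\partial B_1$, produces a sequence $\varphi_n \in C^\infty_c(B_1 \setminus Z(u))$ converging to $\chi v$ in $H^1(B_1)$. Plugging $\varphi_n$ into Lemma \ref{hardy} and invoking Fatou on a pointwise a.e.\ convergent subsequence yields
\[
\int_{B_r} \frac{|\nabla u|^2}{u^2}\, v^2 \leq \int_{B_1} \frac{|\nabla u|^2}{u^2}\, (\chi v)^2 \leq \left(\frac{2\Lambda}{\lambda}\right)^2 \int_{B_1} |\nabla(\chi v)|^2 < +\infty,
\]
which together with the pointwise bound above gives $w \in H^1(B_r, u^2 dz)$.

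For the equation, I would use Remark \ref{rem2} to restrict the class of admissible test functions to $\phi \in C^\infty_c(B_r \setminus Z(u))$. Away from $Z(u)$ the pointwise identity $u^2 A\nabla w = uA\nabla v - vA\nabla u$ holds, so
\[
\int u^2 A\nabla w \cdot \nabla \phi \;=\; \int uA\nabla v \cdot \nabla \phi \,-\, \int v A\nabla u \cdot \nabla \phi .
\]
Since $u\phi$ and $v\phi$ are both smooth with compact support in $B_1$, they are admissible test functions for the original equations, yielding $\int A\nabla v \cdot \nabla(u\phi) = \int A\nabla u \cdot \nabla(v\phi) = 0$. Expanding each by the product rule produces the cross terms $\int \phi\, A\nabla v \cdot \nabla u$ and $\int \phi\, A\nabla u \cdot \nabla v$, which cancel by symmetry of $A$, leaving $\int u^2 A\nabla w \cdot \nabla \phi = 0$ as required.

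The main technical obstacle is the approximation step: constructing a sequence in $C^\infty_c(B_1 \setminus Z(u))$ converging in $H^1$ to $\chi v$ despite the potential irregularity of $Z(u)$. This is precisely where the hypothesis $Z(u) \subseteq Z(v)$ enters in an essential way, ensuring that Lemma \ref{rem1} applied to (a cutoff of) $v$ delivers approximants that vanish on a neighbourhood of $Z(u)$; the superdegeneracy of the weight $u^2$, via Remark \ref{rem2}, then permits testing the final equation only against such functions. Once this approximation is carried out carefully, the remaining computations are essentially an integration by parts.
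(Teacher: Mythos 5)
Your integration-by-parts verification of the weak formulation is correct, and the Hardy-inequality bound on $\int v^2|\nabla u|^2/u^2$ via an approximating sequence for $\chi v$ is the right estimate. The gap is in the first step: you establish that the \emph{norm} $\int_{B_r} u^2 w^2 + \int_{B_r} u^2|\nabla w|^2$ is finite and conclude membership of $w$ in $H^1(B_r,u^2dz)$, but this implication does not hold here. The space $H^1(B_1,u^2dz)$ is defined (see \eqref{wnorm} and Remark~\ref{rem2}) as the \emph{completion} of $C^\infty(\overline B_1)$ --- equivalently of $C^\infty_c(\overline B_1\setminus Z(u))$ --- with respect to the weighted norm, not as the set of measurable functions with weak gradient and finite weighted $L^2$-norms. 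Since the weight $u^2$ vanishes to order at least two it is not $A_2$-Muckenhoupt, and the paper's proof opens precisely with the remark that the $(H=W)$ property may fail in this superdegenerate setting; see also \cite[Remark 2.3]{SirTerVit1}. So finiteness of the norm does not, by itself, put $w$ in the space.

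The paper closes this gap by exhibiting explicit approximants $w_\delta=\eta f_\delta(u)\,w$, where $f_\delta$ is the logarithmic cutoff of \eqref{fdelta} and $\eta$ is a spatial cutoff identically $1$ on $B_r$. These functions are supported away from $Z(u)$, so after mollification they lie in $C^\infty_c(\overline B_1\setminus Z(u))$; the Hardy inequality you invoked then gives a bound on their weighted $H^1$-norms that is uniform in $\delta$, and together with $w_\delta\to w$ this yields $w\in H^1(B_r,u^2dz)$. Note that $w_\delta=(\eta f_\delta(u)\,v)/u$, so your approximants $\varphi_n\to\chi v$ are essentially the numerators of the functions you would need: the missing step is to divide by $u$ and verify that the resulting sequence is bounded (and convergent to $w$) in the \emph{weighted} norm, not merely that its limit has finite weighted norm. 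Once that is done, your second step --- restricting to test functions $\phi\in C^\infty_c(B_r\setminus Z(u))$ via Remark~\ref{rem2}, expanding $u^2A\nabla w=uA\nabla v-vA\nabla u$, and cancelling the cross terms by the equations for $u$ and $v$ together with the symmetry of $A$ --- matches the paper's argument and is complete.
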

\proof
The proof is an adaptation of \gio{\cite[Proposition 2.10]{SirTerVit2}}. We would like to remark that in this setting the $(H=W)$ property does not necessarily hold (again this is due to the fact that the weight does not belong to $A_2$, see \cite[Remark 2.3]{SirTerVit1}). Hence, fixed $r<1$, we have to construct a $C^\infty_c(\overline B_r\setminus Z(u))$-candidate which is arbitrarily close to $w$ in the weighted norm. For $\delta>0$, and given $\eta\in C^\infty_c(B_1)$ with $\eta\equiv 1$ in $B_r$ and $0\leq\eta\leq 1$, let us define $\varphi_\delta=\eta f_\delta(u)$ where $f_\delta$ is defined in \eqref{fdelta}.
First, we prove that $w_\delta=\varphi_\delta w$ are uniformly bounded in $H^1(B_1,u^2dz)$ with respect to $\delta$, and they converge to $w$ in $B_r$. Since the approximations have compact support away from $Z(u)$, it is enough to prove the finiteness of the weighted norm in order to deduce a uniform bound in $\delta$. Indeed, by applying the Hardy inequality of Lemma \ref{hardy}, we get
\begin{equation*}
\int_{B_1}u^2|f_\delta|^2|\nabla(\eta w)|^2\leq c\left(\int_{B_1}|\nabla(\eta v)|^2+\int_{B_1}\frac{|\nabla u|^2}{u^2}(\eta v)^2\right)\leq c\int_{B_1}|\nabla(\eta v)|^2.
\end{equation*}
The validity of the Hardy inequality \eqref{hardyineq} for the function $\eta v\in H^1_0(B_1)$ is due to Lemma \ref{rem1} since $Z(u)\subseteq Z(v)$. Then, one can regularize the function $w_\delta$ by convolution and deduce that $w\in H^1(B_r,u^2dz)$. Moreover, the fact that it satisfies the weak formulation for \eqref{eqw} is trivial since the equation holds in $B_1\setminus Z(u)$ and since Remark \ref{rem2} allows us to take test functions with compact support in $B_1\setminus Z(u)$.
\endproof

\subsection{The higher order boundary Harnack principle on and across regular zero sets}
In this Subsection we would like to prove Schauder estimates up to regular boundaries and across regular zero sets for ratios of solutions to some elliptic problems. First, we show that the results in Section \ref{Section2} imply an alternative proof of the \textsl{higher order boundary Harnack principle} proved in \cite{DesSav}. Then, in the setting of \cite{LogMal1,LogMal2,LinLin}, we will prove Schauder regularity for the ratio of two solutions which share the nodal set across its regular part.

\subsubsection{The higher order boundary Harnack principle}\label{subs:HOBH}
Consider the upper side of a regular hypersurface $\Gamma$ embedded in $\R^n$, and localize the problem as in \eqref{localizeGamma}. Then, let $u,v$ be solutions to \eqref{BHconditions}; that is,
\begin{equation*}
\begin{cases}
-\mathrm{div}\left(A\nabla v\right)=f &\mathrm{in \ }\Omega_\varphi^+\cap B_1,\\
-\mathrm{div}\left(A\nabla u\right)=g &\mathrm{in \ }\Omega_\varphi^+\cap B_1,\\
u>0 &\mathrm{in \ }\Omega_\varphi^+\cap B_1,\\
u=v=0, \qquad \partial_{\nu^+} u<0&\mathrm{on \ }\Gamma\cap B_1.
\end{cases}
\end{equation*}
\gio{
Let $k\in\mathbb N$ and $\alpha\in(0,1)$, and suppose $A$, $f$, $g\in C^{k,\alpha}(\Omega_\varphi^+\cap B_1)$, and $\varphi\in C^{k+1,\alpha}(B'_1)$. Then, by standard elliptic regularity, for every $r \in (0,1)$ we have $u,v\in C^{k+1,\alpha}(\Omega_\varphi^+\cap B_r)$, and consequently, their ratio $w=v/u$ belongs to $C^{k,\alpha}(\Omega_\varphi^+\cap B_r)$. In light of the higher-order boundary Harnack principle established in \cite{DesSav}, we deduce that the ratio actually belongs to $C^{k+1,\alpha}(\Omega_\varphi^+\cap B_r)$, indicating it shares the same regularity as the boundary $\Gamma$. The results in \cite{DesSav} are derived under the condition $g=0$, eliminating the necessity of the non-degeneracy condition $\partial_{\nu^+} u<0$ on $\Gamma\cap B_1$, which is, in fact, a direct consequence of the Boundary Point Principle, also known as the Hopf-Oleinik Lemma. Although the latter result is classical, we refer to \cite{ApuNaz,KozKuz} for its validity when $k=0$, i.e., when the boundary is $C^{1,\alpha}$ and leading coefficients are $C^{0,\alpha}$.
}

Aim of this brief Subsection is to show that the Schauder estimates for degenerate equations of Section \ref{Section2} imply the higher order boundary Harnack principle of \cite{DesSav}.

\begin{proof}[Proof of Theorem \ref{HOBHP}]
Let us first consider the local diffeomorphism in \eqref{diffeo}. Then, up to dilations, $\tilde v=v\circ\Phi$ and $\tilde u=u\circ\Phi$ solve

\begin{equation*}
\begin{cases}
-\mathrm{div}\left(\tilde A\nabla \tilde v\right)=\tilde f &\mathrm{in \ }B_1^+,\\
-\mathrm{div}\left(\tilde A\nabla \tilde u\right)=\tilde g &\mathrm{in \ }B_1^+,\\
\tilde u>0 &\mathrm{in \ }B_1^+,\\
\tilde u=\tilde v=0, \qquad -\partial_y \tilde u<0&\mathrm{on \ }B'_1,
\end{cases}
\end{equation*}
where $\tilde f=f\circ\Phi$, $\tilde g=g\circ\Phi$ and
\begin{equation*}
\tilde A=(J_\Phi^{-1}) (A \circ\Phi) (J_\Phi^{-1})^T.
\end{equation*}
The Jacobian matrixes are defined in \S \ref{sec-curved}. We remark that, after the diffeomorphism, $\tilde f,\tilde g, \tilde A$ belong to $C^{k,\alpha}(B_1^+)$, and hence $\tilde u,\tilde v\in C^{k+1,\alpha}(B_r^+)$ for any $r<1$. The fact that $\tilde w=w\circ\Phi=\tilde v/\tilde u\in C^{k,\alpha}$, follows once we notice that
$$\frac{\tilde v}{\tilde u}=\frac{\tilde v}{y}\left(\frac{\tilde u}{y}\right)^{-1}.$$
Indeed, by combining that $\tilde u>0$ in $B_1^+$ with $-\partial_y \tilde u<0$ on $B'_1$, we deduce that $\tilde u/y\geq\mu>0$ up to $\Sigma$. Finally, the result follows since, by Lemma \ref{lem1}, the ratio is the product of two $C^{k,\alpha}$ functions. In order to prove that the ratio $\tilde w$ actually belongs to $C^{k+1,\alpha}$, we use the fact that is solution to
\begin{equation}\label{eqdessav}
-\mathrm{div}\left(y^2\overline A\nabla \tilde w\right)=y^2\frac{\overline f}{y},
\end{equation}
and hence Corollary \ref{Cork} implies the desired regularity, and \eqref{extremality2} implies the boundary condition
\begin{equation*}
2\overline A\nabla\tilde w\cdot\vec e_n+\overline f=0\qquad\mathrm{on \ }\Sigma.
\end{equation*}
In \eqref{eqdessav}, the new matrix and forcing term are
$$\overline A=(\tilde u/y)^2\tilde A,\qquad \overline f=(\tilde u/y)\tilde f-(\tilde v/y)\tilde g=(\tilde u/y)\left(\tilde f-\tilde g\tilde w\right)$$
which belong to $C^{k,\alpha}(B_r^+)$. Nevertheless coefficients of $\overline A$ do not vanish on $\Sigma$ and hence the matrix is still uniformly elliptic, thanks to the non degeneracy condition $\tilde u/y\geq\mu>0$ up to $\Sigma$.

In order to prove that $\tilde w$ belongs to $H^{1,2}(B_1^+)=H^{1}(B_1^+,y^2dz)$ one uses the Hardy's inequality contained in \gio{\cite[Lemma B.1]{SirTerVit2}}.

Eventually, by \eqref{normaltransf}, one obtains the validity of the boundary condition \eqref{boundaryHOBH}.
\end{proof}

\subsubsection{The gluing Lemma across regular zero sets and covering}
In this Subsection, we show that the ratio of two solutions $u,v$ of \eqref{equv} enjoys actually Schauder regularity across the regular part $R(u)$ of the nodal set, depending on the regularity of leading coefficients. In other words, the ratio maintains the same regularity of $u$ and $v$ also across $R(u)$. Let us be more precise: let us consider a fixed $u\in H^1(B_1)$ solution to \eqref{equv} with $A\in C^{k,\alpha}(B_1)$ for some $k\in\mathbb N$, $\alpha\in(0,1)$ and $S(u)\cap B_1=\emptyset$. Then, let us consider solutions $v$ to \eqref{equv} in $B_1$ with $Z(u)\subseteq Z(v)$. We are going to prove Theorem \ref{schauderR(u)}.

\begin{proof}[Proof of Theorem \ref{schauderR(u)}]
The desired regularity for the ratio is obtained after localizing the problem on a point lying on $R(u)$ and then by a covering argument. Using the same notation of \S \ref{sec-curved}, $\Gamma=R(u)$ is locally described as the graph of a function $\varphi$, the upper side as its epigraph and the lower side as its hypograph. Then, the Schauder estimates for $w$ are obtained separately in $\Omega^+_\varphi\cap B_1$ and $\Omega^-_\varphi\cap B_1$ as in the higher order boundary Harnack principle of Subsection \ref{subs:HOBH}. Then, regularity across $\Gamma$ follows by Lemma \ref{gluinglemma2}. The only point to show is that actually $w$ is continuous across $\Gamma$. This is due to the following limit: taking $z_0\in\Gamma$ and small $t>0$
\begin{eqnarray*}
w(z_0+t\nu^+(z_0))&=&\frac{v(z_0+t\nu^+(z_0))}{u(z_0+t\nu^+(z_0))}\\
&=&\frac{v(z_0+t\nu^+(z_0))-v(z_0)}{t}\left(\frac{u(z_0+t\nu^+(z_0))-u(z_0)}{t}\right)^{-1}\to\frac{\partial_{\nu^+} v(z_0)}{\partial_{\nu^+} u(z_0)}
\end{eqnarray*}
as $t\to0^+$. Similarly
\begin{equation*}
w(z_0+t\nu^-(z_0))=\frac{v(z_0+t\nu^-(z_0))}{u(z_0+t\nu^-(z_0))}\to\frac{\partial_{\nu^-} v(z_0)}{\partial_{\nu^-} u(z_0)}.
\end{equation*}
Using the $C^1$ regularity of $u,v$ and the fact that the normal derivative of $u$ on $z_0\in R(u)$ is actually non zero, we are saying that the values of $w$ from above and below $R(u)$ agree
$$\lim_{t\to0^+}w(z_0+t\nu^+(z_0))=\lim_{t\to0^+}w(z_0+t\nu^-(z_0))=w(z_0).$$

The covering argument is standard once we fix $u$, and its nodal set $Z(u)$, and we observe that $\inf_{B_{1/2}\cap Z(u)}|\partial_{\nu}u|\geq L>0$. In fact, any $z\in B_{1/2}$ is either a point of $Z(u)$ or of its complement and, in both cases, it exists a small radius $r_z>0$ such that on the ball $B_{r_z}(z)$ the regularity estimate holds true. Hence, by compactness one can extract a finite covering of $B_{1/2}$ with the same property.
\end{proof}

\subsection{Gradient estimates across general zero sets in dimension $n=2$}\label{sub.n2}

Aim of this Subsection is to address, in the two-dimensional case, local gradient estimates for ratios $w=v/u$ of solutions $u,v$ to \eqref{equv} with $Z(u)\subseteq Z(v)$ and without restrictions on $Z(u)$. The main result of our study is Theorem \ref{teodim2}.

Since in the two-dimensional setting the singular set $S(u)$ is a locally finite set of isolated points (see \cite{GarLin, Han, Bers}), we can start our analysis by localizing the problem around a given singular point. First, we prove local gradient estimates for solutions to \eqref{eqnodal} which depends on the vanishing order of $u$ at the isolated singular point and then, by the gluing Lemma and a standard covering argument, we provide local $C^{1,1/N_0(r)-}$ regularity for the ratio $w=v/u$ which depends on the maximum attained by the vanishing order of $u$ in any smaller ball $\overline B_r\subset B_1$ (see \eqref{maxV}).\\
Naturally, the estimate obtained depends on the fixed nature of the nodal set $Z(u)$. Indeed, in the forthcoming work \cite{TerTorVit2}, we will show that the estimate is in fact uniform in a suitable compact class $\mathcal S_{N_0}$ of solutions with bounded frequency.

\subsubsection{Local gradient estimates on a nodal domain via conformal mapping}
Let us localize the problem around an isolated singular point.  As we have remarked in the Introduction, although the $C^{1,1-}$ regularity of the nodal curves away from singularities is a direct consequence of the implicit function theorem, proving that the same regularity holds true up to the singular points requires using the following lemma.
We would like to remark that the proof follows the ideas contained in \cite{HarWin1}  \gio{(see also  \cite{Cheng} for a similar result in the case of smooth coefficients).}

\begin{Lemma}\label{lemmacurves}
Let $n=2$ and let $u$ be a solution to \eqref{equv} in $B_1$ with $A\in C^{0,1}(B_1), A(0)=\mathbb I$. Assume that $S(u)\cap B_1=\{0\}$ and $N=\mathcal V(0,u)\in\mathbb N\setminus\{0,1\}$. Then, exactly $N$ nodal curves of class $C^{1,1-}$ meet at $0$ with equal angle $\pi/N$ and forming $2N$ nodal regions.
\end{Lemma}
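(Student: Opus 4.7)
The plan is to combine three ingredients that are classical in dimension two, following the strategy of \cite{HarWin1}: a Hartman--Wintner asymptotic expansion at the singular point $0$, an implicit function theorem argument in polar coordinates, and a bootstrap exploiting the Lipschitz regularity of $A$ to push the Hölder exponent of the nodal tangent arbitrarily close to $1$.

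First I would invoke a Hartman--Wintner expansion: since $A\in C^{0,1}$ with $A(0)=\mathbb I$ and $\mathcal V(0,u)=N\in\N$, a nontrivial solution $u$ admits
\[
u(z) = P_N(z) + O(|z|^{N+\mu}), \qquad \nabla u(z) = \nabla P_N(z) + O(|z|^{N-1+\mu})
\]
for every $\mu\in (0,1)$, where $P_N$ is a nontrivial homogeneous harmonic polynomial of degree $N$. Writing $P_N(re^{i\theta}) = c\,r^N\cos(N\theta-\theta_0)$ (the leading operator being the Laplacian because $A(0)=\mathbb I$), its nodal set is the union of $N$ straight lines through $0$ at angles $\theta_k = \theta_0/N + (2k+1)\pi/(2N)$, equispaced by $\pi/N$ and separating $\R^2$ into $2N$ open sectors on which $P_N$ alternates sign.

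Next, I would transfer the expansion to polar coordinates, writing $u(r,\theta) = c\,r^N[\cos(N\theta-\theta_0) + r^\mu R(r,\theta)]$ with $R$ bounded and Hölder continuous. For $r$ small, the implicit function theorem applied in $\theta$ near each simple root $\theta_k$ of the leading cosine yields a unique local solution $\theta = \vartheta_k(r)$ with $\vartheta_k(r)\to \theta_k$ as $r\to 0^+$ and the quantitative estimates $\vartheta_k(r)-\theta_k = O(r^\mu)$ and $r\,\vartheta_k'(r) = O(r^\mu)$. Because the expansion forces every zero of $u$ in a sufficiently small punctured disk to lie near one of the $\theta_k$'s, this exhausts all the zeros: the nodal set of $u$ in $B_r\setminus\{0\}$ consists of exactly $N$ embedded arcs $\Gamma_k = \{re^{i\vartheta_k(r)}\}$, which together with their antipodal branches split the punctured disk into $2N$ nodal regions.

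Finally, I would upgrade the regularity of each $\Gamma_k$ at the origin. Away from $0$ the arcs are $C^{1,1-}$ by the implicit function theorem applied to $\{u=0\}$, using $|\nabla u|\ne 0$ on $R(u)$ and $u\in C^{1,1-}_\loc$. At $0$, parametrizing $\Gamma_k$ by $r$, the tangent vector is $e^{i\vartheta_k(r)}(1+ir\,\vartheta_k'(r))$, which by the two estimates of the previous step depends on $r$ in a $C^{0,\mu}$ fashion up to $r=0$, with limit $e^{i\theta_k}$; hence $\Gamma_k\in C^{1,\mu}$ at the singular point, and since $\mu\in(0,1)$ is arbitrary this yields $C^{1,1-}$ regularity up to $0$. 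The main obstacle lies in producing the Hartman--Wintner expansion with $\mu$ arbitrarily close to~$1$: this requires a careful bootstrap that uses the Lipschitz regularity of $A$ essentially, and in dimension $n=2$ admits an alternative derivation through a Bers-type similarity principle, representing $u$ locally as $\mathrm{Re}(e^\varphi F)$ with $\varphi$ Hölder and $F$ holomorphic, and thereby transferring the analytic nodal structure of holomorphic functions to $u$ up to a Hölder conjugation. The equality of all $N$ angles at $0$ is a direct byproduct of the normalization $A(0)=\mathbb I$, which forces the leading profile $P_N$ to be a genuine harmonic polynomial with equispaced nodal lines.
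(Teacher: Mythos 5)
Your plan tracks the paper quite closely in spirit (both rest on a Hartman--Wintner type expansion at the singular zero), but it leaves a genuine gap exactly at the point you yourself flag as ``the main obstacle.'' The Hartman--Wintner expansion in the form you invoke is established in \cite{HarWin1,HarWin} for equations with \emph{constant} principal part, i.e.\ of the type $\Delta u = b\cdot\nabla u + cu$ with $b,c$ bounded. Expanding the divergence for $A\in C^{0,1}$ gives $a_{ij}\partial_{ij}u + b_j\partial_j u = 0$ with Lipschitz (hence \emph{variable}) second-order coefficients, and this is not directly in the scope of \cite{HarWin}. The paper removes this difficulty explicitly in its Step~1: it manufactures a $C^{1,1-}$ quasiconformal change of variables $\Theta=(X,Y)$ from a pair of $C$-conjugate solutions ($\nabla X = JC\nabla Y$ with $C=A/\sqrt{\det A}$), after which $U=u\circ\Theta^{-1}$ satisfies the clean equation $-\Delta U=\tilde F\cdot\nabla U$ with $\tilde F$ merely bounded; only then does the Cauchy integral formula of \cite{HarWin} yield the asymptotic expansion, with a log-Lipschitz (hence $C^{0,1-}$) remainder, which is what gives $C^{1,1-}$ branches rather than $C^{1,\gamma}$ for a fixed ellipticity-dependent $\gamma$. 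Your gesture toward a Bers-type similarity principle for $u$ itself would also need to be carried out (and applied to the complex gradient, not $u$), and with only $L^\infty$ Beltrami-type coefficients the classical similarity principle gives a fixed H\"older exponent rather than the $C^{0,1-}$ remainder; it is precisely the Cauchy-formula refinement that pushes the exponent arbitrarily close to~$1$.

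Granting the expansion, the rest of your argument is a legitimate alternative to the paper's Step~3. Where the paper parametrizes each branch through the rescaled Hamiltonian ODE $\dot z = i\nabla u / r^{N-1}$ and reads off $C^{1,\omega}$ regularity (with $\omega(s)=s|\log s|$) by composition, you use polar coordinates and the implicit function theorem in $\theta$ near each simple root of $\cos(N\theta-\theta_0)$ to produce graphs $\theta=\vartheta_k(r)$, and then control $\vartheta_k(r)-\theta_k=O(r^\mu)$ and $r\vartheta_k'(r)=O(r^\mu)$. That does give $\gamma_k'(r)=e^{i\vartheta_k(r)}(1+ir\vartheta_k'(r))\in C^{0,\mu}$ at the origin for every $\mu<1$, and after gluing with the interior $C^{1,1-}$ regularity this yields the claimed $C^{1,1-}$ regularity of the nodal arcs. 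So: the proposal is a valid alternative route for the parametrization step, but the central asymptotic expansion is asserted rather than derived, and filling that gap is essentially the quasiconformal reduction that the paper's proof supplies.
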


\begin{proof}
The proof follows the strategy developed in \cite{HarWin1,HelHofTer}, with some adjustments made to address the presence of Lipschitz coefficients. The proof involves first applying a quasiconformal map and then deducing regularity estimates for the branch of the nodal set close to the singularity using a Cauchy integral formula around the singular point.\\

{\it Step 1: change of coordinates.}
First, defining
$$C=\frac{A}{\sqrt{\mathrm{det}A}},\qquad F=A\nabla\left(\frac{\sqrt{\mathrm{det}A}-1}{\sqrt{\mathrm{det}A}}\right),$$
then, $u$ solves
\begin{equation}
-\mathrm{div}(C\nabla u)=F\cdot\nabla u\qquad\mathrm{in \ }B_1,
\end{equation}
$C$ still uniformly elliptic, $C(0)=\mathbb I$, $\mathrm{det}C\equiv 1$, $C\in C^{0,1}(B_1)$, $F\in L^\infty(B_1)$. Let us define for later purposes the variable coefficients as $c_{11}=a, c_{12}=c_{21}=b, c_{22}=c$ with $ab-c^2\equiv 1$ (being $a,b,c$ Lipschitz continuous functions in the unit ball). Then, let us construct a solution $Y$ to
\begin{equation}\label{eqX}
\mathrm{div}(C\nabla Y)=0\qquad\mathrm{in \ }B_{r_0},
\end{equation}
with the property that $|\nabla Y|>0$ in a given small ball $B_{r_0}$. The existence of such a solution can be proved as follows. Defining $C_r(z)=C(rz)$ for $0<r<1$, let us consider $v_r$ the unique solution to
\begin{equation*}
\begin{cases}
\mathrm{div}(C_r\nabla v_r)=0 &\mathrm{in \ } B_1,\\
v_r=x_1 &\mathrm{on \ } \partial B_1.
\end{cases}
\end{equation*}
Since the sequence $v_r$ is uniformly bounded in $C^{1,\alpha}(\overline{B_1})$, for any given $0<\alpha<1$, it converges, up to a subsequence, to $v:=x_1$, which is the unique solution to
\begin{equation*}
\begin{cases}
\Delta v=0 &\mathrm{in \ } B_1,\\
v=x_1 &\mathrm{on \ } \partial B_1.
\end{cases}
\end{equation*}
Hence, fixed $0<\delta<1$, the uniform convergence $\nabla v_r\to\vec e_1$ in $\overline B_1$ ensures that $|\nabla v_r|>1-\delta$ in $B_1$ for any $0<r\leq r_0(\delta)$. Hence, defining $Y(z)=r_0v_{r_0}(z/r_0)$, it solves \eqref{eqX} with $|\nabla Y|>1-\delta$ in $B_{r_0}$ and $Y=x_1$ on $\partial B_{r_0}$. Then, let us define $X$ by
\begin{equation}\label{systY}
\begin{cases}
X_x=b Y_x+c Y_y\\
X_y=-(a Y_x+b Y_y).
\end{cases}
\end{equation}
Let us consider
\begin{align*}
J=\left(\begin{array}{cc}
0&1\\
-1&0
\end{array}\right), \qquad \mathrm{with}\quad J^{-1}=J^T=-J,
\end{align*}
satisfying in particular $Jz=z^\bot$ for any point $z$, where $z\cdot z^\bot=0$. Hence, one can see that \eqref{systY} is equivalent to $\nabla X=JC\nabla Y$. Moreover, since $CJC=J$, $X,Y$ are both solutions to \eqref{eqX} with $|\nabla X|,|\nabla Y|>0$ in $B_{r_0}$ and are $C$-orthogonal in the sense that
\begin{equation}
\nabla X=JC\nabla Y,\qquad \nabla Y=J^{-1}C\nabla X.
\end{equation}
Moreover, being solutions to \eqref{eqX}, they belong to $C^{1,1-}(B_{r_0})$. Let us define the following diffeomorphism
$$\Theta(x,y)=(X(x,y), Y(x,y)),$$
which is of class $C^{1,1-}$. The Jacobian matrix $J_\Theta$ belongs to $C^{0,1-}$ and its determinant is
$$\mathrm{det} J_\Theta=C\nabla X\cdot\nabla X=C\nabla Y\cdot\nabla Y>0.$$
Hence one can define also the inverse
$$\Theta^{-1}(X,Y)=(x(X,Y),y(X,Y)),$$
which is still of class $C^{1,1-}$. Let us now define $u=U\circ\Theta$; that is,
\begin{equation}
U(X,Y)=u(x,y).
\end{equation}
Then, $U$ solves
\begin{equation}\label{tildeFU}
-\Delta U=\tilde F\cdot\nabla U
\end{equation}
in a small ball, where the field $\tilde F$ in the drift is still bounded and depends on $\Theta^{-1}$ and $F$. From now on we will prove properties for $U$ being the same properties inherited directly by u through composition with $\Theta$ (we will discuss in details this fact in the last part of the proof).\\

\gio{
{\it Step 2: the Cauchy integral formula.}
The proof follows the paper by Hartman and Wintner \cite{HarWin} (see also \cite[Theorem 2.1]{HelHofTer}). Before stating the result we set few notations: let $z=(x,y)=x+iy = r e^{i\theta}$, where $x,y\in \R, r=|z|$ and $\theta=\mathrm{Arg}(z)\in[0,2\pi)$. Given $U$ as before, since $\mathcal{V}(0,U)= N\geq 2$, we have
$$
U(z)= O(|z|^N),\quad |\nabla U(z)|=O(|z|^{N-1})\qquad\mbox{as }|z|\to 0^+.
$$
Now if we set $w=i\overline{\nabla U}=U_y + i U_x$, since $|w(z)|=O(|z|^{N-1})$, then by \cite[Section 7]{HarWin} the following Cauchy formula holds true
\begin{equation}\label{e:cauchy}
2\pi i \frac{w(\zeta)}{\zeta^{N-1}}=\int_{\partial B_R}\frac{w(z)}{z^{N-1}(z-\zeta)}\, dz \ - \ \int_{B_R}\frac{\Delta U(z)}{z^{N-1}(z-\zeta)}\, dx \, dy,
\end{equation}
where $R>0$ is fixed and $\zeta$ belongs to a small neighbourhood of the origin. Notice that the first term in \eqref{e:cauchy} is smooth since the integral as no singularities on the circle, and the double integral over the disk is absolutely convergent by \eqref{tildeFU}. First, we show that the right hand side in \eqref{e:cauchy} is $\log$-Lipschitz continuous with respect to $\zeta$, indeed
\begin{eqnarray*}
\left|\int_{B_R}\frac{\Delta U(z)}{z^{N-1}}\left(\frac{1}{z-\zeta_1}-\frac{1}{z-\zeta_2}\right)\, dx \, dy\right| &\leq& \int_{B_R}\frac{|\nabla U(z)|\cdot|\tilde F(z)|}{|z|^{N-1}}\frac{|\zeta_1-\zeta_2|}{|z-\zeta_1|\cdot|z-\zeta_2|}\, dx \, dy\\
&\leq& C|\zeta_1-\zeta_2|\cdot|\log |\zeta_1-\zeta_2||.
\end{eqnarray*}
Therefore, there exists a complex-valued $\log$-Lipschitz function $\xi$ (and hence belonging to $C^{0,1-}$) such that
\begin{equation}\label{xi}
\nabla U(z) = U_x(z)+iU_y(z)=\overline z^{N-1}\xi(z)=r^{N-1}e^{-i(N-1)\theta}\xi(z).
\end{equation}
Finally, integrating the last equation one gets the existence of a real-valued $\log$-Lipschitz function $\xi_0$ such that $\xi_0(0)=0$ and

\begin{equation}\label{Uprimaexp}
U(z)= \frac{r^{N}}{N}\left(\mathrm{Re}(\xi(0))\cos(N\theta)+\mathrm{Im}(\xi(0))\sin(N\theta)+\xi_0(z)\right),
\end{equation}
where ultimately
$$
\xi(0)\neq 0\quad\mbox{and}\quad|\xi_0(z)|\leq C |z|^{N+\alpha},
$$
for every $\alpha \in (0,1)$.\\
}

{\it Step 3: parametrization of the curves.}
Since $\xi(0)\neq0$, we can rewrite \eqref{Uprimaexp} as
$$U(z)= \frac{r^{N}}{N}\left(|\xi(0)|\cos(N\theta-\theta_0)+\xi_0(z)\right),
$$
for some $\theta_0\in[0,2\pi)$. So, on any of the $2N$-branches of $Z(U)$, the function
\begin{equation}
\theta(z)=\frac{1}{N}\left(\theta_0\pm\arccos(\xi_0(z)/|\xi(0)|)+ k \pi\right),\qquad\mathrm{for \ some\ }k=0,1,\dots,N,
\end{equation}
is in fact $\log$-Lipschitz. Let us parametrize a general branch with the ODE
\begin{equation}
\dot z(t)=\frac{i\nabla u}{r^{N-1}}(z(t))=i e^{-i(N-1)\theta(z(t))}\xi(z(t)),
\end{equation}
for $t\in[0,1]$. First, let us remark that $|\dot z(t)|$ is bounded on any given branch. So the parametrization $t\mapsto z(t)$ is Lipschitz continuous. Hence, thanks to the latter information and the regularity of $\xi$ and $\theta$ in the variable $z$ we get by composition that $t\mapsto z(t)$ belongs to $C^{1,\omega}$ where $\omega(s)=s|\log s|$ for $s>0$ is the $\log$-Lipschitz modulus of continuity (which imply $C^{1,1-}$).

Ultimately, all of the information obtained on the nodal lines of $U$ translates into information on the nodal lines of $u$. Indeed, having $C(0)=\mathbb I$, the $C$-orthogonality of $X$ and $Y$ is a standard orthogonality at the origin (this preserves the geometry of the crossing nodal lines at $0$) and so the number of curves at singular points and their meeting angles remain the same. Moreover, since $\Theta$ and $\Theta^{-1}$ are $C^{1,1-}$, the regularity of the branches of $Z(u)$ is $C^{1,1-}$.
\end{proof}

Now, given $u$ solution to \eqref{equv} with $S(u)\cap B_1=\{0\}$ and $\mathcal V(0,u)=N\in\mathbb{N}\setminus\{0,1\}$, we perform a linear transformation which gives $A(0)=\mathbb I$ and hence turns any nodal domain of $u$ into a asymptotically conical domain with aperture $\pi/N$, that is
$$u^*(z)=u(A^{1/2}(0)z).$$
Then,
\begin{equation*}
\mathrm{div}\left(A^*\nabla u^*\right)=0\qquad\mathrm{in \ } B_1\qquad\mbox{and}\qquad S(u^*)\cap B_1=\{0\},
\end{equation*}
where
\begin{equation}\label{Astar}
A^*(z)=A^{-1/2}(0)A(A^{1/2}(0)z)A^{-1/2}(0)|\det A^{1/2}(0)|,\qquad \mathrm{with \ }A^*(0)=\mathbb I.
\end{equation}
Notice that every connected nodal component $\Omega_{u}$ turns into $\Omega_{u^*}=\Omega_{\pi/N}$ which stands for a nodal region of $u^*$ which is asymptotic to a cone $C_{\pi/N}$ of aperture $\pi/N$ centered at $0$ by Lemma \ref{lemmacurves} (see Figure \ref{figure1}).\vspace{-1cm}

\begin{figure}[h]
\begin{tikzpicture}[scale =1.3]
 \begin{scope}
\clip  plot [smooth,rotate=60] coordinates {(0,0)  (0.09,0.55) (0.4,1.17) (1,0.8) (1.4,1.2) (2.4,0.6) (2.13,0.54) (2.4,-0.03)};
\fill[gray!30] (1.5,0) arc (0:360:1.5cm and 1.5cm);
\end{scope}
\begin{scope}
\clip (0,0) -- (0,1.5)-- (1.5,1.5)--(1.5,0);
\fill[gray!30] (1.5,0) arc (0:180:1.5cm and 1.5cm);
\end{scope}
\begin{scope}
\clip plot [smooth] coordinates {(0,0)  (0.09,0.55) (0.4,1.01) (1,1.1) (1.4,1.6) (2.4,1) (2.13,0.54) (2.4,-0.03)};
\fill[white] (1.5,0) arc (0:360:1.5cm and 1.5cm);
\end{scope}
\begin{scope}
    \clip (1.7,0) arc (0:360:1.7 and 1.7);
\draw [black,thick] plot [smooth] coordinates {(0,0)  (0.09,0.55) (0.4,1.01) (1,1.1) (1.4,1.6) (2.4,1) (2.13,0.54) (2.4,-0.03)};
\draw [thick,->] (0.4,1.01)--(0.41,1.02) node[anchor=north west] {$\scriptstyle \gamma_2$};
\draw [black,thick] plot [smooth,rotate=+120] coordinates {(0,0)  (0.09,0.55) (0.4,1.01) (1,1.1) (1.4,1.6) (2.4,1) (2.13,0.54) (2.4,-0.03)};
\draw [black,thick] plot [smooth,rotate=-120] coordinates {(0,0)  (0.09,0.55) (0.4,1.01) (1,1.1) (1.4,1.6) (2.4,1) (2.13,0.54) (2.4,-0.03)};
\draw [black,thick] plot [smooth,rotate=-60] coordinates {(0,0)  (0.09,0.55) (0.4,1) (1,1.5) (1.4,1.2) (2.4,1) (2.13,0.54) (2.4,-0.03)};
\draw [black,thick] plot [smooth,rotate=60] coordinates {(0,0)  (0.09,0.55) (0.4,1.17) (1,0.8) (1.4,1.2) (2.4,0.6) (2.13,0.54) (2.4,-0.03)};
\draw [thick,->,rotate=60] (0.395,1.17)--(0.41,1.18) node[anchor=north east ] {$\scriptstyle \gamma_1$};
\draw [black,thick] plot [smooth,rotate=-180] coordinates {(0,0)  (0.07,0.45) (0.35,1.07) (0.8,0.7) (1.05,1.1) (2.25,0.5) (2.03,0.54) (2.4,-0.03)};
\end{scope}
      \draw[black,dashed] (-1.7,-0.9) -- (1.7,0.9);
      \draw[black,dashed] (-1.7,0.9) -- (1.7,-0.9);
      \draw[black,dashed] (0,-1.7) -- (0,1.7);
      \draw (1.5,0) arc (0:360:1.5cm and 1.5cm);
      \draw (0.1,-0.05) node[anchor=north] {$\scriptstyle 0$};
      \draw (-0.31,0.9) node {$\Omega_{\pi/N}$};
      \draw (-1.1,1.1) node[anchor=south] {$C_{\pi/N}$};
      \draw[<->] (-0.5,0.5*0.9/1.7) arc (120:90:1 and 1.6);
      \draw (-0.16,0.29) node {$\scriptscriptstyle{\frac{\pi}{N}}$};
      \draw (1.7,0.5) node {$B_1$};
      \draw (-1.8,0.09) node {$Z(u)$};
           \end{tikzpicture}
       \caption{This picture represents the general scenario near singular points in $\R^2$ (in this case, the asymptotic cone corresponds to $C_{\pi/N}$ with $N=3$).
}
\label{figure1}
       \end{figure}
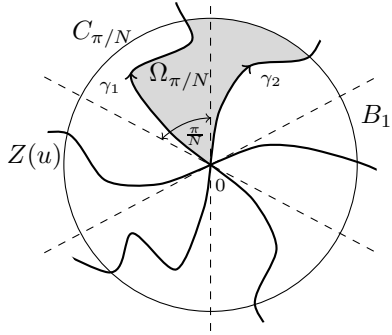

Assuming $A(0)=\mathbb I$, the idea now is to compose the solution $u$ to \eqref{equv} with a conformal mapping which opens a conical nodal region of angle $\gamma=\pi/N$ up to $\pi$; that is,
\begin{equation*}
\Psi(z)=z^N=(x+iy)^N=\zeta
\end{equation*}
with inverse given by $\Psi^{-1}(\zeta)=\zeta^{1/N}=z$, where the latter is well defined in the following way: if $\zeta=\rho e^{i\theta}$ where $\rho=|\zeta|>0$ and $\theta=\mathrm{Arg}(\zeta)\in[0,2\pi)$, then $\zeta^{1/N}=\rho^{1/N}e^{i\theta/N}$. Let us remark here that the complex power function $z^\alpha$ with $\alpha\in(0,1)$ is well defined up to removing a half line starting from $0$ and on compact subsets of this domain it is $\alpha$-H\"older continuous up to the origin; for instance locally in $\mathbb C\setminus\{(\rho,2\pi-\delta) \ : \ \rho>0\}$ for a chosen small $\delta>0$. After applying the conformal mapping, one can see that there exists a small radius $R>0$ such that $\Psi^{-1}(\Omega_{\pi}\cap B_R)\subseteq \Omega_{\pi/N}\cap B_1$. We are going to show now that $\Psi^{-1}(\partial\Omega_{\pi}\cap B_R)=\partial\Omega_{\pi/N}\cap B_{R^{1/N}}$. Let us consider two parametrizations $\gamma_1(t)$ and $\gamma_2(t)$ for $t\in[0,1]$ of the branches meeting at the vertex $0$ which form the boundary of $\Omega_{\pi/N}$. Then $\gamma_1,\gamma_2$ are of class $C^{1,1-}$ by Lemma \ref{lemmacurves} and we can assume that $\gamma_1(0)=\gamma_2(0)=0$ and the tangent vectors to the arcs at the vertex are $\gamma_1'(0)=\vec e_1=(1,0)$ and $\gamma_2'(0)=\vec v\neq\vec e_1$ with angle between $\vec v$ and $\vec e_1$ given by $\pi/N$ (see again Figure \ref{figure1}). Then, adapting the proof of \cite[Proposition 2.7]{HelHofTer}, the following result holds true.
\begin{Lemma}\label{openingboundary}
The boundary $\Psi(\partial\Omega_{\pi/N}\cap B_{R^{1/N}})$ can be parametrized by a curve of class $C^{1,1/N-}$ given by the juxtaposition of the arcs $\tilde\gamma_i(t)=(\gamma_i(t^{1/N}))^N$ for $i=1,2$ which are both of class $C^{1,1/N-}$, where the juxtaposition of $\gamma_i$ parametrizes the boundary $\partial\Omega_{\pi/N}\cap B_1$.
\end{Lemma}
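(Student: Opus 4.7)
The plan is to analyze the degeneracy of $\Psi(z)=z^N$ at the vertex $0$ and use the substitution $s=t^{1/N}$ to cancel the vanishing derivative of $\Psi\circ\gamma_i$ at the origin. Away from $0$, both $\gamma_i$ and $\Psi$ are smooth, so the delicate point is the regularity up to the vertex.

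First, I identify $\gamma_i\colon[0,1]\to\C$ with $\gamma_i(0)=0$ and unit complex tangent $c_i$ at $0$, where (after the preliminary rotation ensured by $A(0)=\mathbb I$) one has $c_1=1$ and $c_2=e^{i\pi/N}$ reflecting the opening angle $\pi/N$ of $\Omega_{\pi/N}$. By Lemma~\ref{lemmacurves} each arc is of class $C^{1,1-}$, so for every $\alpha\in(0,1)$ there exists $C_\alpha>0$ with
$$\gamma_i(s)=c_i s+r_i(s),\qquad |r_i(s)|\le C_\alpha s^{1+\alpha},\qquad |r_i'(s)|\le C_\alpha s^{\alpha}.$$

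Second, I expand $\tilde\gamma_i(t)=(\gamma_i(t^{1/N}))^N$ by the binomial formula. Setting $s=t^{1/N}$,
$$(\gamma_i(s))^N=c_i^N s^N\Bigl(1+\tfrac{r_i(s)}{c_i s}\Bigr)^N=c_i^N s^N+O(s^{N+\alpha}),$$
hence $\tilde\gamma_i(t)=c_i^N t+O(t^{1+\alpha/N})$. Differentiating and using the chain rule,
$$\tilde\gamma_i'(t)=(\gamma_i(s))^{N-1}\gamma_i'(s)\,s^{1-N}=c_i^N+O(s^{\alpha})=c_i^N+O(t^{\alpha/N}),$$
which shows $\tilde\gamma_i\in C^{1,\alpha/N}$ up to $t=0$. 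Letting $\alpha\to 1^-$ gives $\tilde\gamma_i\in C^{1,1/N-}$. Away from $0$, the map $\Psi$ is a non-vanishing holomorphism and $\gamma_i\in C^{1,1-}$, so the arcs retain full $C^{1,1-}$ regularity on every interval $[\delta,1]$.

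Third, I glue the two arcs. Since $c_1^N=1$ and $c_2^N=e^{i\pi}=-1$, the one-sided tangents at the vertex are opposite: $\tilde\gamma_1'(0)=1$ and $\tilde\gamma_2'(0)=-1$. After reversing the parametrization of $\tilde\gamma_2$, both one-sided tangents at $t=0$ coincide with $+1$, producing a single $C^1$ curve along the real axis. The modulus of continuity of its derivative is inherited from the two branches, yielding $C^{1,1/N-}$ regularity of the whole concatenation. By construction this curve parametrizes $\Psi(\partial\Omega_{\pi/N}\cap B_{R^{1/N}})$.

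The main obstacle is tracking the precise Hölder exponent through the reparametrization: one must ensure that the binomial remainder $O(s^{N+\alpha})$ translates sharply into $O(t^{1+\alpha/N})$ both for the curve and, after differentiation, for its tangent, and verify that the limiting exponent $1/N$ is not spoiled at the junction where the two arcs meet. This is exactly the content of the computation above and is parallel to the argument in \cite[Proposition~2.7]{HelHofTer}, to which we appeal for the technical details.
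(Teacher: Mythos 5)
Your argument follows essentially the same route as the paper: compute $\tilde\gamma_i'(t)$ by the chain rule, observe that it factors as $\gamma_i'(s)\,(\gamma_i(s)/s)^{N-1}$ with $s=t^{1/N}$, evaluate the limit at the vertex to get $\bigl(\gamma_i'(0)\bigr)^N$, and note that the opening angle becomes $\pi$; the gluing into a single $C^1$ curve after reversing one parametrization is identical to the paper's conclusion. There is, however, a genuine gap in the way you pass from $C^{1,1-}$ regularity of $\gamma_i$ to $C^{1,1/N-}$ regularity of $\tilde\gamma_i$. Your expansion records only \emph{pointwise} bounds at the vertex, $|r_i(s)|\le C_\alpha s^{1+\alpha}$ and $|r_i'(s)|\le C_\alpha s^{\alpha}$, and the binomial calculation then yields only the pointwise estimate $\tilde\gamma_i'(t)-c_i^N=O(t^{\alpha/N})$ at $t=0$. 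This is strictly weaker than $\tilde\gamma_i'\in C^{0,\alpha/N}$ near $0$: a function can satisfy $g(t)-g(0)=O(t^\beta)$ while failing to be $\beta$-H\"older on any neighborhood of $0$ (think of $t^\beta\sin(1/t)$), and your patch-in of interior smoothness away from $0$ does not by itself repair this, because the interior derivative bounds may degenerate faster than $t^{\alpha/N-1}$ as $t\to 0^+$. The paper avoids this by working with \emph{genuine} H\"older seminorms on both factors: $\gamma_i'\in C^{0,1-}$ by hypothesis, and $\gamma_i(\tau)/\tau\in C^{0,1-}$ on the whole interval by Lemma \ref{lem1} (not merely at $\tau=0$), so their product $G(s)=\gamma_i'(s)(\gamma_i(s)/s)^{N-1}$ is $C^{0,1-}$ as a product of bounded H\"older functions; since $t\mapsto t^{1/N}$ is $C^{0,1/N}$ on $[0,1]$, the composition $\tilde\gamma_i'=G(t^{1/N})$ is $C^{0,1/N-}$. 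To close the gap you should replace your pointwise remainder estimates with the full two-point H\"older bound on $\gamma_i(s)/s$ (Lemma \ref{lem1} gives exactly this), and then invoke the algebra of bounded H\"older functions and the H\"older regularity of the reparametrization $t\mapsto t^{1/N}$.
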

\proof
By a computation on both the arcs $i=1,2$
\begin{equation*}
\tilde\gamma_i'(t)=\gamma_i'(t^{1/N})\left(\frac{\gamma_i(t^{1/N})}{t^{1/N}}\right)^{N-1}.
\end{equation*}
Since $\gamma_i$ belongs to $C^{1,1-}$ then $\gamma'_i$ belongs to $C^{0,1-}$. Nevertheless, having $\gamma_i(0)=0$ we have by Lemma \ref{lem1} that $\gamma_i(\tau)/\tau$ belongs to $C^{0,1-}$. Then, composing with $t^{1/N}$ we obtain that $\tilde\gamma'_i$ belongs to $C^{0,1/N-}$. Finally, we remark that the tangents to the new arcs at the vertex are given by
\begin{equation*}
\tilde\gamma'_i(0)=\lim_{t\to0^+}\left(\frac{\gamma_i(t^{1/N})}{t^{1/N}}\right)^N=\left(\gamma'_i(0)\right)^N,
\end{equation*}
and the angle between $\vec v^N$ and $\vec e_1^N$ is $\pi$. Hence, the curve given by
\begin{equation*}
\tilde\gamma(t)=\begin{cases}
\tilde\gamma_2(-t) &\mathrm{if \ } t\in[-1,0]\\
\tilde\gamma_1(t) &\mathrm{if \ } t\in[0,1]
\end{cases}
\end{equation*}
is a $C^{1,1/N-}$-parametrization of the new boundary.
\endproof

Finally, we can prove gradient estimate on a nodal domain of $u$ for solutions to \eqref{eqnodal}.

\begin{proof}[Proof of Theorem \ref{teoBnodal}]
Let $a\in \R$ be such that $|u|^a \in L^1(B_1)$ and $w$ be a solution to \eqref{eqnodal}. Then the first thing to do is to compose with the linear transformation which turns a nodal domain of $u$ into $\Omega_{\pi/N}$ and does not affect regularity of solutions. Let us assume without loss of generality that $u>0$ in the considered $\Omega_u$. Hence $w^*(z)=w(A^{1/2}(0)z)$ solves
\begin{equation}\label{eqnodal2}
\mathrm{div}\left((u^*)^a B^*\nabla w^*\right)=0\qquad\mathrm{in \ } \Omega_{\pi/N}\cap B_1,
\end{equation}
in the sense of a $H^1(\Omega_{\pi/N}\cap B_1,u^*(z)^adz)$-function such that
\begin{equation*}
\int_{\Omega_{\pi/N}\cap B_1}(u^*)^aB^*\nabla w^*\cdot\nabla\phi=0,
\end{equation*}
for any test function belonging to $C^{\infty}_c(B_1)$. The new matrix is given by
\begin{equation*}
B^*(z)=A^{-1/2}(0)B(A^{1/2}(0)z)A^{-1/2}(0)|\det A^{1/2}(0)|,\qquad \mathrm{with \ }B^*(0)=\mathbb I
\end{equation*}
since $A(0)=B(0)$. From now on, for the sake of simplicity, we indicate $A^*,B^*,u^*,w^*$ by $A,B,u,w$.

Thus, by considering just one nodal region, $u$ satisfies
\begin{equation*}
\begin{cases}
\mathrm{div}\left(A\nabla u\right)=0 &\mathrm{in \ }\Omega_{\pi/N}\cap B_1\\
u>0 &\mathrm{in \ }\Omega_{\pi/N}\cap B_1\\
u=0 &\mathrm{on \ }\partial\Omega_{\pi/N}\cap B_1.
\end{cases}
\end{equation*}
Now, let $\overline u, \overline w$ be respectively the compositions of $u$ and $w$ with $\Psi^{-1}$; that is,
\begin{equation}\label{compconf}
\overline u=u\circ\Psi^{-1},\qquad \overline w=w\circ\Psi^{-1}.
\end{equation}
Then, it is easy to see that actually $\overline u$ is solution to
\begin{equation*}
\mathrm{div}\left(\overline A\nabla \overline u\right)=0 \quad\mathrm{in \ }\Omega_{\pi}\cap B_R,\qquad
\overline u>0 \quad\mathrm{in \ }\Omega_{\pi}\cap B_R,\qquad
\overline u=0 \quad\mathrm{on \ }\partial\Omega_{\pi}\cap B_R,
\end{equation*}
where the new matrix is uniformly elliptic with same constants $\lambda,\Lambda$, $\overline A(0)=\mathbb I$ and it satisfies

\begin{equation}\label{matrixnew}
\overline A(\zeta)=[J_\Psi AJ_\Psi^T]\circ\Psi^{-1}(\zeta) \,|\mathrm{det} J_{\Psi^{-1}}(\zeta)|.
\end{equation}

Nevertheless, $\overline w$ solves

\begin{equation}\label{eqnodal2}
\mathrm{div}\left(\overline u^a \overline B\nabla \overline w\right)=0\qquad\mathrm{in \ } \Omega_{\pi}\cap B_R,
\end{equation}
in the sense of a $H^1(\Omega_{\pi}\cap B_R,\overline u(z)^adz)$-function such that
\begin{equation*}
\int_{\Omega_{\pi}\cap B_R}\overline u^a\overline B\nabla \overline w\cdot\nabla\phi=0,
\end{equation*}
for any test function belonging to $C^{\infty}_c(B_R)$. Here the new matrix is given by
\begin{equation*}
\overline B(\zeta)=[J_\Psi BJ_\Psi^T]\circ\Psi^{-1}(\zeta) \,|\mathrm{det} J_{\Psi^{-1}}(\zeta)|.
\end{equation*}

We stress that we have chosen a proper small radius $0<R<1$ such that $\Psi^{-1}(\Omega_{\pi}\cap B_R)\subseteq \Omega_{\pi/N}\cap B_1$ and so $\overline u$ and $\overline w$ solve the latter equations. We remark that regularity estimates away from $0$ follow by the previous analysis on the regular set. Hence, in order to ease the notation let us take $R=1$.

Denoting by $\Psi'(z)=Nz^{N-1}$ the complex derivative of $\Psi$, then
\begin{align*}
J_\Psi(z)=\left(\begin{array}{c|c}
\mathrm{Re}\Psi'(z)&-\mathrm{Im}\Psi'(z)\\\hline
\mathrm{Im}\Psi'(z)&\mathrm{Re}\Psi'(z)
\end{array}\right),
\end{align*}
and
\begin{equation*}
|\mathrm{det} J_{\Psi^{-1}}(\zeta)|=|(\Psi^{-1})'(\zeta)|^2=\frac{1}{N^2}|\zeta|^{2\frac{1-N}{N}}.
\end{equation*}

Hence, by the presence of the term $A\circ\Psi^{-1}$ in formula \eqref{matrixnew}, the coefficients of the new matrix $\overline A$ belong just to $C^{0,1/N}(\Omega_{\pi}\cap B_1)$ and no more. Let us remark here that the fact that $J_\Psi AJ_\Psi^T \,|\mathrm{det} J_{\Psi^{-1}}\circ\Psi|$ is Lipschitz continuous strongly relies on the necessary condition $A(0)=\mathbb I$ (without this condition $\overline A$ is not even continuous at $0$). Hence, regularity follows by composition since $\Psi^{-1}\in C^{0,1/N}$. Obviously, the same considerations above hold also for $\overline B$.

Nevertheless, the boundary $\partial\Omega_\pi$ is of class $C^{1,1/N-}$ by Lemma \ref{openingboundary}. Then, by classical regularity results for uniformly elliptic equations, $\overline u\in C^{1,1/N-}(\Omega_{\pi}\cap B_r)$ for any $r<1$. Then, one can compose our solutions with the standard straightening diffeomorphism $\Phi$ defined in \eqref{diffeo} which is of class $C^{1,1/N-}$; that is, $\tilde u=\overline u\circ\Phi$ and $\tilde w=\overline w\circ\Phi$. Then, up to possible dilations, $\tilde u$ solves
\begin{equation*}
\mathrm{div}\big(\tilde A\nabla \tilde u\big)=0 \quad\mathrm{in \ } B_1^+,\qquad
\tilde u>0 \quad\mathrm{in \ } B_1^+,\qquad
\tilde u=0 \quad\mathrm{on \ }B'_1,
\end{equation*}
where $\tilde A\in C^{0,1/N-}(B_1^+)$ and is defined as in \eqref{tildeA} by
\begin{equation*}
\tilde A=(J_\Phi^{-1}) (\overline A \circ\Phi) (J_\Phi^{-1})^T.
\end{equation*}
Even so, by the Hopf-Oleinik Lemma (we refer again to \cite{ApuNaz,KozKuz}), $\partial_y\tilde u>0$ on $\Sigma=\{y=0\}$ and $\tilde u\in C^{1,1/N-}(B_r^+)$ for any $r<1$. Then, proceeding as in the proof of the higher order boundary Harnack principle, $\tilde w$ is solution to
\begin{equation*}
\mathrm{div}\left(y^aC\nabla\tilde w\right)=0\qquad\mathrm{in \ } B_r^+
\end{equation*}
with
$$C=(\tilde u/y)^a\tilde B,\qquad\mathrm{and}\qquad \tilde B=(J_\Phi^{-1}) (\overline B \circ\Phi) (J_\Phi^{-1})^T,$$
which is of class $C^{0,1/N-}(B_r^+)$ by Lemma \ref{lem1} and still uniformly elliptic by the condition $\partial_y\tilde u>0$ on $\Sigma=\{y=0\}$ given by the Boundary Point principle. Hence, by Theorem \ref{EVENwithoutHA}, $\tilde w\in C^{1,1/N-}(B_{\overline r}^+)$ for any $\overline r<r$, and it satisfies
$$C\nabla\tilde w\cdot\vec e_n=0\qquad\mbox{on }\Sigma \cap B_r.$$
Up to composing back with the diffeomorphism and the conformal mapping, one obtains the desired regularity for $w$, having
\begin{equation*}
w=\overline w\circ\Psi=\tilde w\circ\Phi^{-1}\circ\Psi.
\end{equation*}
Moreover,
\begin{equation*}
B\nabla w\cdot\nu=0 \qquad \mathrm{on} \  \partial\Omega_{\pi/N}\cap B_1.
\end{equation*}
The latter boundary condition must be understood as $B\nabla w\cdot\nu_1=0$ on $\gamma_1$ and $B\nabla w\cdot\nu_2=0$ on $\gamma_2$ where $\nu_i$ is the normal vector to $\gamma_i$. Finally, the overdetermined condition at the vertex implies $B(0)\nabla w(0)=0$ which means $\nabla w(0)=0$ by uniform ellipticity of $B$.
\end{proof}

\subsubsection{Local gradient estimates for the ratio around isolated zeroes and covering}
Finally, we show that, as a consequence of Theorem \ref{teoBnodal}, we can address Theorem \ref{teodim2}.

\begin{proof}[Proof of Theorem \ref{teodim2}]
The proof can be divided into three steps.\\

{\it Step 1: gradient estimate on a nodal domain for the ratio.} First, let us localize the equation \eqref{equv} around an isolated singular point $\{0\}=S(u)\cap B_1$ with $\mathcal V(0,u)=N\in\mathbb N\setminus\{0,1\}$. Given $v$ another solution to \eqref{equv} with $Z(u)\subseteq Z(v)$, without loss of generality we may assume $A(0)=\mathbb I$. If not, as we have seen we can compose with the linear transformation related to the square root $A^{1/2}(0)$ which is symmetric and positive definite. Then, $u^*$ and $v^*$ are $L_{A^*}$-harmonic in $B_1$ with $A^*$ defined in \eqref{Astar}. This operation does not affect the regularity of our solutions and the property $Z(u^*)\subseteq Z(v^*)$ still holds true with $S(u^*)\cap B_1=\{0\}$. Hence, as we know by Proposition \ref{theratiosolution} the ratio $w=v/u$ solves
$$\mathrm{div}\left(u^2A\nabla w\right)=0\qquad\mathrm{in \ }B_1$$
and hence in particular on any nodal region $\Omega_{u}\cap B_1$. Applying Theorem \ref{teoBnodal} with $a=2$ we know that $w\in C^{1,1/N-}_{\loc}(\overline\Omega_{u}\cap B_1)$ and this hold true on any nodal component.\\

{\it Step 2: gluing the estimate across the nodal regions.} Thanks to the validity of a $C^{1,1/N-}$-estimate for the ratio $w$ on any nodal component of $u$ up to the singular vertex in the origin, we can apply the gluing Lemma \ref{gluinglemma2} getting eventually the desired local estimate in $B_1$ also across the curves. Moreover, the following boundary conditions are satisfied
$$
A\nabla w\cdot\nu=0 \quad\mathrm{on \ } \partial\Omega_{u}\cap B_1,\qquad
\nabla w(0)=0.$$
We remark that the estimate depends on $u$ and its nodal set $Z(u)$.\\

{\it Step 3: covering.} The estimate in the ball $B_{1/2}$ follows by a covering argument.
Every point $z$ in $B_{1/2}$ belongs either to $R(u)$, or $S(u)$ 
or $B_{1/2}\setminus Z(u)$. Hence, one can find a covering with small balls $B_{r_z}(z)$ where in each of them the desired estimate holds true. Hence, by compactness, one can extract a finite number of such balls for the covering.
Ultimately, the boundary condition
\begin{equation*}
A\nabla w\cdot\nu=0 \quad\mathrm{on \ } R(u)\cap B_{1},\qquad
\nabla w=0 \quad\mathrm{on \ } S(u)\cap B_{1},
\end{equation*}
follows by the uniform ellipticity of $A$ and the fact that $A(z_i)\nabla w(z_i)=0$ at any $z_i\in S(u)\cap B_{1}$.

\end{proof}

\section{A Liouville theorem for degenerate or singular problems on $\Sigma$}\label{Section4}
Aim of the Section is the proof of Theorem \ref{liouvilletheoremzero}. Through the Section we will always consider solutions $w \in H^1_\loc(\overline{\R^{n}_+},\rho dz)$ to \eqref{evenrho0} in the sense that
\begin{equation}\label{evenrho0.test}
\int_{\R^{n}_+} \rho \nabla w \cdot \nabla \phi =0,
\end{equation}
for every $\phi \in C^\infty_c(\R^{n})$. Moreover, we recall that the following hypothesis on $\rho \in L^1_\loc(\R)$ are assumed throughout the Section:
\begin{enumerate}
  \item $\rho(y)>0$ for every $y>0$;
  \item there exist $a>-1$ and $C>0$ such that
  $$
  \rho(y)\leq C(1+y^a),\quad\mbox{for every }y\in [0,+\infty).
  $$
\end{enumerate}
We start by recalling some general facts related to solutions to \eqref{evenrho0} in the following Lemmata.
\begin{Lemma}\label{caccio.sus}
 Let $w \in H^1_\loc (\overline{\R^n_+},\rho dz)$ be an entire solution to \eqref{evenrho0}. Then, there exists a universal constant $\tilde{C} >0$, such that
$$
\int_{B^+_r} \rho |\nabla w|^2 \leq \frac{\tilde{C} }{(R-r)^2}\int_{B^+_{R}\setminus B^+_r}\rho (w-\lambda)^2,
$$
for every $\lambda\in \R$, $0<r<R$.
\end{Lemma}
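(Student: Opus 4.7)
The plan is the classical Caccioppoli argument adapted to the weighted setting, using the fact that $w-\lambda$ is itself a solution to \eqref{evenrho0} (the equation and boundary condition are invariant under adding constants).

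First, I would fix a radial cutoff $\eta \in C^\infty_c(\R^{n})$ with $0 \leq \eta \leq 1$, $\eta \equiv 1$ on $B_r$, $\mathrm{supp}\, \eta \subset B_R$, and $|\nabla \eta| \leq \frac{2}{R-r}$. The test function of choice is $\phi = \eta^2(w-\lambda)$. Since $w \in H^1_\loc(\overline{\R^{n}_+}, \rho\,dz)$ and $\eta$ is compactly supported, $\phi$ belongs to $H^1(\R^{n}_+, \rho\,dz)$ with compact support in $\overline{B_R}$; by the density results recalled in the Introduction (the $(H=W)$ property when $a \in (-1,1)$, and the density of $C^\infty_c(\overline{\R^{n}}\setminus\Sigma)$ in the superdegenerate case $a \geq 1$), such a $\phi$ is admissible in the weak formulation \eqref{evenrho0.test}.

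Plugging $\phi$ into \eqref{evenrho0.test} and expanding $\nabla\phi = \eta^2 \nabla w + 2\eta (w-\lambda)\nabla \eta$, one obtains
\begin{equation*}
\int_{\R^{n}_+} \rho\, \eta^2 |\nabla w|^2 = -2\int_{\R^{n}_+} \rho\, \eta (w-\lambda)\, \nabla w \cdot \nabla \eta.
\end{equation*}
Applying Cauchy--Schwarz and the weighted Young inequality $2|ab| \leq \tfrac{1}{2} a^2 + 2 b^2$ to the right-hand side with $a = \sqrt{\rho}\,\eta|\nabla w|$ and $b = \sqrt{\rho}\,|w-\lambda||\nabla\eta|$ yields
\begin{equation*}
\int_{\R^{n}_+} \rho\, \eta^2 |\nabla w|^2 \leq \tfrac{1}{2}\int_{\R^{n}_+} \rho\, \eta^2 |\nabla w|^2 + 2 \int_{\R^{n}_+} \rho\, (w-\lambda)^2 |\nabla \eta|^2.
\end{equation*}

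Absorbing the first term on the right into the left-hand side and using the properties of $\eta$ (namely $\eta \equiv 1$ on $B_r$ and $|\nabla \eta|^2 \leq 4(R-r)^{-2}$ supported in $B_R\setminus B_r$) gives
\begin{equation*}
\int_{B_r^+} \rho\, |\nabla w|^2 \leq \frac{\tilde C}{(R-r)^2} \int_{B_R^+ \setminus B_r^+} \rho\, (w-\lambda)^2,
\end{equation*}
with $\tilde C = 16$, which is the claim. The only delicate point is the admissibility of $\phi = \eta^2(w-\lambda)$ in the weak formulation, particularly in the superdegenerate range $a \geq 1$; this is handled by truncating $\eta$ away from $\Sigma$ via a further cutoff $f_\delta$ as in Remark \ref{rem2}, testing, and passing to the limit $\delta\to 0$ using dominated convergence and the local integrability of $\rho$.
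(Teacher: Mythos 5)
Your proof is correct and follows essentially the same route as the paper's: same cutoff $\eta$, same test function $\phi=\eta^2(w-\lambda)$, with the only cosmetic difference that you absorb via Young's inequality while the paper divides through after Cauchy--Schwarz. Your extra remark on the admissibility of $\phi$ (approximating via density in $H^1_\loc$, with the $f_\delta$ truncation in the superdegenerate case) is a sound observation that the paper leaves implicit.
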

\begin{proof}
  The proof is classical but we sketch it for the sake of completeness. Let $\eta \in C^\infty_c(\R^{n})$ be a radially decreasing cut-off function such that $0\leq\eta\leq1$ and for some $0<r<R$,
  $$
 \mathrm{supp}\eta\subseteq B_R,\quad \eta\equiv 1 \mbox{ on }B_r, \quad\mbox{and}\quad|\nabla \eta|\leq \frac{2}{R-r}.
  $$
  Then, by testing \eqref{evenrho0.test} with $\phi = (w-\lambda)\eta^2$, we get
  $$
  \int_{B^+_R}\rho |\nabla w|^2 \eta^2 = -\int_{B^+_R} 2 (w-\lambda)\eta \nabla w\cdot \nabla \phi \leq \left(\int_{B^+_R}\rho  |\nabla w|^2 \eta^2\right)^{1/2}\left(\int_{B^+_R} \rho (w-\lambda)^2|\nabla \eta|^2\right)^{1/2}.
  $$
  Finally, by exploiting the definition of $\eta$, we get that
  $$
  \int_{B^+_r}\rho |\nabla w|^2 \eta^2\leq
    \int_{B^+_R}\rho |\nabla w|^2 \eta^2 \leq \frac{4}{(R-\rho)^2}\int_{B^+_R\setminus B^+_r} \rho (w-\lambda)^2.
  $$
\end{proof}
\begin{Corollary}\label{corollary2}
 Let $w \in H^1_\loc(\overline{\R^n_+},\rho dz)$ be an entire solution to \eqref{evenrho0}. Then
 \begin{enumerate}
   \item [\rm(a)] for every $i=1,\dots,n-1$, the weak derivative $\partial_{x_i} w \in H^1_\loc(\overline{\R^{n}_+},\rho dz)$ is a solution to \eqref{evenrho0};
   \item [\rm(b)] there exists $\tilde{C}>0$ such that if there exist $C>0$ and $\mu\in\R$ such that
   $$
   \int_{B_r^+} \rho w^2 \leq C r^\mu,\quad\mbox{for every }r\geq 1,
   $$
   we get for any $i=1,\dots,n-1$
   $$
   \int_{B_r^+} \rho (\partial_{x_i}  w)^2 \leq C \tilde{C} r^{\mu-2},\quad\mbox{for every }r\geq 1.
   $$
 \end{enumerate}
\end{Corollary}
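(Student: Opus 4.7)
My plan is to deduce both parts from Lemma \ref{caccio.sus} combined with the tangential translation invariance of the problem, which stems from the fact that $\rho=\rho(y)$ depends only on the normal variable $y$.

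For part (a), I would use the difference-quotient method. Fix $i\in\{1,\dots,n-1\}$ and $h\in\R\setminus\{0\}$, and set $w_h(x,y):=w(x+he_i,y)$. Because $\rho$ is $x$-independent, a change of variables in \eqref{evenrho0.test} shows that $w_h$ still solves \eqref{evenrho0}; by linearity the tangential difference quotient $D^h_i w:=(w_h-w)/h$ is an entire solution as well. I would then apply Lemma \ref{caccio.sus} to $D^h_i w$ on a pair of nested half-balls $B^+_r\subset B^+_R$ with $\lambda=0$ to obtain a bound on $\|\nabla D^h_i w\|_{L^2(B^+_r,\rho dz)}$ in terms of $\|D^h_i w\|_{L^2(B^+_R,\rho dz)}$. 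The latter is controlled uniformly in $h$ by the classical weighted estimate $\|D^h_i w\|_{L^2(K,\rho dz)}\le \|\partial_{x_i} w\|_{L^2(K',\rho dz)}$ on a slight tangential enlargement $K'\supset K$, which holds precisely because $\rho$ does not depend on $x$ (via Fubini and a change of variable in the $x_i$-direction). Weak compactness then extracts a subsequential limit in $H^1_\loc(\overline{\R^n_+},\rho dz)$ that a standard distributional argument identifies with $\partial_{x_i}w$, and passing to the limit in the weak formulation gives \eqref{evenrho0} for $\partial_{x_i}w$.

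For part (b), the estimate follows directly from Lemma \ref{caccio.sus} applied to $w$ itself with radii $r$ and $2r$ and $\lambda=0$:
\[
\int_{B_r^+}\rho(\partial_{x_i}w)^2 \;\le\; \int_{B_r^+}\rho|\nabla w|^2 \;\le\; \frac{\tilde C}{r^2}\int_{B_{2r}^+}\rho w^2 \;\le\; C\tilde C\, 2^\mu\, r^{\mu-2},
\]
where the growth hypothesis is used on $B^+_{2r}$; absorbing the factor $2^\mu$ into $\tilde C$ yields the claim uniformly in $r\ge 1$ and in $i$. I expect the only delicate point to be the uniform-in-$h$ weighted $L^2$ bound on $D^h_i w$ used in part (a); once this is in place, the rest of the argument is routine.
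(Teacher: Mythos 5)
Your proposal is correct and follows essentially the same route as the paper: part (b) is the one-line application of the Caccioppoli estimate of Lemma \ref{caccio.sus} on $B^+_r\subset B^+_{2r}$ with $\lambda=0$, and part (a) uses the tangential difference-quotient method, exploiting that $\rho=\rho(y)$ makes $D^h_i w$ another solution, combined with the uniform weighted $L^2$ bound for difference quotients and a compactness/identification argument. The only cosmetic difference is that the paper closes part (a) by additionally establishing strong $H^1_\loc(\rho\,dz)$ convergence of $w^t_i$ to $\partial_{x_i}w$ before passing to the limit in the weak formulation, whereas you pass to the limit via weak convergence alone, which here suffices since for fixed $\phi\in C^\infty_c$ the map $g\mapsto\int\rho\,g\cdot\nabla\phi$ is weakly continuous on $L^2(\rho\,dz)$.
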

\begin{proof}
  The part (b) of the result follows immediately from Lemma \ref{caccio.sus}. Indeed, by the Caccioppoli inequality we have
  $$
  \int_{B^+_r} \rho (\partial_{x_i} w)^2 \leq
\int_{B^+_r} \rho |\nabla w|^2 \leq \frac{\tilde{C} }{r^2}\int_{B^+_{2r}}\rho w^2 \leq C\tilde{C}r^{\mu-2},
$$
with $\tilde{C}>0$ as in Lemma \ref{caccio.sus}. Let us conclude the proof by showing that $\partial_{x_i} w \in H^1_\loc(\overline{\R^{n}_+},\rho dz)$ and satisfies \eqref{evenrho0.test}. Given $t\in (0,1/2)$, we set
$$
w^t_i(z)=\frac{w(z+te_i)-w(z)}{t} \in H^1_\loc(\overline{\R^{n}_+},\rho dz).
$$
Since the weight $\rho$ depends only on the variable $y$, we immediately deduce that $w^t_i$ is solution to \eqref{evenrho0} in the sense of \eqref{evenrho0.test}. First, by exploiting the relation between the first derivative and the finite difference quotient $w^t_i$, we have
\begin{align*}
\int_{B_r^+} \rho (w_i^t)^2 &\leq \int_{B_r^+}\left(\int_{0}^1 \rho (\partial_i w)^2(z+tse_i)\,ds\right)dz\\
&= \int_{0}^1 \left(\int_{B_r(-tse_i)^+} \rho (\partial_i w)^2(z)\,dz\right)ds \leq \int_{B_{r+1/2}^+} \rho (\partial_i w)^2
\end{align*}
for every $r\geq 1$ and $t \in (0,t_0)$. Therefore, since $w$ is a solution to \eqref{evenrho0}, by Lemma \ref{caccio.sus} we get
$$
\int_{B_r^+} \rho (w_i^t)^2 \leq \int_{B_{\frac32 r}^+} \rho (\partial_i w)^2  \leq \frac{\tilde{C}}{r^2} \int_{B_{2r}^+}\rho w^2
$$
for every $r\geq 1$, uniformly for $t\in (0,1/2)$. Ultimately, since $w^t_i$ is a solution to \eqref{evenrho0}, we infer that
$$
\int_{B_r^+}\rho |\nabla (w_i^t)|^2 \leq \frac{\tilde{C}}{r^2}\int_{B^+_{\frac32 r}} \rho (w_i^t)^2 \leq  \frac{\tilde{C}^2}{r^4} \int_{B_{2r}^+}\rho w^2 ,
$$
for every $r\geq 1$, which implies that $w_i^t$ is uniformly bounded in $H^1(B_r^+,\rho dz)$, for every $r\geq 1$. Therefore, by reflexivity and compact embedding, we get that, up to a subsequence, $w^t_i$ converges weakly in $H^1(B_r^+,\rho dz)$ and strongly in $L^2(B_r^+,\rho dz)$ to some $g \in H^1(B_r^+,\rho dz)$. Eventually, this function coincides with the weak derivative $\partial_{x_i}w$, namely for every $\varphi \in C^\infty_c(\R^{n})$ we have
$$
\int_{\R^n} w^t_i \varphi = \int_{\R^{n}} \varphi^{-t}_{i} w,\qquad\mbox{for every }t>0,
$$
which converges, up to a subsequence, to the definition of weak-derivative of $w$ along the direction $e_i$. Finally, by showing that $w_i^t$ converges to $\partial_{x_i} w$ strongly in $H^1_\loc(\overline{\R^n_+},\rho dz)$, we conclude that $\partial_{x_i} w$ is a solution of the desired equation. Indeed, by testing the equation satisfied by $w_i^t$ with $\phi (w_i^t - \partial_{x_i} w)$, with $\phi \in C^\infty_c(\R^n)$, we get
\begin{align*}
0 &= \int_{\R^n}\rho \nabla w_i^t \cdot \nabla (\phi(w_i^t-\partial_{x_i}w))\\
&= \int_{\R^n} \rho \left((w_i^t-\partial_{x_i} w) \nabla w_i^t \cdot \nabla \phi + \phi |\nabla w_i^t|^2 - \phi \nabla w_i^t \cdot \nabla \partial_{x_i} w\right)\\
&=o(1) + \int_{\R^n}\rho \left(|\nabla w_i^t|^2-|\nabla \partial_{x_i}w|^2\right)
\end{align*}
as $t \to 0^+$, where in the last equality we used the strong convergence in $L^2_\loc(\overline{\R^n_+},\rho dz)$ and the weak convergence in $H^1_\loc(\overline{\R^n_+},\rho dz)$. The convergence of norms implies the strong convergence in $H^1_\loc(\overline{\R^n_+},\rho dz)$.
\end{proof}
\begin{Lemma}\label{lemma3}
  Let $w \in H^1_\loc(\overline{\R^{n}_+},\rho dz)$ be a solution to \eqref{evenrho0} and suppose there exist $C,\mu>0$ such that
  \begin{equation}\label{grow}
  \int_{B_r} \rho w^2 \leq C r^\mu, \quad\mbox{for every }r\geq 1.
  \end{equation}
 Then $w$ is a polynomial in the variable $x=(x_1,\cdots,x_{n-1})\in \R^{n-1}$, with coefficients depending only on the variable $y$.
\end{Lemma}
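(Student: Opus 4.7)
The plan is to iterate the Caccioppoli--type decay on tangential derivatives established in Corollary \ref{corollary2}. The key point is that every tangential derivative $\partial_{x_i} w$ of a solution lies in the same energy class and satisfies the same equation (part (a)), and each such differentiation improves the polynomial growth exponent of the weighted $L^2$ norm by $2$ (part (b)). Consequently, after finitely many differentiations the growth exponent becomes negative; sending $r\to\infty$ then forces the corresponding derivative to vanish identically on $\R^n_+$.

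More precisely, fix an integer $k$ with $2k>\mu$ and a multi-index $\beta=(\beta_1,\dots,\beta_{n-1})\in\mathbb N^{n-1}$ with $|\beta|=k$. Applying part (a) of Corollary \ref{corollary2} successively, the mixed tangential derivative $D_x^\beta w$ belongs to $H^1_\loc(\overline{\R^n_+},\rho\,dz)$ and is a weak solution of \eqref{evenrho0}; iterating part (b) $k$ times yields the bound
$$
\int_{B_r^+}\rho\,(D_x^\beta w)^2 \leq C\tilde C^k\, r^{\mu-2k},\qquad \text{for every } r\geq 1.
$$
Since the left-hand side is monotone in $r$ and the right-hand side tends to $0$ as $r\to+\infty$, we deduce $\int_{\R^n_+}\rho\,(D_x^\beta w)^2=0$. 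Because $\rho(y)>0$ for every $y>0$, this forces $D_x^\beta w\equiv 0$ on $\R^n_+$.

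As this holds for every multi-index $\beta$ of length $k$ in the tangential variables, $w$ must be a polynomial of degree at most $k-1$ in $x=(x_1,\dots,x_{n-1})$ with coefficients depending only on $y$, which is the claim. The only non-automatic point in the plan is verifying that the iteration of Corollary \ref{corollary2}(a)–(b) is justified: at each step one needs that the new solution inherits the growth hypothesis required to apply the next iteration, but this is exactly the output of the previous step, so the scheme closes cleanly.
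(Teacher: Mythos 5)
Your proposal is correct and follows essentially the same route as the paper: iterate Corollary \ref{corollary2} on tangential derivatives until the growth exponent becomes negative, send $r\to\infty$ to kill the high-order derivatives, and conclude polynomiality in $x$. The only cosmetic difference is that you annihilate all mixed tangential derivatives of order $k$ at once via multi-indices, whereas the paper first shows $\partial_{x_1}^k w\equiv 0$ and proceeds coordinate by coordinate; both are equivalent.
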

\begin{proof}
The result follows by iterating Corollary \ref{corollary2}. Let $\partial^k_{x_1} w$ be the $k$-th derivatives of $w$ with respect to the variable $x_1$. Then, by Corollary \ref{corollary2} we get that $\partial^k_{x_1} w$ solves \eqref{evenrho0} and satisfies
$$
  \int_{B_r} \rho (\partial^k_{x_1} w)^2 \leq C(\tilde{C})^k r^{\mu-2k}, \quad\mbox{for every }r\geq 1.
  $$
  Thus, let $k\in \N$ be the first integer such that $\mu-2k<0$. Considering $r\to+\infty$ we get that $\partial^k_{x_1}w\equiv 0$ in $\overline{\R^{n}_+}$ and so $w$ is of the form
  $$
  w(z)=\sum_{i=0}^{k-1}a_i(x_2,\dots,x_{n-1},y) x_1^i;
  $$
  that is, a polynomial in the variable $x_1$ of degree less or equal than $k-1$ with coefficients depending only on the variables $(x_2,\dots,x_{n-1},y)$. Finally, by repeating the same argument along the directions $x_i$, with $i=2,\dots,n-1$, we get the result.
\end{proof}
Finally, we can prove the Liouville type result in Theorem \ref{liouvilletheoremzero}.
\begin{proof}[Proof of Theorem \ref{liouvilletheoremzero}]
First, let us remark that the only solution $w$ to \eqref{evenrho0} depending only on the variable $y$ is constant.

 Let now $r\geq 1$. Then, the growth condition in \eqref{liouv.condition} gives
  $$
  \int_{B^+_r} \rho w^2 \leq C r^{n+1}(1+r^a)(1+r)^{2\gamma}\leq C r^{n+1+2\gamma +a^+},
  $$
for every $r\geq 1$. Thus, by Lemma \ref{lemma3} we get that $w$ is a polynomial in the variable $x=(x_1,\cdots,x_{n-1})\in \R^{n-1}$, with coefficients depending only on the variable $y$ and ultimately, by \eqref{liouv.condition}, we deduce that it is of degree at most $\gamma$.\\
Then, if $\gamma\in [0,2)$ we get that $w$ is of degree at most $1$ with respect to the variables $x_1,\dots,x_{n-1}$, and so of the form
$$
w(z)=a_{n}(y) + \sum_{i=1}^{n-1} a_i(y) x_i.
$$
On one hand, since $\partial_{x_i}w(z)=a_i(y) $ is a solution to \eqref{evenrho0}, we get that $a_i(y)\equiv a_i \in \R$ is constant. Thus, by substituting the whole function in \eqref{evenrho0} we get
$$
 \mathrm{div}\left(\rho\nabla w\right)= \mathrm{div}\left(\rho\nabla a_{n}(y)\right),\qquad
 \lim_{y\to0^+}\rho \,\partial_yw=\lim_{y\to0^+}\rho \,\partial_y a_{n}(y)
$$
from which it follows that necessary $a_{n}(y)\equiv a_{n}$ is a constant too. Finally, the result follows by taking $t=a_{n}$ and $b=(a_1,\dots,a_{n-1})$. Ultimately, if $\gamma \in [0,1)$ we immediately deduce that $w(z)\equiv t$.
\end{proof}

\bigskip

{\bf Data availability statement:} this manuscript has no associated data.

\bigskip

\end{document}